\let\oldequation\equation
\let\oldendequation\endequation
\renewenvironment{equation}{\linenomathNonumbers\oldequation}{\oldendequation\endlinenomath}
\let\expandafter\oldequationstar\csname equation*\endcsname
\let\expandafter\oldendequationstar\csname endequation*\endcsname
\renewenvironment{equation*}{\linenomathNonumbers\oldequationstar}{\oldendequationstar\endlinenomath}
\let\oldalign\align
\let\oldendalign\endalign
\let\expandafter\oldalignstar\csname align*\endcsname
\let\expandafter\oldendalignstar\csname endalign*\endcsname
\renewenvironment{align*}{\linenomathNonumbers\oldalignstar}{\oldendalignstar\endlinenomath}
\newcommand{\xra}{\xrightarrow}
\newcommand{\dcat}[2][]{\cat{D}_{#1}(#2)}
\newcommand{\kinj}[1]{\cat{K}(\cat{Inj}\,#1)}
\newcommand{\perf}[1]{\mathsf{Perf}\,#1}
\newcommand{\coh}{\operatorname{coh}}
\newcommand{\dbcat}[1]{\cat{D}^{\mathsf{b}}(#1)}
\newcommand{\mcS}{\mathcal{S}}
\newcommand{\mco}{\mathcal{O}}
\newcommand{\fm}{\mathfrak{m}}
\newcommand{\fp}{\mathfrak{p}}
\newcommand{\sfc}{\mathsf{c}}
\newcommand{\bfi}{\boldsymbol{i}}
\newcommand{\bfq}{\boldsymbol{q}}
\newcommand{\vp}{{\varphi}}
\newcommand{\ann}{\operatorname{ann}}
\newcommand{\cat}[1]{{\mathsf{#1}}}
\newcommand{\codepth}{\operatorname{codepth}}
\newcommand{\cx}{\operatorname{cx}}
\newcommand{\depth}{\operatorname{depth}}
\newcommand{\edim}{\operatorname{edim}}
\newcommand{\Ext}{\operatorname{Ext}}
\newcommand{\flevel}{F\!\operatorname{-level}}
\newcommand{\level}{\mathsf{level}}
\newcommand{\hh}{\operatorname{H}}
\newcommand{\SheafHom}{\mathcal{H}\kern -1pt o\kern -1 pt m\kern -1pt}
\newcommand{\Hom}{\operatorname{Hom}}
\newcommand{\Ker}{\operatorname{Ker}}
\newcommand{\Loc}{\operatorname{Loc}}
\newcommand{\lotimes}{\otimes^{\operatorname{L}}}
\newcommand{\rmod}{\operatorname{mod}}
\newcommand{\one}{\mathbf{1}}
\newcommand{\pdim}{\operatorname{proj\,dim}}
\newcommand{\rank}{\operatorname{rank}}
\newcommand{\res}[2]{{#1}(#2)} 
\newcommand{\shift}{{\sf\Sigma}}
\newcommand{\spec}{\operatorname{Spec}}
\newcommand{\supp}{\operatorname{supp}}
\newcommand{\thick}{\operatorname{thick}}
\newcommand{\rmV}{\operatorname{V}}
\newcounter{intro}
\newtheorem{introthm}[intro]{Theorem}
\newtheorem{introcor}[intro]{Corollary}
\newtheorem{theorem}[subsection]{Theorem}
\newtheorem{proposition}[subsection]{Proposition}
\newtheorem{lemma}[subsection]{Lemma}
\newtheorem{corollary}[subsection]{Corollary}
\theoremstyle{definition}
\newtheorem{example}[subsection]{Example}
\newtheorem{definition}[subsection]{Definition}
\newtheorem{chunk}[subsection]{}
\theoremstyle{remark}
\newtheorem*{ack}{Acknowledgements}
\newtheorem{question}[subsection]{Question}
\numberwithin{equation}{subsection}
\title[Frobenius generators]{High Frobenius pushforwards generate the\\ bounded derived category}
\author[M.R.~Ballard]{Matthew~R.~Ballard}
\address{M.R.~Ballard, 
Department of Mathematics,
University of South Carolina, 
Columbia, SC 29208,
U.S.A.}
\email{mrb@mattrobball.com}
\author[S.B.~Iyengar]{Srikanth~B.~Iyengar}
\address{S.B.~Iyengar,
Department of Mathematics,
University of Utah,
Salt Lake City, UT 84112,
U.S.A.}
\email{srikanth.b.iyengar@utah.edu}
\author[P.~Lank]{Pat Lank}
\address{P.~Lank,
Dipartimento di Matematica ``F. Enriques", Universit\`{a} degli Studi
di Milano, Via Cesare
Saldini 50, 20133 Milano, Italy}
\email{plankmathematics@gmail.com}
\author[A.~Mukhopadhyay]{Alapan Mukhopadhyay}
\address{A.~Mukhopadhyay,
Institute of Mathematics, CAG, EPFL SB MATH MA A2 383 (Bâtiment MA), Station 8,
CH-1015 Lausanne, Switzerland}
\email{alapan.mathematics@gmail.com}
\author[J.~Pollitz]{Josh Pollitz}
\address{J.~Pollitz,
Mathematics Department, 
Syracuse University, 
Syracuse, NY 13244 U.S.A.}
\email{jhpollit@syr.edu}
\date{\today}
\keywords{derived category, Frobenius pushforward, generator, homotopy category of injectives, local-to-global principle, strong generator, thick subcategory}
\subjclass[2020]{14A30 (primary), 13A35, 14G17, 13D09} 
\begin{document}

\begin{abstract}
This work concerns generators for the bounded derived category of coherent sheaves over a noetherian scheme $X$ of prime characteristic. The main result is that when the Frobenius map on $X$ is finite, for any compact generator $G$ of $\mathsf{D}(X)$ the Frobenius pushforward $F ^e_*G$ generates the bounded derived category whenever $p^e$ is larger than the codepth of $X$, an invariant that is a measure of the singularity of $X$. The conclusion holds for all positive integers $e$ when $X$ is locally complete intersection. The question of when one can take $G=\mathcal{O}_X$ is also investigated. For smooth projective complete intersections it reduces to a question of generation of the Kuznetsov component.
\end{abstract}

\maketitle
\setcounter{tocdepth}{1}

\section*{Introduction}
\label{se:intro}

This work concerns the existence of (strong) generators in the bounded derived category of a noetherian scheme and, in particular, a commutative noetherian ring. The notion of a strong generator for an (essentially small) triangulated category $\cat T$ was introduced by Bondal and Van den Bergh~\cite{Bondal/VanDenBergh:2003}. Roughly speaking, an object $G$ in $\cat T$ is a generator if each object $A$ can be built from $G$ using the operations in $\cat T$: finite direct sums, summands, and mapping cones. When there is an an upper bound, independent of $A$, on the number of mapping cones required, $G$ is said to be a strong generator; see \cref{ch:thick}. 

Bondal and Van den Bergh~\cite{Bondal/VanDenBergh:2003} proved the existence of generators of derived categories of perfect complexes on quasi-separated and quasi-compact schemes. These are then generators for the bounded derived category of coherent sheaves when the scheme is smooth over a field. Rouquier~\cite{Rouquier:2008} subsequently showed that the bounded 
derived category can have strong generators even in the singular 
case. Further work of Keller and Van den Bergh~\cite{Keller/VanDenBergh:2009}, Lunts~\cite{Lunts:2010}, Iyengar and Takahashi~\cite{Iyengar/Takahashi:2016},  Neeman~\cite{Neeman:2021}, and Aoki~\cite{Aoki:2021} focused on the 
existence of strong generators in growing generality. 

While this settles the question of the existence of a generator 
for the bounded derived category of coherent sheaves quite broadly, there are few general results that identify them. Our motivation here is to identify, as canonically and explicitly as possible, generators. These 
will then immediately be strong whenever \cite{Aoki:2021} applies. We now
survey the current literature. 

We focus first on the affine situation: $X\colonequals \spec R$, where $R$ is a commutative noetherian ring. In this case, $\dbcat{\coh X}$ is equivalent to $\dbcat{\rmod R}$, the bounded derived category of finitely generated $R$-modules. When $R$ is regular and of finite Krull dimension, $R$ itself is a strong generator for $\dbcat{\rmod R}$. In fact, any $R$-complex in $\dbcat{\rmod R}$ having full support is a strong generator. The latter is a consequence of a theorem of Hopkins~\cite{Hopkins:1987}, and Neeman~\cite{Neeman:1992b}; see \cref{ch:perfect}. One can also identify strong generators when $R$ is a complete intersection ring with isolated singularities using a theorem of Stevenson~\cite{Stevenson:2014a}. When $R$ is an artinian ring, $R/J$, where $J$ is the Jacobson radical of $R$, is a strong generator for $\dbcat{\rmod R}$. 

The situation is even more complicated in the global case. Perhaps the most comprehensive result is due to Orlov~\cite{Orlov:2009} who proved that when $X$ is quasi-projective, with very ample line bundle $L$, the direct sum $\mco_X \oplus L\oplus \cdots \oplus L^{\otimes d}$, where $d=\dim X$, is a generator for $\perf{X}$, the subcategory of perfect complexes. This gives strong generators for $\dbcat{\coh X}$ when $X$ is regular. Outside this case, there are only a few results that identify explicit generators for the bounded derived category; see, for instance, the work of \cite{Ohkawa/Uehara:2013}.

In light of these remarks, it is surprising that for any $F$-finite scheme of prime characteristic one can identify strong generators for the bounded derived category, at least in terms of generators for $\perf X$. This is the content of the result below. The existence of a generator $E$, as below, for $\perf X$ is a result of Bondal and Van den Bergh~\cite[Theorem~3.1.1]{Bondal/VanDenBergh:2003}.  We emphasize that $E$ need not be a strong generator for $\perf X$. Indeed, the latter has no strong generators when $X$ is singular; see \cite{Rouquier:2008,Oppermann/Stovicek:2012}.
As usual $F^e_*G$ denotes the pushforward of a complex $G$ in $\dbcat{\coh X}$ along $F^e$, the $e$-fold composition of the Frobenius map on $X$. 

\begin{introthm}
\label{ith:general-schemes}
Let $X$ be a noetherian $F$-finite scheme of prime characteristic $p$, and $E$ a generator for $\perf X$. For any $G$ in $\dbcat{\coh X}$ with $\supp_X G=X$, the complex $F^e_*(E\lotimes_X G)$ is a generator for $\dbcat{\coh X}$ for any natural number $e> \log_p(\codepth X)$; it is a strong generator when $X$ is separated.
\end{introthm}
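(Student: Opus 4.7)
The plan is to reduce the global statement to a local one via a local-to-global principle for thick subcategories, and then exploit a Frobenius--Koszul computation that extracts the quantitative bound $p^e > \codepth X$. Generation of $\dbcat{\coh X}$ can be tested stalk-wise, and since $F^e$ is affine---indeed finite under the $F$-finite hypothesis---pushforward commutes with localization, reducing the problem to the following local claim: for every local $F$-finite ring $(R,\fm,k)$ and every $N \in \dbcat{\rmod R}$ of full support, $F^e_* N$ generates $\dbcat{\rmod R}$ whenever $p^e > \codepth R$. The derived tensor product $E \lotimes_X G$ has full support on $X$, since $E$ generates $\perf X$ and $\supp_X G = X$, and this property is inherited by stalks and preserved under $F^e_*$.

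For the local claim, the Hopkins--Neeman theorem invoked in the excerpt ensures that any complex of full support generates $\dbcat{\rmod R}$ as a thick subcategory, so the real content is the quantitative cone count needed for the strong-generator assertion. The key point is that Frobenius pushforward implements raising-to-$p^e$-th powers, so $F^e_* R$ encodes information about the system $(x_1^{p^e},\dots,x_n^{p^e})$ for any minimal generating set $x_1,\dots,x_n$ of $\fm$. When $p^e > \codepth R = \edim R - \depth R$, Koszul--level techniques comparing such $p^e$-th-power systems with a maximal $R$-regular sequence inside $\fm$ place the residue field $k$---and hence all of $\dbcat{\rmod R}$---inside the thick closure of $F^e_* R$ within a number of cones bounded uniformly by $\edim R$ and $\depth R$. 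Tensoring with a complex $N$ of full support transfers this count to $F^e_* N$.

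For strong generation when $X$ is separated, the local bound obtained above is uniform over a finite affine open cover of $X$; separatedness makes pairwise intersections affine and permits a \v{C}ech-type patching of the local constructions into a global bound on the level of $F^e_*(E \lotimes_X G)$ against any strong generator of $\dbcat{\coh X}$ guaranteed by Aoki's theorem. The principal obstacle is the middle paragraph: establishing the precise Frobenius--Koszul level identity that converts the arithmetic condition $p^e > \codepth R$ into a finite cone count. This is presumably the content of an auxiliary lemma proved earlier in the paper, and the rest of the argument---local-to-global reduction, support preservation, and \v{C}ech patching---is essentially formal once that local input is in hand.
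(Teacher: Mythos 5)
Your proposal correctly identifies the overall template (local-to-global reduction, a Frobenius--Koszul input, and an appeal to existence of strong generators), but there are two genuine errors that would derail it.

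First, the invocation of Hopkins--Neeman is misapplied. You write that the Hopkins--Neeman theorem "ensures that any complex of full support generates $\dbcat{\rmod R}$ as a thick subcategory, so the real content is the quantitative cone count needed for the strong-generator assertion." This is false: Hopkins--Neeman and Thomason classify thick subcategories of $\perf R$, not of $\dbcat{\rmod R}$. When $R$ is singular, $R$ itself has full support yet $\thick_{\dcat R}(R) = \perf R \subsetneq \dbcat{\rmod R}$, so full support is nowhere near sufficient for generating the bounded derived category. In fact the qualitative generation statement is where all the depth lies --- not a cone count. The mechanism the paper uses is a \emph{nilpotence} result (\cref{th:nilpotence}): when $p^e > \codepth R$, the Frobenius pushforward $F^e_*(K^M)$ is \emph{formal} (isomorphic to its homology in $\dcat R$), because the $e$-fold Frobenius on an appropriate simplicial model for $K^R$ factors through the residue field. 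Formality forces $k$ to be a direct summand of $F^e_*(K^M)$, and then $k$ generates all finite-length complexes, which is exactly what the local-to-global criterion of \cref{th:local-global} asks you to verify at each stalk (namely that $\res{E}{x} \in \thick((F^e_*G)_x)$). Your ``Frobenius--Koszul level identity'' gestures in the right direction, but the paper's argument does not compare $p^e$-th power systems against a maximal regular sequence, and it is not a level bound; it is a formality statement obtained via simplicial algebra and Quillen's connectivity theorem for powers of the augmentation ideal.

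Second, your proposed proof of strong generation via ``a \v{C}ech-type patching of the local constructions into a global bound on the level'' is not what the paper does and is unlikely to go through as stated. Patching level bounds along a cover is delicate in triangulated categories, and you offer no mechanism for it. The paper's route is much softer: since $X$ is $F$-finite it is excellent of finite Krull dimension, so if $X$ is also separated Aoki's theorem guarantees that $\dbcat{\coh X}$ has \emph{some} strong generator; and in any triangulated category admitting a strong generator, every generator is automatically a strong generator (\cref{ch:thick}). The quantitative local information is thus not used at all for strong generation --- it is only used for generation. A final, smaller point: the local-to-global reduction for $\dbcat{\coh X}$ (as opposed to $\dcat X$) is itself nontrivial and in the paper is routed through the homotopy category of injectives and Stevenson's local-to-global principle, together with \cref{le:thick-Frobenius} (projection formula) to convert generation as a $\perf X$-module into ordinary generation by $F^e_*(E \lotimes_X G)$; this bookkeeping is glossed over in your first paragraph.
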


This result is contained in \cref{co:strong-generation}. The invariant $\codepth X$ is defined for any noetherian scheme $X$, in terms of the local rings at $x\in X$; see \cref{se:generation}. When $X\colonequals \spec R$, the codepth of $X$ is bounded above by the number of generators of the $R$-module $F _*R$; see \cref{le:codepth}, which also provides a better bound. In particular, the codepth is finite for any $F$-finite noetherian scheme. The codepth of $X$ is equal to $0$ if and only if $X$ is regular, and so it can be viewed as a numerical measure of the singularity of $X$.

Two special cases are worth noting. When $X$ is affine, the structure sheaf $\mco_X$ of $X$ generates $\perf X$, so the preceding result specializes to:

\begin{introcor}
 \label{icor:affine}
When $R$ is an $F$-finite commutative noetherian ring of prime characteristic $p$, the $R$-module $F^e_*R$ is a strong generator for $\dbcat{\rmod R}$ for any natural number $e>\log_p(\codepth R)$.
\end{introcor}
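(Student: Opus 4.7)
The plan is to deduce the corollary directly from \cref{ith:general-schemes} by specializing to the affine scheme $X \colonequals \spec R$, which is separated so that the ``strong generator'' conclusion of that theorem is in force.

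First, take $E = R$: over the noetherian ring $R$ every perfect complex is quasi-isomorphic to a bounded complex of finitely generated projective $R$-modules, and each such module is a direct summand of a finite free module, so $R$ by itself generates $\perf X$ as a thick subcategory. Next, take $G = R$, which lies in $\dbcat{\rmod R} \simeq \dbcat{\coh X}$ and trivially satisfies $\supp_X G = \spec R$. With these choices,
\[
E \lotimes_X G \;\simeq\; R, \qquad F^e_*(E \lotimes_X G) \;=\; F^e_* R.
\]

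Since the scheme-theoretic invariant $\codepth X$ is defined via the local rings of $X$, it coincides with the usual ring-theoretic codepth of $R$; hence the numerical hypothesis $e > \log_p(\codepth R)$ is exactly the hypothesis of \cref{ith:general-schemes}. Applying that theorem yields at once that $F^e_* R$ is a strong generator of $\dbcat{\rmod R}$.

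This is a direct specialization rather than an independent argument, so I do not anticipate any substantive obstacle. The only items to double-check are that $\perf X$, $\lotimes_X$, $F^e_*$, $\supp_X$, and $\codepth X$ reduce, on $X = \spec R$, to their standard module-theoretic counterparts; these verifications are expected to be routine once the definitions in \cref{se:generation} are in hand.
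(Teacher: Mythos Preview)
Your proposal is correct and is exactly the paper's own argument: the corollary is obtained by specializing \cref{ith:general-schemes} to $X=\spec R$ with $E=G=\mco_X$, using that $\mco_X$ generates $\perf X$ in the affine case and that affine schemes are separated. The identification of $\codepth X$ with $\codepth R$ is handled in \cref{se:generation}.
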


In fact, one can take the Frobenius pushforward of any $R$-complex with coherent cohomology and full support; see \cref{co:strong-generation-affine}. Here is another special case of \cref{ith:general-schemes}; it reappears as \cref{co:quasi-proj}.

\begin{introcor}
\label{icor:quasi-projective}
When $X$ is a quasi-projective scheme over an $F$-finite field of prime characteristic $p$, and $L$ a very ample line bundle, $F^e_* \mco_X \oplus F^e_* L \oplus \cdots \oplus F^e_* L^{\otimes \dim X}$ is a strong generator of $\dbcat{\coh X}$, for any natural number $e> \log_p(\codepth X)$.
\end{introcor}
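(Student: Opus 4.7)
The plan is to derive this corollary by specializing \cref{ith:general-schemes} to an explicit generator of $\perf X$ produced by Orlov's theorem, and to the trivial choice $G = \mco_X$.

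First I would verify the standing hypotheses of \cref{ith:general-schemes} in this setting. Because $X$ is quasi-projective over a field it is separated and of finite type; the assumption that the base field is $F$-finite then ensures that the Frobenius on $X$ is a finite morphism, so $X$ itself is $F$-finite. The invariant $\codepth X$ is therefore finite, and the strong generation clause of \cref{ith:general-schemes} is available.

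Next I would apply Orlov's theorem~\cite{Orlov:2009}, cited in the introduction, which produces the generator
\[
E \colonequals \mco_X \oplus L \oplus L^{\otimes 2} \oplus \cdots \oplus L^{\otimes \dim X}
\]
of $\perf X$. Taking $G \colonequals \mco_X$, one has $\supp_X G = X$ and $E \lotimes_X G \cong E$, so \cref{ith:general-schemes} immediately yields that $F^e_* E$ is a strong generator of $\dbcat{\coh X}$ for every natural number $e > \log_p(\codepth X)$. Since $F^e_*$ is additive,
\[
F^e_* E \;\cong\; F^e_* \mco_X \oplus F^e_* L \oplus \cdots \oplus F^e_* L^{\otimes \dim X},
\]
which is the asserted strong generator.

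There is no substantive obstacle here: the work is entirely in \cref{ith:general-schemes} and in Orlov's identification of an explicit generator for $\perf X$. The only minor point worth spelling out in the write-up is the justification that $X$ is $F$-finite and separated, so that the strong generation conclusion (not merely generation) of \cref{ith:general-schemes} is in force.
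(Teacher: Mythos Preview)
Your proposal is correct and matches the paper's own argument essentially verbatim: the paper proves this as \cref{co:quasi-proj} by noting that Orlov's bundle $\bigoplus_{i=0}^{\dim X} L^{\otimes i}$ generates $\perf X$ and then invoking \cref{co:strong-generation} (the body-text version of \cref{ith:general-schemes}) with $G=\mco_X$. Your remarks about checking $F$-finiteness and separatedness are appropriate and are left implicit in the paper.
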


It is not hard to deduce from these results that if for some $G$ in $\dbcat{\coh X}$ with full support and integer $n\ge 1$, the complex $F^n_*G$ is perfect, then $X$ is regular; this recovers \cite[Theorem~1.1]{Avramov/Hochster/Iyengar/Yao:2012}. Thus one can view \cref{ith:general-schemes} as a structural refinement of Kunz's theorem~\cite{Kunz:1969} that is the case $G=\mco_X$. One can also deduce other results of this ilk, characterizing the singularity type of $X$ in terms of homological properties of $F$; such as those from \cite{Avramov/Hochster/Iyengar/Yao:2012,Herzog:1974,Rodicio:1988,Takahashi/Yoshino:2004, Koh/Lee:1998} from our results.

There are two key ingredients that go into the proof of \cref{ith:general-schemes}. One is a local-to-global principle that says, roughly, that a complex $G$ is a generator for $\dbcat{\coh X}$ if the stalk complex $G_x$ is a generator for $\dbcat{\rmod \mco_x}$ for each $x\in X$; see \cref{co:local-global}. A subtle point here is that generation involves the structure of $\dbcat{\coh X}$ as module over $\perf X$ viewed as a tensor-triangulated category. We deduce this local-to-global principle from a more general statement, involving the ind-completions of the categories. This builds on work of Stevenson~\cite{Stevenson:2013a}, and is explained in \cref{se:local-global}.

Once we are down to the level of stalks, the main result concerns a nilpotence property of the Frobenius endomorphism on a local ring of prime characteristic. It implies that, under appropriate conditions involving support, high Frobenius pushforwards of any complex generates the finite length complexes in $\dbcat{\rmod \mco_x}$. This is the content of \cref{se:nilpotence}.

\cref{ith:general-schemes} prompts a number of questions. One is what can be said, vis a vis generation, when $F $ is not necessarily finite; see \cref{th:kinj-generation} for what we have to offer in this regard. Another question, prompted by Kunz's theorem, is whether already the first Frobenius pushforward of some object generates the bounded derived category. Here is the most decisive result we prove concerning this question. 

\begin{introthm}
 \label{ith:completeintersection}
When $X$ is $F$-finite and locally complete intersection, for any generator $G$ of $\perf X$, the complex $F_*G$ is a strong generator for $\dbcat{\coh X}$. In particular, if $X$ is affine, $F_*\mco_X$ strongly generates $\dbcat{\coh X}$.
\end{introthm}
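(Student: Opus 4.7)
The plan is to combine the local-to-global principle with a CI-specific nilpotence statement that sharpens the bound $p^e > \codepth R$ from \cref{ith:general-schemes}, allowing $e=1$.

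First I would apply \cref{co:local-global} to reduce to a pointwise statement: for each $x \in X$, the stalk $(F_*G)_x$ generates $\dbcat{\rmod R}$ where $R \colonequals \mco_{X,x}$, with a uniform bound on the number of cones. Since $G$ generates $\perf X$, its stalk $G_x$ generates $\perf R$, so $R \in \thick(G_x)$; because $F_*$ is exact and preserves direct summands, $F_*R \in \thick((F_*G)_x)$. The problem reduces to showing that for any local complete intersection ring $(R,\fm,k)$ of prime characteristic $p$, the module $F_*R$ generates $\dbcat{\rmod R}$, with a bound depending only on numerical invariants of $R$.

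The core of the argument is this local generation statement. Modulo perfect complexes, the thick subcategories of the singularity category $\dbcat{\rmod R}/\perf R$ are classified by support subvarieties of $\operatorname{Proj} k[\chi_1,\dots,\chi_c]$ via Stevenson's theorem, where the $\chi_i$ are the degree-$2$ cohomology operators associated to the CI ring. Frobenius acts on these operators by raising to $p$-th powers---a finite surjection on the projective space---so the image of $F_*R$ in the singularity category has full support and therefore generates it. Combined with the complementary fact that $R \in \thick(F_*R)$ inside $\dbcat{\rmod R}$, which I would expect to establish from the Koszul-type structure of CI singularities, this identifies $\thick(F_*R)$ with all of $\dbcat{\rmod R}$, replacing the iteration used in \cref{se:nilpotence}. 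Global strong generation then follows by noetherian induction on $X$, as in the proof of \cref{ith:general-schemes}, using openness of the regular locus to patch the uniform local bounds.

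The hard part will be the CI-specific local statement, and in particular verifying $R \in \thick(F_*R)$: unlike the singularity-category side, this cannot be deduced from support varieties alone and really requires that $R$ is a complete intersection (rather than only of finite CI-dimension), to exclude the possibility that $F_*R$ lies in some thick subcategory of $\dbcat{\rmod R}$ not containing $R$.
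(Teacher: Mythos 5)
Your overall architecture matches the paper's: reduce via the local-to-global principle (\cref{co:local-global}) to a statement about local complete intersection rings, and then invoke cohomological support variety technology for CI rings. But your local argument has two weak points that the paper's route avoids entirely.

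First, the phrase ``Frobenius acts on these operators by raising to $p$-th powers---a finite surjection on the projective space---so the image of $F_*R$ in the singularity category has full support'' is not a precise statement, and it is not how one actually computes the support variety of a Frobenius pushforward. The input the paper uses is the complexity computation of Avramov--Iyengar--Miller (cited as \cite[Theorem~12.2.4]{Avramov/Iyengar/Miller:2006}): for a local CI ring $R_\fp$ of codimension $c$ and any $M$ with $\hh(M)\ne 0$, one has $\cx_{R_\fp}(F_*(M_\fp)) = \cx_{R_\fp}(k(\fp)) = c$. Since the dimension of the support variety equals the complexity, this says $\rmV_{R_\fp}(F_*(M_\fp))$ is the \emph{entire} cone $\spec^*\mcS$, with no need to describe how Frobenius interacts with the operators $\chi_1,\dots,\chi_c$. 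Your heuristic may point at the same phenomenon, but as written it is not a proof.

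Second, and more seriously, you correctly identify $R\in\thick(F_*R)$ as a gap in the singularity-category route and then leave it open. This step does not follow from Stevenson's classification of thick subcategories of $\dbcat{\rmod R}/\perf R$, which says nothing about whether a given object of $\dbcat{\rmod R}$ builds a perfect complex. The paper circumvents the issue by using the Liu--Pollitz theorem (\cite[Theorem~3.1]{Liu/Pollitz:2021}), recalled in \cref{ch:supportvarieties}, which classifies thick subcategories of $\dbcat{\rmod R}$ \emph{itself} (not the quotient) by the family of cohomological support varieties at all primes: $N\in\thick_{\dcat R}(M)$ if and only if $\rmV_{R_\fp}(N_\fp)\subseteq\rmV_{R_\fp}(M_\fp)$ for every $\fp$. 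Combined with the full-support fact above, this yields in one stroke that \emph{every} $N\in\dbcat{\rmod R}$, including $R$ itself, lies in $\thick_{\dcat R}(F_*M)$. The separate argument you anticipate needing never materializes. Finally, strong generation is not obtained by noetherian induction with uniform cone bounds as you suggest; it follows because Aoki's theorem guarantees $\dbcat{\coh X}$ has a strong generator (when $X$ is separated), so any generator is automatically strong.
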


See \cref{th:completeintersection}, and also \cref{th:completeintersection-2} for a more general statement. We also prove that Veronese subrings of $k[x,y]$ have this property. The results for affine varieties raises the question: Does $F_{*}R$ generate $\dbcat{\rmod R}$ for any commutative noetherian ring $R$? We do not know of any counterexamples. 

In light of these results it is natural to ask for the smallest number $e$ such that $F^e_*\mco_X$ strongly generates $\dbcat{\coh X}$. This is interesting even when $X=\spec R$, for examples suggest that the upper bound for $e$ given by \cref{ith:general-schemes} is far from optimal. Here are some results in this direction; see \cref{se:F-thickness,se:firstpushforward}.
\begin{itemize}
\item
$e\le \lceil\log_p(\mathrm{Loewy\,length}\, R)\rceil$ when $R$ is an artinian local ring; 
\item
$e\le \lceil \log_p(n+1)\rceil$ when $X=\mathbb{P}^n$;
\item
$e\le \lceil \log_p3\rceil$ with $X$ the blowup of $\mathbb{P}^2$ at $\le 4$ points in general position;
\item
$e\le \lceil\log_pn\rceil$ for some smooth quadrics $X$ of dimension $n$.
\end{itemize}
 There may be no such $e$ when, for instance, $X$ is an $F$-finite smooth curve of positive genus; see \cref{th:f_thick_curves}. This prompts a definition: $X$ is \emph{$F$-thick} if $F^e_*\mco_X$ itself generates $\dbcat{\coh X}$ for some $e\ge 1$. The class of $F$-thick schemes includes all affine schemes and overlaps significantly with varieties possessing a full exceptional collection. For smooth projective complete intersections $F$-thickness is tantamount to generation of the Kuznetsov component; see \cref{th:f_split_kuz_comp}.

Shifting the focus from the full derived category to specific objects, recent and not so recent developments suggest considering the number of steps required to build $\mco_X$ from $F^e_*\mco_X$, for some $e$. We are interested in this number partly because in the affine case it is one measure of a failure of the $F$-split property.

\begin{introcor}
 \label{icor:fLevelFSplit}
For any $F$-finite noetherian ring $R$ one has that
\[
\inf\{n\mid \text{$R$ is in $\thick^n(F^e_*R)$ for some $e\ge 1$}\}
\]
is finite; it equals one if and only if $R$ is $F$-split. It is bounded above by $p^c$ when $R$ is locally complete intersection and $c$ is the codimension of $R$.
\end{introcor}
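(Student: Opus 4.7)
The plan is to address the three assertions in turn. For the finiteness of the infimum, \cref{icor:affine} applied to $R$ yields that whenever $p^e > \codepth R$---a finite integer because $R$ is $F$-finite---the module $F^e_*R$ is a strong generator for $\dbcat{\rmod R}$. In particular $R$ itself lies in $\thick^n(F^e_*R)$ for some $n$, so the infimum is finite.

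For the $F$-split characterization, I would unpack what $R \in \thick^1(F^e_*R)$ means: $R$ is a direct summand of a finite direct sum of shifts of $F^e_*R$ in $\dbcat{\rmod R}$. Since both $R$ and $F^e_*R$ are concentrated in cohomological degree zero, only the unshifted copies can contribute, so the condition becomes that $R$ is a summand of $(F^e_*R)^{\oplus m}$ in $\rmod R$ for some $m$. Localizing at a maximal ideal, the retraction decomposes into coordinates whose sum is $1$, forcing some coordinate to be a unit; this upgrades to a retraction of the single map $R \to F^e_*R$, which is the $F^e$-split condition. Finally, $F^e$-splitting for some $e \geq 1$ is equivalent to $F$-splitting: the forward direction composes the splittings of $R \to F_*R \to F^2_*R \to \cdots \to F^e_*R$, while the reverse composes a retraction $\rho : F^e_*R \to R$ with the canonical map $F_*R \to F^e_*R$ to obtain a retraction of $R \to F_*R$.

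For the LCI bound, \cref{ith:completeintersection} provides that $F_*R$ already strongly generates $\dbcat{\rmod R}$ when $R$ is locally complete intersection, so the task reduces to bounding the level of $R$ inside $\thick(F_*R)$ by $p^c$. I would work locally, fixing a presentation $R = S/(f_1,\dots,f_c)$ with $S$ regular local and $f_1,\dots,f_c$ a regular sequence in the maximal ideal $\fm$ of $S$. The main tool is the Frobenius-twisted filtration on $F_*R$: under the action $r \cdot x = r^p x$ the ideal $\fm$ of $R$ acts through its $p$-th Frobenius power $\fm^{[p]}$, producing a natural filtration of $F_*R$ whose successive quotients are free $R$-modules and whose total $R$-length is $p^c$. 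I would then convert this filtration into a tower of $R$-linear triangles in $\thick(F_*R)$ of height $p^c$ terminating in $R$, proceeding inductively on $c$: the base case $c = 0$ is $R$ regular, hence $F$-split (level $1 = p^0$), and the inductive step peels off one hypersurface relation, multiplying the step count by $p$. The main obstacle will be ensuring that each cone in the tower stays in $\thick(F_*R)$ as an $R$-linear triangle rather than merely an $S$-linear one, which requires a careful compatibility check between the Frobenius twist on $R$ and the regular ambient structure of $S$.
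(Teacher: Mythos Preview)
Your treatment of the first two assertions is essentially the paper's: finiteness from \cref{icor:affine}, and the $F$-split characterization from unpacking $\thick^1$. (One small wrinkle: passing from ``$R$ is a summand of $(F^e_*R)^{\oplus m}$'' to ``the Frobenius map itself splits'' needs the observation that the image of the trace map $\Hom_R(F^e_*R,R)\to R$, $\phi\mapsto\phi(1)$, coincides with the image of the full evaluation pairing, so surjectivity of the latter forces some $\phi$ with $\phi(1)=1$. Your localization argument only produces local splittings whose components need not be the Frobenius.)

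The genuine gap is in the LCI bound. Your proposed filtration runs in the wrong direction: a filtration of $F_*R$ with \emph{free} $R$-module subquotients would show $F_*R\in\thick^{p^c}(R)$, not $R\in\thick^{p^c}(F_*R)$. Worse, such a filtration would force $\pdim_R F_*R<\infty$, which by Kunz's theorem happens only when $R$ is regular. So the object you are filtering and the nature of the subquotients must be swapped. The paper's argument (\cref{th:flevel_lci}) does exactly that: factor the Frobenius on $R=S/(f_1,\dots,f_c)$ through $R'=S/(f_1^p,\dots,f_c^p)$ as $R\to R'\to R$, with the first map the base change of $F_S$. Since $S$ is regular and $F$-finite, $F_{S,*}S$ is free over $S$, so $R'$ is \emph{free over $R$}; hence $R$ is a summand of $R'$. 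A filtration of $R'$ by the powers of $(f_1,\dots,f_c)R'$ (Avramov--Miller) has $p^c$ subquotients, each isomorphic to $R$ as an $R'$-module and hence to $F_*R$ as an $R$-module. Thus $R'\in\thick^{p^c}_{\dcat R}(F_*R)$, and the bound follows. You should also justify the reduction to the local case and to an honest quotient of a regular ring; the paper invokes Letz's local-to-global result for levels and a completion argument, which your sketch omits.
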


The finiteness is immediate from \cref{icor:affine}, and the second part is essentially the definition of the $F$-split property. These results are proved in \cref{se:new_invariants}. Another reason for our interest in \cref{icor:fLevelFSplit}  is that it says, in the language of \cite{Dwyer/Greenlees/Iyengar:2006b}, that the $R$-module $F^e_*R$ is proxy-small for $e\gg 0$. Among other things, this has consequences for the derived endomorphism ring of $F^e_*R$; see \emph{op.\@ cit.\@} and also \cite{Dwyer/Greenlees/Iyengar:2006a}.

\begin{ack}
Iyengar thanks Greg Stevenson for conversations on \cref{se:local-global}. Mukhopadhyay thanks the University of Utah mathematics department for hospitality during formative stages of the work. The authors thank  Devlin Mallory and Eamon Quinlan-Gallego for comments on the material in  \cref{se:F-thickness} and \cref{se:new_invariants}, respectively, and Roman Bezrukavnikov for pointing us to \cite{Bezrukavnikov/Mirkovic/Rumynin:2008}. Finally, we are grateful to the anonymous referees for their helpful comments and suggestions.  
\end{ack}

\section{Generators for triangulated categories}
\label{se:local-global}

In this section we recall some basic notions and results concerning triangulated categories. The main examples of interest are various categories constructed from (quasi-)coherent sheaves on a noetherian scheme. We take Krause's book~\cite{Krause:2022} as our standard reference on triangulated categories; for constructions and results specific to schemes see Huybrecht's book~\cite{Huybrechts:2006} and the Stacks Project~\cite{StacksProject}.

\begin{chunk}
\label{ch:thick}
Let $\cat K$ be a triangulated category. A subcategory $\cat{S}$ of $\cat K$ is \emph{thick} if it is a full triangulated subcategory closed under retracts. Given an object $G$ (or a set of them) in $\cat K$, we write $\thick_{\cat K}(G)$ for the smallest thick subcategory, with respect to inclusion, containing $G$. The objects in $\thick_{\cat K}(G)$ are referred to as being \emph{finitely built from $G$}. This subcategory is also often denoted $\langle G \rangle$; see, for example, \cite{Bondal/VanDenBergh:2003}. One can construct $\thick_{\cat K}(G)$ inductively as follows: set $\thick_{\cat K}^1(G)$ to be the smallest full subcategory of $\cat K$ containing $G$ and closed under suspensions, finite sums and retracts. For $n> 1$, let $\thick_{\cat K}^n(G)$ be the smallest full subcategory of $\cat K$ containing all objects $A$ that fit into a triangle 
\[
A\to B\to C\to \shift A\,
\] 
with $B$ in $\thick_{\cat K}^{n-1}(G)$ and $C$ in $\thick_{\cat K}^1 (G)$, and which is closed under suspensions, finite sums and retracts. One has a filtration
\[
\thick_{\cat K}(G) =\bigcup_{n\geqslant 1}\thick_{\cat K}^n(G)\,.
\]
See \cite{Bondal/VanDenBergh:2003}, and also \cite{Avramov/Buchweitz/Iyengar/Miller:2010}, for details. An object $G$ is a \emph{(classical) generator} for $\cat K$ if $\thick_{\cat K}(G)=\cat K$; it is a \emph{strong generator} provided that $\thick_{\cat K}^d(G)=\cat K$ for some integer $d$. Clearly, if $\cat K$ admits a strong generator, any generator is a strong generator. 

Suppose $\cat K$ admits all coproducts. A triangulated subcategory of $\cat K$ is \emph{localizing} when it is closed under small coproducts. We write $\Loc_{\cat K}(G)$ for the smallest localizing subcategory containing $G$, and speak of objects in this category as being \emph{built} from $G$. An object $C$ in $\cat K$ is \emph{compact} if $\Hom_{\cat K}(C,-)$ commutes with coproducts in $\cat K$, and $\cat K$ is \emph{compactly generated} if there exists a set of compact objects $G$ such that $\Loc_{\cat K}(G)=\cat K$. We write ${\cat K}^{\mathrm c}$ for the compact objects in $\cat K$; this is a thick subcategory of $\cat K$. If $G$ is a set of compact objects in $\cat K$, then
\begin{equation}
\label{eq:neeman-cgt}
\Loc_{\cat K}(G) \cap {\cat K}^{\mathrm c}= \thick_{\cat K}(G)\,.
\end{equation}
This is proved by Neeman~\cite[Theorem~2.1]{Neeman:1992a}; see also ~\cite[Proposition~3.4.15]{Krause:2022}.
\end{chunk}

Many of the triangulated categories of interest in this manuscript come equipped with an action of another triangulated category, and our proofs exploit this additional structure. The relevant notions are recalled below. For details see \cite{Stevenson:2013a}.

\begin{chunk}
\label{ch:actions}
Let $(\cat T,\otimes,\one)$ be a tensor triangulated category that acts on $\cat K$ (on the left), in the sense of Stevenson~\cite[Section~3]{Stevenson:2013a}; see also \cite{Stevenson:2018a}. We write $\odot$ to denote this action. It is suggestive to think, and speak, of $\cat K$ as a $\cat T$-module, as in \cite{Stevenson:2013a,Stevenson:2014b}. Given an object $G$ (or, as before, a set of them) in $\cat K$, we denote $\thick^{\odot}_{\cat K}(G)$ the thick \emph{$\cat T$-submodule of $\cat K$ generated by $G$}, that is to say, the smallest thick subcategory of $\cat K$ that is closed under the action of $\cat T$.

When $\cat T$ and $\cat K$ admit all coproducts, we consider also $\Loc^{\odot}_{\cat K}(G)$, the \emph{localizing $\cat T$-submodule} of $\cat K$ generated by $G$. If  $\Loc_{\cat T}(U)=\cat T$ for a set of objects $U$, then
\begin{equation}
\label{eq:localizing-module}
\Loc^{\odot}_{\cat K} (G)=\Loc_{\cat K}(A\odot B\mid A\in U \text{ and } B\in G)\,. 
\end{equation}
In particular, if $\Loc_{\cat T}(\one)=\cat T$, then $\Loc^{\odot}_{\cat K}(G)=\Loc_{\cat K}(G)$; see \cite[Lemma~3.13]{Stevenson:2013a}.

Suppose that $\cat T$ is compactly generated. Then Brown representability yields that $\cat T$ has an internal function object, $\mathrm{hom}(-,-)$, adjoint to $-\otimes-$; we assume that this is exact in each variable. An object $D$ in $\cat T$ is \emph{rigid} if for each $E$ in $\cat T$ the natural map
\[
\mathrm{hom}(D,\one)\otimes E \longrightarrow \mathrm{hom}(D,E)
\]
is an isomorphism. The category $\cat T$ is \emph{rigidly compactly generated} if it is compactly generated, and the set of compact objects and rigid objects coincide; see \cite[Defintion~4.1]{Stevenson:2013a}. In this case, the unit $\one$ of $\cat T$ is compact. When a rigidly compactly generated category $\cat T$ acts on a compactly generated category $\cat K$, the action restricts to compact objects, that is to say, ${\cat T}^{\mathrm c}$ acts on ${\cat K}^{\mathrm c}$; see \cite[Lemma~4.6]{Stevenson:2013a}.

These observations yield also a module version of \cref{eq:neeman-cgt}, namely, when $\cat T$ is rigidly compactly generated and $\cat K$ is compactly generated, for any set $G$ of compact objects in $\cat K$, one has
\begin{equation}
\label{eq:neeman-tt}
\Loc^{\odot}_{\cat K}(G) \cap {\cat K}^{\mathrm c}= \thick^{\odot}_{\cat K}(G)\,, 
\end{equation}
where the category on the right is the $\cat T^{\mathrm c}$-submodule of ${\cat K}^{\mathrm c}$ generated by $G$. This follows from \eqref{eq:neeman-cgt}, given \eqref{eq:localizing-module}.
\end{chunk}

\begin{chunk}
\label{ch:BondalVanDenBergh}
Let $X$ be a noetherian scheme and $\dcat X$ the derived category of quasi-coherent sheaves, viewed as a triangulated category with suspension functor $\shift$. When $X=\spec R$ is affine, we identify $\dcat{X}$ with $\dcat{R}$, the derived category of $R$-modules. 

As a triangulated category $\dcat X$ admits arbitrary coproducts, and is compactly generated.
The compact objects in $\dcat X$ are the bounded complexes of vector bundles; that is to say, \emph{perfect} complexes. We write $\perf X$ for the full subcategory consisting of perfect complexes. Bondal and Van den Bergh~\cite[Theorem~3.1.1]{Bondal/VanDenBergh:2003}, see also \cite[Theorem~3.1.3]{Bondal/VanDenBergh:2003} or \cite[\href{https://stacks.math.columbia.edu/tag/09T4}{Tag 09T4}]{StacksProject}, proved that $\dcat X$ has a compact generator: a perfect complex $G$ such that 
\[
\dcat X=\Loc_{\dcat X}(G)\,.
\]
It follows that $\perf X=\thick_{\dcat X}(G)$; see \cref{eq:neeman-cgt}.

Our main interest is in $\dbcat{\coh X}$, the full subcategory of $\dcat X$ consisting of bounded complexes with coherent cohomology, and in finding strong generators for this triangulated category. Aoki~\cite{Aoki:2021} proved that these exist whenever $X$ is separated, quasi-excellent, and of finite Krull dimension.
\end{chunk}

\begin{chunk}
The derived tensor product $-\lotimes_X -$ endows $\dcat X$ with a structure of a rigidly compactly generated tensor triangulated category. Moreover $\perf X$ is a tensor triangulated subcategory of $\dcat X$ and the derived tensor product gives an action of $\perf X$ on $\dcat X$, in the sense of \cref{ch:actions}. One has
\[
\Loc^{\odot}_{\dcat X}(G) = \Loc^{\otimes}_{\dcat X}(G)
\]
for any $G$ in $\dcat X$, where the category on the left is the localizing $\perf X$-submodule of $\dcat X$ generated by $G$, and the category on the right is the localizing tensor ideal of $\dcat X$. Moreover, when $X$ is affine, the unit of the tensor product- $\mco_X$, generates $\perf{X}$ as a thick subcategory, so 
\[
\Loc^{\odot}_{\dcat X}(G)=\Loc_{\dcat X}(G)\,.
\]
These observations will be used implicitly in the sequel.

The triangulated subcategory $\dbcat{\coh X}$ of $\dcat X$ is not closed under the tensor product, unless $X$ is regular; see \cref{ch:regular}. However, the tensor product on $\dcat X$ restricts to an action of $\perf X$ on $\dbcat{\coh X}$, so we are in the context of \cref{ch:actions}.
\end{chunk}

The next paragraph is a recap on some results involving support for objects in $\dcat X$; for details see \cite[Appendix A]{Iyengar/Lipman/Neeman:2015}, or \cite{StacksProject}. This builds on the theory of support for complexes over commutative noetherian rings developed by Foxby~\cite{Foxby:1979}.

\begin{chunk}
\label{ch:perfect}
Let $X$ be a noetherian scheme as before, and fix $E$ in $\dcat X$. Given $x\in X$ we write $E_x$ for the stalk of $E$ at $x$, viewed as an object in the derived category of the local ring $\mco_{x}$. Let $k(x)$ denote the residue field of $\mco_{x}$. We identify $k(x)$ with the coherent sheaf on $\spec \mco_{x}$ it defines, as well as with the pushforward of this coherent sheaf along the localizing immersion $\spec\mco_{x}\to X$. The \emph{support} of the complex $E$ is the subset of $X$ prescribed by
\[
\supp_X E \colonequals \{x\in X\mid \hh(E\lotimes_X k(x))\ne 0\}\,.
\]
This is sometimes referred to as the \emph{small support} of $E$. When $E$ is in $\dbcat{\coh X}$ its support coincides with the usual one:
\[
\supp_X E = \{x\in X\mid \hh(E_x)\ne 0\}\,.
\]
In particular, in this case it is a closed subset of $X$.

When $X=\spec R$, with $R$ a commutative noetherian ring and $E$ is the sheafification $\widetilde{M}$ of an $R$-complex $M$, this is the notion of support from \cite{Foxby:1979}, denoted $\supp_RM$. When moreover $M$ is in $\dbcat{\rmod R}$ one has
\[
\supp_X E = \supp_RM = V(\ann_R\hh(M))\,.
\]

Our interest in support stems from the following result, proved by Neeman~\cite{Neeman:1992b} in the affine case, and extended to schemes by Alonso Tarr\'io, Jerem\'ias L\'opez, and Souto Solario~\cite{Tarrio/Lopez/Solario:2004}: Given objects $E,G$ in $\dcat X$, one has
\begin{equation}
\label{eq:neeman}
E\in \Loc^{\otimes}_{\dcat X}(G)\Longleftrightarrow \supp_XE\subseteq \supp_XG\,.
\end{equation}

Using this result and \cref{eq:neeman-cgt} one deduces that when $E,G$ are in $\perf X$ one has
\begin{equation}
\label{eq:hopkins}
E\in \thick^{\otimes}_{\perf X}(G)\Longleftrightarrow \supp_XE\subseteq \supp_XG\,.
\end{equation}
This result was proved by Thomason~\cite{Thomason:1997}. The affine case is a result of Hopkins \cite{Hopkins:1987} and Neeman \cite{Neeman:1992b}. From \eqref{eq:hopkins} it follows, for example, that when $E$ in $\perf X$ has full support then it generates $\perf X$ as a module over itself. 
\end{chunk}

\begin{chunk}
Given $x\in X$ let $K$ be the Koszul complex on a finite generating set for the maximal ideal of the local ring $\mco_{x}$, and for each $E$ in $\dcat X$ set
\[
\res{E}x \colonequals E_x\otimes_{\mco_x}K\,,
\]
viewed as an element in $\dcat{\mco_{x}}$. This notation is ambiguous, for it does not reflect the choice of a generating set for the maximal ideal. However, if $K'$ is the Koszul complex on a different generating set, then 
$\thick(K)=\thick(K')$ as subcategories of $\dcat{\mco_x}$; this follows from \eqref{eq:hopkins}, but can also be checked directly, as is done in \cite[Lemma~6.0.9]{Hovey/Palmieri/Strickland:1997}. Consequently $\thick(\res{E}x)$ is well-defined, and this is what is relevant in the statement below, and also later on. Observe that, since $K$ is a perfect $\mco_{x}$-complex, when $E$ is in $\dbcat{\coh X} $, the complex $\res{E}x$ is in $\dbcat{\rmod \mco_{x}}$. 

\begin{theorem}
\label{th:local-global}
Let $X$ be a noetherian scheme. Fix objects $E,G$ in $\dbcat{\coh X}$. If $\res{E}x$ is in $\thick_{\dcat{\mco_x}}( G_x)$ for each $x\in X$, then $E$ is in $\thick^{\odot}_{\dcat X}(G)$.
\end{theorem}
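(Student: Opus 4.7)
The strategy is to realise $\dbcat{\coh X}$ as the subcategory of compact objects in a larger, compactly generated triangulated category carrying a compatible $\perf X$-action, and then invoke Stevenson's formalism (\cref{ch:actions}) to convert the desired statement into one about localizing submodules. A convenient choice is $\cat K\colonequals \kinj X$, the homotopy category of injective quasi-coherent $\mco_X$-modules; for noetherian $X$ one has $\cat K^{\mathrm c}\simeq \dbcat{\coh X}$, and the $\perf X$-action on $\dcat X$ extends to an action of $\dcat X$ on $\cat K$ compatible with this identification. Applying \cref{eq:neeman-tt} to $\cat K$ gives
$$\Loc^{\odot}_{\cat K}(G)\cap \dbcat{\coh X}=\thick^{\odot}_{\dbcat{\coh X}}(G)\subseteq\thick^{\odot}_{\dcat X}(G),$$
so the task reduces to showing $E\in \Loc^{\odot}_{\cat K}(G)$.

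\textbf{Local-to-global via local cohomology.} Since $\dcat X$ is rigidly compactly generated with Balmer spectrum naturally identified with $X$, Stevenson's local-to-global principle \cite{Stevenson:2013a} provides for each $x\in X$ an exact endofunctor $\Gamma_x$ of $\cat K$ and a decomposition
$$\Loc^{\odot}_{\cat K}(F)=\Loc^{\odot}_{\cat K}\bigl(\Gamma_x F \mid x\in X\bigr)$$
for every $F$ in $\cat K$. In the scheme setting the key compatibility is that $\Gamma_x F$ generates the same localizing $\perf X$-submodule as the Koszul--stalk construction $\res{F}{x}$ pushed forward from $\spec\mco_x$ to $X$. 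In particular $\Gamma_x E$ and $\res{E}{x}$ generate the same localizing submodule of $\cat K$.

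\textbf{Assembling the argument.} Fix $x\in X$. The pushforward of the stalk $G_x$ to $X$ is the localization of $G$ at $x$, realised as a filtered colimit of $\perf X$-translates of $G$; hence $G_x\in \Loc^{\odot}_{\cat K}(G)$. The hypothesis $\res{E}{x}\in\thick_{\dcat{\mco_x}}(G_x)$ then yields
$$\Gamma_x E\in\Loc^{\odot}_{\cat K}(G_x)\subseteq \Loc^{\odot}_{\cat K}(G).$$
Since this holds for every $x\in X$, the decomposition of the previous step forces $E\in \Loc^{\odot}_{\cat K}(G)$, and the first step converts this into $E\in\thick^{\odot}_{\dcat X}(G)$, as required.

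\textbf{Main obstacle.} The principal technical burden lies in the foundational bookkeeping: establishing the $\dcat X$-module structure on $\kinj X$ compatibly with the one on $\dcat X$, and checking that Stevenson's local cohomology functor $\Gamma_x$ and the Koszul--stalk construction $\res{(-)}{x}$ generate identical localizing submodules of $\cat K$. Once these identifications are in place, the argument becomes a direct application of Neeman's correspondence between localizing subcategories in the ambient compactly generated category and thick subcategories among its compact objects.
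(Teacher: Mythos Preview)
Your proposal is correct and follows essentially the same strategy as the paper: embed $\dbcat{\coh X}$ as the compacts in $\kinj X$, reduce via \cref{eq:neeman-tt} to a statement about localizing $\dcat X$-submodules, and then invoke Stevenson's local-to-global principle together with the fact that $\res{(-)}{x}$ and $\varGamma_x(-)$ generate the same localizing subcategory. The only cosmetic difference is that you phrase local-to-global as the decomposition $\Loc^{\odot}_{\cat K}(F)=\Loc^{\odot}_{\cat K}(\varGamma_x F\mid x\in X)$ and argue globally via the inclusion $G_x\in\Loc^{\odot}_{\cat K}(G)$, whereas the paper uses the equivalent ``detection'' form \eqref{eq:local-global} and verifies $\varGamma_xE\in\Loc_{\cat K(\mco_x)}(G_x)$ locally over $\mco_x$; these are two packagings of the same content.
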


A proof is given further below; a input in it is the work of Stevenson~\cite{Stevenson:2014a}.  Observe that the converse also holds: if $E$ is in the $\perf X$-submodule generated by $G$, then since both $(-)_x$ and $-\otimes_{\mco_x}K$ are exact, $\res{E}x$ is generated by $\res{G}x$, and hence also by $G_x$, as $\mco_x$ finitely builds $K$.

\begin{chunk}
\label{ch:kinj-X}
Let $\kinj{ X}$ denote the homotopy category of quasi-coherent injective sheaves on $X$. This is a compactly generated triangulated category and restricting the natural localization functor 
\[
\bfq\colon \kinj{X}\longrightarrow \dcat X
\]
to the subcategory of compact objects induces an equivalence
\begin{equation}
\label{eq:kinjc}
\kinj{ X}^{\mathsf c}\xrightarrow{\ \sim\ }\dbcat{\coh X}\,.
\end{equation}
See \cite[Theorem~1.1]{Krause:2005}, where these results are proved when $X$ is separated, and \cite[Appendix~B]{Coupek/Stovicek:2020} for the general case.

The tensor triangulated category $\dcat{X}$ is rigidly compactly generated and acts on $\kinj{X}$, in the sense of \cref{ch:actions}. One can view this action as the ind-completion of the action of $\cat{Perf}(X)$ on $\dbcat{\coh X}$. Here is a concrete description of this action, following \cite[Section~3]{Stevenson:2014a}.

Given a flat $\mco_X$-module $F$ and an injective $\mco_X$-module $I$, the $\mco_X$-module $F\otimes_XI$ is injective since $X$ is noetherian. Thus, the tensor product induces an action of the homotopy category of complexes of flat modules on $\kinj X$. When $F$ is an acyclic complex of flat modules with flat syzygies (also known as a pure acyclic complex), and $I$ is a complex of injectives, $F\otimes_XI$ is contractible. Thus the said action factors through the Verdier quotient of the homotopy category of flats by the subcategory of pure acyclic complexes. This category, introduced by Murfet~\cite{Murfet:2007, Neeman:2010}, is usually denoted $\cat{N}(X)$ and called the Neeman category of $X$. When $X$ is affine this is equivalent to the homotopy category of projective modules. A complex $F$ of $\mco_X$-modules is \emph{$K$-flat} if it consists of flat $\mco_X$-modules and $F\otimes_X-$ preserves quasi-isomorphisms. Taking $K$-flat resolutions gives a fully faithful embedding of $\dcat X$ into $\cat N(X)$, compatible with tensor products and coproducts. Via this embedding one gets an action of $\dcat X$ on $\kinj X$.

In what follows, we write $\Loc^{\odot}_{\cat K(X)}(G)$, with $\cat K(X)$ in the subscript rather than $\kinj X$, for the localizing submodule of $\kinj X$ generated by an object $G$. 

Fix $x\in X$ and let $\varGamma_x$ denote the exact functor from $\dcat X$ to $\dcat{\mco_x}$ defined by the assignment
\[
E\longrightarrow \mathbf{R}\varGamma_{\overline{\{x\}}}(E_x)\,.
\]
 See \cite[Definition~5.3]{Stevenson:2013a}. The local-to-global principle~\cite[Theorem~6.9]{Stevenson:2013a} yields that given $E,G$ in $\kinj{X}$, viewed with the $\perf X$-action, one has
\begin{equation}
\label{eq:local-global}
E\in \Loc^{\odot}_{\cat K(X)}(G) \Longleftrightarrow
\varGamma_x E\in \Loc_{\cat K(\mco_x)}(\varGamma_xG) \quad \text{for each $x\in X$}.
\end{equation}
The condition on the right is equivalent to $\varGamma_xE$ being in the localizing subcategory generated by $G_x$, for the functor $\varGamma_x(-)$ commutes with coproducts. For another version of this local-to-global principle see \cite[Lemma~3.2]{Benson/Iyengar/Krause:2011a}.
\end{chunk}

\begin{proof}[Proof of ~\cref{th:local-global}]
The basic idea is to apply the local-to-global principle. We use the equivalence \cref{eq:kinjc} and identify $E$ and $G$ with their images in $\kinj{X}$. The desired conclusion is that $E$ is in the localizing $\dcat X$-submodule of $\kinj{X}$ generated by $G$; the statement about thick submodules follows by \cref{eq:neeman-tt}. For any $x$ in $X$ one has that
\[
\Loc_{\cat K(\mco_x)}(\res {E}x) = \Loc_{\cat K(\mco_x)}(\varGamma_xE)\,.
\]
This holds because $\res {\mco}x$, the Koszul complex of the local ring $\mco_x$, and $\varGamma_x\mco_x$ build each other in $\dcat{\mco_x}$; indeed both objects in question are supported on $\{x\}$, so \cref{eq:neeman} applies. Thanks to the tensor action of $\dcat{\mco_x}$ on $\kinj{\mco_x}$ the equality above holds. One can also give a direct, elementary proof; see, for example, \cite[Proposition~2.11]{Benson/Iyengar/Krause:2011a}. Given this observation, the hypothesis implies
\[
\varGamma_xE\in \Loc_{\cat K(\mco_x)}(G_x) \quad \text{for each }x\in X.
\]
At this point we can invoke the local-to-global principle \eqref{eq:local-global} to deduce that $E$ is in the $\dcat X$-submodule of $\kinj X$ generated by $G$, as desired.
\end{proof}
\end{chunk}

\begin{chunk}
Fix $x\in X$, set $R\colonequals \mco_x$ and $k\colonequals k(x)$. We write $\varGamma_x\dbcat{\rmod R}$ for the subcategory of $\dbcat{\rmod R}$ consisting of $R$-complexes $M$ supported at the closed point $x$ of $\spec R$; equivalently, the $R$-module $\hh(M)$ has finite length. This subcategory contains $\res{E}x$ for all $E$ in $\dbcat{\coh X}$. The residue field $k(x)$ of $R$ generates $\varGamma_x\dbcat{\rmod R}$; see, for example, \cite{Dwyer/Greenlees/Iyengar:2006b}. However $k(x)$ is a strong generator for this category if and only if $R$ is artinian; see, for instance, \cite{Rouquier:2008}. 
\end{chunk}

Given these observations the result below is a consequence of \cref{th:local-global}.

\begin{corollary}
\label{co:local-global}
Let $X$ be a noetherian scheme and fix $G$ in $\dbcat{\coh X}$. If for each $x\in X$, the complex $G_x$ finitely builds $k(x)$, 
then $G$ generates $\dbcat{\coh X}$ as a $\perf X$-module. \qed
\end{corollary}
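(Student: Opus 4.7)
The plan is a direct application of \cref{th:local-global}. Fix an arbitrary $E$ in $\dbcat{\coh X}$; we aim to show $E$ lies in $\thick^{\odot}_{\dcat X}(G)$. By \cref{th:local-global}, it suffices to verify, for each $x\in X$, that the complex $\res{E}{x}=E_x\otimes_{\mco_x}K$ belongs to $\thick_{\dcat{\mco_x}}(G_x)$, where $K$ is the Koszul complex on a generating set of the maximal ideal of $\mco_x$.

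First I would observe that $\res{E}{x}$ automatically has finite length cohomology: it lies in $\dbcat{\rmod \mco_x}$ (since $E_x$ is in $\dbcat{\rmod \mco_x}$ and $K$ is perfect), and its support is contained in the support of $K$, which is $\{x\}$. Hence $\res{E}{x}$ is an object of the subcategory $\varGamma_x\dbcat{\rmod \mco_x}$ recalled in the chunk preceding the corollary. Second, the discussion there records that $k(x)$ classically generates $\varGamma_x\dbcat{\rmod \mco_x}$ as a thick subcategory, so $\res{E}{x}$ is finitely built from $k(x)$ in $\dcat{\mco_x}$.

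Third, the hypothesis says $G_x$ finitely builds $k(x)$, so by transitivity of the thick-subcategory relation, $\res{E}{x}$ is finitely built from $G_x$, as required. Invoking \cref{th:local-global} then places $E$ in $\thick^{\odot}_{\dcat X}(G)$, which (intersected with $\dbcat{\coh X}$) is the $\perf X$-submodule of $\dbcat{\coh X}$ generated by $G$. Since $E$ was arbitrary, $G$ generates $\dbcat{\coh X}$ as a $\perf X$-module.

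There is essentially no obstacle here, as the nontrivial work is packaged into \cref{th:local-global} and the standard fact that $k(x)$ generates the finite length subcategory; the only point requiring care is to notice that the Koszul-twist $\res{E}{x}$ (not $E_x$ itself) is what lands in the finite length category, which is exactly the form in which \cref{th:local-global} is stated.
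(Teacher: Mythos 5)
Your proof is correct and is essentially the paper's own argument: the paper states \cref{co:local-global} with no written proof precisely because it is meant to be the composition of the three observations you lay out (finite length cohomology of $\res{E}{x}$, generation of $\varGamma_x\dbcat{\rmod\mco_x}$ by $k(x)$, and the hypothesis on $G_x$), followed by \cref{th:local-global}. The only small point worth flagging is your parenthetical claim that $\thick^{\odot}_{\dcat X}(G)\cap\dbcat{\coh X}$ equals the $\perf X$-submodule of $\dbcat{\coh X}$ generated by $G$: this is not automatic from the statement of \cref{th:local-global} as written, but it is exactly what the proof of that theorem establishes, since the argument there runs through $\kinj X$ and invokes \cref{eq:neeman-tt} to land in the compacts, i.e.\ in $\thick^{\odot}$ computed inside $\dbcat{\coh X}$ via the equivalence \cref{eq:kinjc}. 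So the intersection step is justified, but by the content of the proof of \cref{th:local-global} rather than by its stated conclusion alone.
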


One reason for our interest in finding generators of derived categories is that they are test objects for finiteness of various homological invariants. A sample result along these lines is provided in \cref{le:sample}; see also \cite{Dwyer/Greenlees/Iyengar:2006b}. To illustrate this point, we recall a classical characterization of regularity.

\begin{chunk}
\label{ch:regular}
Let $X$ be a noetherian scheme. Clearly $\perf X\subseteq \dbcat{\coh X}$. Equality holds precisely when $X$ is regular, that is to say, the local rings $\mco_{x}$ are regular for $x$ in $X$; equivalently, at each closed point $x$ in $X$. This characterization of regularity is  due to Auslander, Buchsbaum, and Serre, given the following observation: A complex $E$ in $\dbcat{\coh X}$ is perfect if and only if $E_x$ is perfect in $\dbcat{\rmod \mco_x}$ for each $x\in X$; equivalently, at each closed point $x\in X$. In the affine case, which implies the global version, this is due to Bass and Murthy \cite{Bass/Murthy:1967}; see also \cite[Theorem~4.1]{Avramov/Iyengar/Lipman:2010}.
 
Here is a related result: Any compact generator for $\dcat X$ generates $\perf X$, as a thick subcategory. However $\perf X$ has a strong generator if and only if $X$ is regular and of finite Krull dimension; see \cite[Proposition~7.25]{Rouquier:2008}.
\end{chunk}

\begin{lemma}
\label{le:sample}
Let $X$ be a noetherian scheme and $G$ a generator for $\dbcat{\coh X}$. An object $E$ in $\dbcat{\coh X}$ is perfect if and only if $\Ext_X^i(E,G)=0$ for $i\gg 0$. In particular, if $\Ext_X^i(G,G)=0$ for all $i\gg 0$, then $X$ is regular. 
\end{lemma}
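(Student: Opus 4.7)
The plan is to combine the local criterion for perfectness in \cref{ch:regular} with the Auslander--Buchsbaum--Serre characterization of finite projective dimension: an object $E$ in $\dbcat{\coh X}$ is perfect if and only if, for every closed point $x\in X$, the stalk $E_x$ is perfect over $\mco_x$, which is in turn detected by the vanishing of $\Ext^i_{\mco_x}(E_x,k(x))$ for $i\gg 0$. The role of the hypothesis that $G$ generates $\dbcat{\coh X}$ is to bootstrap $\Ext$-vanishing against $G$ to $\Ext$-vanishing against each $k(x)$.

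For the non-trivial ``if'' direction, I would introduce the subcategory
\[
\cat T \colonequals \{F \in \dbcat{\coh X}\mid \Ext^i_X(E,F)=0 \text{ for all } i\gg 0\}\,.
\]
The long exact sequence of $\Ext$ shows that $\cat T$ is thick in $\dbcat{\coh X}$, and by hypothesis $G\in \cat T$. Since $G$ generates $\dbcat{\coh X}$, it follows that $\cat T=\dbcat{\coh X}$. For a closed point $x\in X$, the skyscraper $k(x)$ then lies in $\cat T$. Viewing $k(x)$ as the pushforward of the residue field along the flat morphism $\spec\mco_x\to X$, adjunction yields
\[
\Ext^i_X(E,k(x))\cong \Ext^i_{\mco_x}(E_x,k(x))\,,
\]
so this group vanishes for $i\gg 0$. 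Auslander--Buchsbaum--Serre then forces $E_x\in \perf\mco_x$ for every closed $x$, and \cref{ch:regular} gives $E\in\perf X$. The converse direction is routine: for $E$ perfect the sheaf $\mathbf{R}\SheafHom_X(E,G)$ has bounded coherent cohomology (working locally $E$ is a bounded complex of finite rank free modules), so $\mathbf{R}\Hom_X(E,G)=\mathbf{R}\Gamma(X,\mathbf{R}\SheafHom_X(E,G))$ is bounded because $\mathbf{R}\Gamma$ has finite cohomological dimension on a noetherian scheme, giving the required vanishing.

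The ``in particular'' is then immediate on setting $E=G$: the assumption forces $G\in\perf X$, whence $\dbcat{\coh X}=\thick(G)\subseteq\perf X$, and \cref{ch:regular} delivers regularity. The step that needs the most care is the reduction from $G$-vanishing to $k(x)$-vanishing: one must stay at closed points so that $k(x)$ is coherent on $X$ and the flat-pullback adjunction correctly identifies the global $\Ext$ with the local one, consistent with the paper's convention for $k(x)\in\dbcat{\coh X}$. Everything else is either an invocation of thickness or a standard piece of local algebra.
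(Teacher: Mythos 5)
Your proof is correct and follows essentially the same route as the paper's. The paper compresses your thick-subcategory argument into the single observation that $k(x)$ is finitely built from $G$ (so the $\Ext$-vanishing against $G$ propagates to $\Ext$-vanishing against $k(x)$ by thickness of the vanishing locus), and then invokes the same local criterion for perfectness via \cref{ch:regular}; the paper declares the ``only if'' direction clear, whereas you spell it out via boundedness of $\mathbf{R}\SheafHom_X(E,G)$ and finite cohomological dimension of $\mathbf{R}\Gamma$, which is a reasonable justification (tacitly assuming $X$ has finite Krull dimension, as in all the paper's applications).
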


\begin{proof}
The second assertion follows immediately from the first, where the only if direction is clear. Suppose $\Ext_X^i(E,G)=0$ for $i\gg 0$. Since $G$ generates the bounded derived category, for each closed point $x\in X$, the coherent sheaf $k(x)$ is finitely built from $G$ in $\dcat X$, hence the hypothesis yields
\[
\Ext^i_{\mco_x}(E_x,k(x)) \cong \Ext^i_{X}(E,k(x)) =0 \quad \text{for $i\gg 0$.}
\]
It follows that the $\mco_x$-module $E_x$ is perfect. Thus $E$ is perfect; see \cref{ch:regular}.
\end{proof}

\section{Local rings}\label{se:nilpotence}

In this section we establish a nilpotence-type result concerning the Frobenius endomorphism on a local ring. This is one of the key inputs into our arguments, in the next section concerning generators for the bounded derived category of a scheme over a field of positive characteristic.

Let $R$ be a noetherian ring of prime characteristic $p$; that is to say, $p$ is a prime number and $R$ contains the field $\mathbb{F}_p$ as a subring. Let
\[
F\colon R\to R\quad\text{given by}\quad r\mapsto r^{p}
\]
be its Frobenius endomorphism. It is well-known that the action induced by the Frobenius on various homology modules is often trivial; this springs from the fact that the Frobenius endomorphism on any simplicial commutative ring induces the trivial map in homology; see \cite{Pirashvili:2007} and also \cite[Section~11]{Bhatt/Scholze:2017a}. The gist of the result below is that, on noetherian local rings, the Frobenius endomorphism is even essentially nilpotent; see also \cref{pr:nilpotence}.

\begin{theorem}
\label{th:nilpotence}
Let $R$ be a noetherian local ring of prime characteristic $p$. For any $R$-complex $M$ and natural number $e> \log_p(\codepth R)$ there is an isomorphism 
\[
F ^e_*(K^M) \simeq \hh(F ^e_*(K^M)) \quad\text{in $\dcat R$.}
\]
In particular, the residue field $k$ is in $\thick_{\dcat R}(F ^e_*(K^M))$ whenever $\hh(K^M)\ne 0$.
\end{theorem}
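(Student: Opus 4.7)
The plan is to exploit the interplay between the DG-algebra structure of the Koszul complex and the fact that Frobenius pushforward twists the $R$-action by $p^e$-th powers. Write $\fm = (x_1,\ldots,x_n)$ for a minimal generating set with $n = \edim R$, let $K = K(x_1,\ldots,x_n;R)$, and interpret $K^M := K\otimes_R M$, so that $\hh(K^M) = H_*(\underline x;M)$. Two structural features of $K$ will be essential. First, each $x_i = d(e_i)$ is a boundary in the commutative DG-algebra $K$, so multiplication by $x_i$ on $K$, and hence on $K^M$, is null-homotopic; thus $\fm$ annihilates $\hh(K^M)$ and $\hh(K^M)$ is a graded $k$-vector space. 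Second, through a Cohen presentation $R = P/I$ with $P$ regular local, $\hh(K) \cong \Tor^P_*(k,R)$ is supported in the window $[0,\codepth R]$ because $\pdim_P R = \codepth R$ by Auslander--Buchsbaum. So the DG-algebra $K$ has cohomological amplitude $\codepth R$, which is the number that must interact with $p^e$.

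Passing to the Frobenius pushforward, $\hh(F^e_*(K^M)) = F^e_*\hh(K^M)$ on the nose, and the twisted $R$-action $r\cdot_{F^e_*} = r^{p^e}\cdot_{\text{old}}$ means that $\fm$ annihilates the cohomology through $\fm^{[p^e]}\cdot_{\text{old}}$, so Koszul homology is a $k$-vector space in the new $R$-structure. The analogous statement holds for $F^e_*K$ viewed as a DG-algebra: its cohomology is a graded $k$-algebra of amplitude at most $\codepth R$, and all internal differentials effectively pick up a factor of $\fm^{[p^e]}$ under the twist.

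The heart of the argument, and the main obstacle, is to upgrade these observations to formality $F^e_*(K^M)\simeq \hh(F^e_*(K^M))$ in $\dcat R$. My strategy is a length-versus-twist mismatch: working with a minimal $A_\infty$-model of $F^e_*(K^M)$ over $F^e_*K$, the higher operations $m_n$ for $n\geq 3$ are iterated Massey products landing in a cohomology supported in a cohomological range of length at most $\codepth R$, while the Frobenius twist forces each such operation to shift along the $\fm$-filtration by at least $p^e$. The hypothesis $p^e > \codepth R$ then pushes the output of every higher operation out of the support of $\hh(F^e_*(K^M))$, forcing it to vanish, and the minimal model collapses to $m_2$ alone --- i.e., formality. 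A parallel route I would also consider is to first establish formality of the DG-algebra $F^e_*K$ by the same obstruction count and then deduce formality of any $F^e_*K$-module from the semisimplicity of the $\hh(F^e_*K)$-action coming from the $k$-vector space structure on cohomology; I would pursue whichever variant is cleanest once the obstruction theory is pinned down.

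Finally, the ``in particular'' clause is immediate from formality: if $\hh(K^M)\ne 0$ then some $H_i(\underline x;M)$ is a nonzero $k$-vector space, so by formality $F^e_*(K^M)$ admits $F^e_*k$ as a direct summand in $\dcat R$. Since $F^e_*k$ is a nonzero direct sum of copies of $k$ as an $R$-module, $k\in\thick_{\dcat R}(F^e_*k)$ and consequently $k\in\thick_{\dcat R}(F^e_*(K^M))$.
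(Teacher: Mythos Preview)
Your setup is fine and the ``in particular'' clause is correctly deduced from formality. The gap is in the formality argument itself. Your claim that ``the Frobenius twist forces each such operation to shift along the $\fm$-filtration by at least $p^e$'' does not hold as stated: Frobenius pushforward is restriction of scalars, so the underlying complex $K^M$, its differential, and its $K$-module structure are literally unchanged; only the external $R$-action is twisted. Any $A_\infty$ operations produced by homotopy transfer from the internal data of $K$ or $K^M$ are therefore the \emph{same} before and after pushforward---there is nothing for the twist to act on. Moreover, the $\fm$-filtration on $\hh(K^M)$ is already trivial (it is a $k$-vector space), so there is no filtration shift available on cohomology. Your parallel route---first prove $F^e_*K$ is formal as a DG $R$-algebra, then restrict modules through $k$---is the right shape, but you give no argument for that formality either; the same non-obstruction applies.

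The paper supplies the missing mechanism by passing to simplicial commutative algebras. After completing $R$ and choosing a minimal Cohen presentation $Q\twoheadrightarrow R$, one models $K^R$ by a free simplicial $k$-algebra $k\{X\}$. The decisive input is Quillen's connectivity theorem: for the augmentation ideal $J\subset k\{X\}$ one has $\hh_i(J^{n+1})=0$ for $i\le n$. Since $\hh_i(k\{X\})\cong \hh_i(K^R)=0$ for $i>c=\codepth R$, the quotient $k\{X\}\to k\{X\}/J^{c+1}$ followed by a Postnikov truncation is a weak equivalence. Now the Frobenius on the simplicial free resolution sends the augmentation ideal into its $p^e$-th power, hence (when $p^e>c$) into $J^{c+1}$; so the composite $R\xrightarrow{F^e}R\to K^R$ factors through $k$ in the homotopy category of simplicial rings. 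Any $K^R$-module then restricts, via $F^e$, to a complex of $k$-vector spaces and is therefore formal over $R$. This factoring-through-$k$ is precisely the bridge your sketch is missing between ``Frobenius raises to $p^e$-th powers'' and ``amplitude $\le c$''; without Quillen's theorem (or a DG substitute of equal strength) the obstruction count does not close.
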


In the statement, $K^M\colonequals K\otimes_RM$ where $K$ is the Koszul complex on minimal generating set for the maximal ideal of $R$, and $F ^e_*$ is the restriction of scalars functor along $F ^e$. The \emph{codepth} of $R$ is the non-negative integer
\[
\codepth R\colonequals \edim R -\depth R=\sup\{i\mid \hh_i(K^R)\ne 0\}\,.
\]
The equality on the right is by the depth sensitivity of the Koszul complex. Let $Q\twoheadrightarrow R$ be a Cohen presentation, meaning that $Q$ is a regular local ring and the map is surjective. One has
\begin{equation}
\label{eq:codepth-pdim}
\codepth R \le \pdim_Q R\,,
\end{equation}
and equality holds if (and only if) $\edim Q=\edim R$, that is to say, when $Q\twoheadrightarrow R$ is a minimal Cohen presentation.
This holds because of the formula of Auslander and Buchsbaum~\cite[Theorem~1.3.3]{Bruns/Herzog:1998}.

Since complete local rings admit Cohen presentations, this remark can be applied after completing $R$ at its maximal ideal, for the codepth of $R$ coincides with that of its completion; indeed, both the embedding dimension and the depth remain unchanged in this process.

\begin{chunk}
\label{ch:AIM}
Compare the statement of \cref{th:nilpotence} with \cite[Theorem~6.2.2]{Avramov/Iyengar/Miller:2006} that gives the same conclusion but where the lower bound for $e$ is the spread of $R$. This number can be computed in terms of the graded Betti-numbers of $\mathrm{gr}_{\fm}(R)$, the associated graded ring of $R$ at its maximal ideal $\fm$, over the symmetric algebra on $\fm/\fm^2$, and is related to the regularity of $\mathrm{gr}_{\fm}(R)$. The spread is harder to control; for instance, when $R$ is a (not necessarily local) noetherian ring essentially of finite type, we do not know whether there is a global bound on the spread of $R_\fp$ as $\fp$ varies over the prime ideals in $R$. Such a bound is clear for the codepth and this fact is important in the sequel.

A key difference between the techniques used in \cite[Theorem~6.2.2]{Avramov/Iyengar/Miller:2006} and our technique in \cref{th:nilpotence} is that we exploit the special properties of the Frobenius map by working in category of simplicial algebras and simplicial modules over them. This yields our codepth bound in \cref{th:nilpotence}. The pertinent constructions and results concerning simplicial algebras are recalled below; see \cite{Quillen:1967,Schwede/Shipley:2003} for proofs.
\end{chunk}

\begin{chunk}
\label{ch:dold-kan}
Given a simplicial ring $\mathcal A$ we write $\mathrm{Simp}(\mathcal A)$ for the category of simplicial $\mathcal A$-modules, with the usual model structure, and $\mathrm{Ho}(\mathcal A)$ for the corresponding homotopy category.

Viewing a commutative ring $R$ as a simplicial ring in the standard way, by the Dold-Kan theorem one gets an equivalence of categories
\[
 \begin{tikzcd}
 \mathrm{Simp}(R) \ar[r, shift right=1ex, "N" swap, "\sim"] 
 & \mathrm{Ch}_{\geqslant 0}(R)\,, \ar[l, shift right = 1ex, "\varGamma" swap]
 \end{tikzcd}
\]
where $N(-)$ is the normalization functor. These are compatible with natural model structures on the source and target. In particular, $N$ induces an equivalence on homotopy categories:
\[
N\colon \mathrm{Ho}(R) \xrightarrow{\ \sim\ } \cat{D}_{\geqslant 0}(R)\,.
\]
Here $\cat{D}_{\geqslant 0}(R)$ is the derived category on non-negatively graded chain complexes, which can be identified with the full-subcategory of $\dcat R$ consisting of $R$-complexes with homology concentrated in non-negative degrees. It is not a triangulated subcategory, for it is not closed under negative suspensions.

Thanks to the equivalence above, one can work with simplicial $R$-modules, at least for objects in $\cat{D}_{\geqslant 0}(R)$.

For a general simplicial ring $\mathcal A$, its normalization $N(\mathcal A)$ is a graded-commutative differential graded algebra, concentrated in non-negative degrees, and the Dold-Kan functor induces an equivalence between $\mathrm{Ho}({\mathcal A})$ and $\cat{D}_{\geqslant 0}(N(\mathcal A))$, the derived category of differential graded $N(\mathcal A)$-modules with homology concentrated in non-negative degrees. This subsumes the case $\mathcal A=R$ discussed above.
\end{chunk}

\begin{chunk}
\label{ch:formality}
Let $R$ be a ring. An $R$-complex $M$ is said to be \emph{formal} if there is an isomorphism $M\simeq \hh(M)$ in $\dcat R$. Given the Dold-Kan equivalence, we say a simplicial $R$-module is formal if its normalization is formal. For instance, when $k$ is a field, any simplicial $k$-vector space is formal.
\end{chunk}

\begin{chunk} 
\label{ch:postnikov}
Given a simplicial ring $\mathcal A$ and an integer $n\ge 0$, there is a simplicial ring $\mathcal B$ and a map of simplicial rings $\varphi \colon \mathcal A\to \mathcal B$ with the following properties:
\begin{enumerate}[\quad\rm(1)]
 \item 
 $\hh_i(\mathcal B)=0$ for $i\ge n+1$;
 \item
 $\hh_i(\varphi)$ is bijective for $i\le n$.
\end{enumerate}
The map can be obtained by a process of killing the homology in $\mathcal A$ in degree $n+1$ and higher; see also the discussion on \cite[pp.~162]{Toen:2010} for the construction of $\varphi$. This is part of the data of a Postnikov tower for $\mathcal A$.
\end{chunk}

\begin{chunk}
\label{ch:change-of-rings}
 Given a map $\vp\colon \mathcal A\to \mathcal B$ of simplicial rings, one has an adjoint pair of functors
\[
\begin{tikzcd}
\mathrm{Ho}(\mathcal A) \ar[r, shift left=1ex, "\vp^*" ] 
 & \mathrm{Ho}(\mathcal B) \ar[l, shift left = 1ex, "\vp_*" ]
\end{tikzcd}
\]
where $\vp_*$ is restriction along $\vp$, and $\vp^*$ is induced by $\mathcal B\otimes_{\mathcal A}-$. When $\vp$ is an equivalence, $\vp^*$ and $\vp_*$ are inverse equivalences of categories. The normalization $N(\mathcal A)$ is a differential graded algebra, and the following diagram 
\[
\begin{tikzcd}
 \mathrm{Ho}(\mathcal A) \ar[d, "\simeq", "N" swap] 
 & \mathrm{Ho}(\mathcal B) \ar[l, "\vp_*" ] \ar[d, "N", "\simeq" swap] \\
 \cat{D}_{\geqslant 0}(N(\mathcal A)) & \cat{D}_{\geqslant 0}(N(\mathcal B)) \ar[l, "\vp_*" ]
\end{tikzcd}
\]
commutes up to isomorphism of functors. This observation is used in recasting the statement in \cref{th:nilpotence} in terms of simplicial algebras and modules.
\end{chunk}

\begin{proof}[Proof of \cref{th:nilpotence}]
One has that $\fm\cdot \hh(K^M)=0$, where $\fm$ is the maximal ideal of $R$. Thus, with $\widehat{R}$ the $\fm$-adic completion of $R$, the map 
\[
K^M \longrightarrow \widehat{R}\otimes_R K^M \cong K^{\widehat R\otimes M}
\]
induced by the natural map $R\to \widehat{R}$, is a quasi-isomorphism. Thus passing to $\widehat R$, we can assume $R$ is complete in the $\fm$-adic topology. Let $Q\to R$ be a minimal Cohen presentation. 

We view $R$ as a simplicial $Q$-algebra in the usual way, and take a simplicial $R$-algebra model for $K^R$. Let $\rho\colon Q\{X\}\xra{\simeq}R$ and $Q\{Y\}\xra{\simeq}k$ be simplicial free resolutions of $R$ and $k$, respectively, as $Q$-algebras. Then 
\[
Q\{X,Y\}\colonequals Q\{X\}\otimes_QQ\{Y\}
\]
is a simplicial free resolution of $K^R$ over $Q\{X\}$. Let $F^e\colon Q\{X\}\to Q\{X\}$ be the Frobenius map applied degreewise. At this point we have constructed the top two squares of the following commutative diagram of simplicial $Q$-algebras:
\[
\begin{tikzcd}
R\ar[r,"F ^e"] & R \ar[r] & K^R \\
Q\{X\}\ar[u,"\simeq" swap, "\rho"] \ar[r,"F ^e"] \ar[drrr, "\Phi" swap] \ar[d,"\varepsilon" swap] 
 & Q\{X\} \ar[u,"\simeq", "\rho" swap] \ar[r] 
 & Q\{X,Y\} \ar[u,"\simeq" swap] \ar[r,twoheadrightarrow,"\simeq"] 
 & k\{X\} \ar[d, "\simeq", "\alpha" swap] \\
k \ar[rrr,dotted, "\Psi" swap]& & & \mathcal{A}
\end{tikzcd}
\]
The equivalence $Q\{X,Y\}\xra{\simeq} k\{X\}$ is obtained by applying $Q\{X\}\otimes_Q-$ to the equivalence $Q\{Y\}\xra{\simeq} k$. Next we construct the lower part of the diagram.

Let $J$ be the kernel of the canonical augmentation $k\{X\}\to k$; this is a simplicial ideal in $k\{X\}$ and satisfies $\hh_i(J)=0$ for $i\le 0$. Since $k\{X\}$ is free, as a simplicial $k$-algebra, for each integer $n\ge 0$ one has $\hh_i(J^{n+1})=0$ for $i\le n$, by Quillen's theorem~\cite[Theorem~6.12]{Quillen:1970}. In particular the surjection $ k\{X\} \to k\{X\}/J^{n+1}$ is bijective in homology in degrees $\le n$. Set $c\colonequals \codepth R$;
thus
\[
\hh_i(k\{X\})\cong \hh_i(K^R)=0 \quad\text{for $i>c$.}
\]
Let $k\{X\}/J^{c+1}\to \mathcal{A}$ be a map of simplicial rings such that the induced map in homology is bijective in degrees $\le c$ and $\hh_i(\mathcal A)=0$ for $i\ge c+1$; see~\cref{ch:postnikov}. The map $\alpha$ in the diagram is the composition of maps
\[
k\{X\} \longrightarrow k\{X\}/J^{c+1} \longrightarrow \mathcal{A}
\]
By construction $\hh(\alpha)\colon \hh(k\{X\})\to \hh(\mathcal{A})$ is an isomorphism; that is to say, $\alpha$ is an equivalence. The map $\Phi\colon Q\{X\}\to\mathcal A$ is defined to ensure that the upper triangle commutes.
Let $\varepsilon \colon Q\{X\}\to k$ be the canonical augmentation and set $I\colonequals \Ker(\varepsilon)$. Since $p^e\ge c+1$, the composition of maps
\[
Q\{X\}\xrightarrow{\ F ^e\ } Q\{X\}\longrightarrow Q\{X,Y\}\longrightarrow k\{X\}
\]
takes $I$ into $J^{c+1}$, so $\Phi$ factors through $\varepsilon$ yielding the map $\Psi$ in the diagram. This completes the construction of the commutative diagram. Observe that $k\{X\}$, and hence also $\mathcal A$ is a simplicial $k$-algebra, and that the map $\Psi\colon k\to \mathcal A$ is the composition of $F ^e\colon k\to k$ with the structure map $k\to \mathcal A$.

At this point, we complete the proof under the additional assumption that the $R$-complex $M$ satisfies $\hh_i(M)=0$ for $i<0$, for we can then work entirely in the simplicial context, given the Dold-Kan equivalence between non-negatively graded $R$-complexes and simplicial $R$-modules; see \cref{ch:dold-kan} and \cref{ch:change-of-rings}. To tackle the case of a general $M$, one can apply the normalization functor and get a commutative diagram of differential graded algebras and argue as below, but in the context of differential graded algebras and modules.

Since $k$ is a field, any simplicial $k$-module, is formal, in the sense of \cref{ch:formality}. Thus, for any simplicial $\mathcal A$-module $\mathcal M$, the simplicial $R$-module $U\colonequals \rho^*\varepsilon_*\Psi_*(\mathcal M)$ is formal. It thus follows from the commutative diagram above that for any simplicial $K^R$-module $\mathcal N$, the simplicial $R$-module $F ^e_*(\mathcal N)$ is formal. This applies in particular to the simplicial model for $K^M$, and so the argument is complete.

As to the last part, if $\hh(K^M)\ne 0$, then $\hh(F ^e_*(K^M)) \cong F ^e_* \hh(K^M)$, has $F ^e_*k$, and hence $k$, as a direct summand.
\end{proof}

The essence of the argument above is the existence of the commutative diagram of simplicial algebras, and this can be paraphrased as follows.

\begin{proposition}
\label{pr:nilpotence}
Let $R$ be a noetherian local ring of prime characteristic $p$. For any natural number $e> \log_p(\codepth R)$, in the homotopy category of simplicial commutative algebras, the composition 
\[
R\xra{F ^e}R\to K^R
\]
factors through the natural surjection $R\to k$ to the residue field. \qed
\end{proposition}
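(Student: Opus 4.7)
The plan is to extract the factorization directly from the commutative diagram built in the proof of \cref{th:nilpotence}: the proposition is essentially the homotopy-theoretic content of the top triangle of that diagram, read in the category of simplicial commutative algebras. First reduce to the case where $R$ is complete in the $\fm$-adic topology, which is harmless because $\codepth R$ is invariant under completion and $K^{(-)}$ and Frobenius are compatible with $R\to\widehat R$. Fix a minimal Cohen presentation $Q\twoheadrightarrow R$ and set $c \colonequals \codepth R$, so that $\hh_i(K^R)=0$ for $i>c$.

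Next reproduce the same simplicial model. Take simplicial free resolutions $Q\{X\}\xra{\simeq}R$ and $Q\{Y\}\xra{\simeq}k$, and form $Q\{X,Y\}\colonequals Q\{X\}\otimes_Q Q\{Y\}$; this is a simplicial free resolution of $K^R$ over $Q\{X\}$, and base change along $Q\{Y\}\xra{\simeq}k$ identifies it with $k\{X\}$. Let $I$ be the augmentation ideal of $Q\{X\}$ and $J$ that of $k\{X\}$. The key input is Quillen's theorem \cite[Theorem~6.12]{Quillen:1970}, which gives $\hh_i(J^{n+1})=0$ for $i\le n$ on the free simplicial $k$-algebra $k\{X\}$. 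Combined with the vanishing $\hh_i(k\{X\})=0$ for $i>c$, this shows that the surjection $k\{X\}\to k\{X\}/J^{c+1}$ is a quasi-isomorphism, and a Postnikov truncation (\cref{ch:postnikov}) then produces a simplicial $k$-algebra $\mathcal A$ together with an equivalence $k\{X\}\xra{\simeq}\mathcal A$ factoring through $k\{X\}/J^{c+1}$.

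The decisive step is then purely mechanical: since $p^e\ge c+1$, the degreewise Frobenius on $Q\{X\}$ satisfies $F^e(I)\subseteq I^{p^e}\subseteq I^{c+1}$, so the composition
\[
Q\{X\}\xra{F^e}Q\{X\}\longrightarrow Q\{X,Y\}\simeq k\{X\}\longrightarrow k\{X\}/J^{c+1}
\]
annihilates $I$ and hence factors through the augmentation $Q\{X\}/I=k$ via a map $\Psi\colon k\to \mathcal A$. Composing $\Psi$ with the equivalences $\mathcal A\simeq k\{X\}\simeq Q\{X,Y\}\simeq K^R$ (all in $\mathrm{Ho}$ of simplicial commutative algebras) and pulling back along $Q\{X\}\xra{\simeq}R$ yields the desired factorization of $R\xra{F^e}R\to K^R$ through the residue map $R\to k$. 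The only point requiring care, and the modest obstacle, is the bookkeeping needed to verify that after inverting the equivalences in the diagram the resulting map $k\to K^R$ really does precompose with $R\to k$ to give the structural map $R\to K^R$; this is exactly the commutativity of the diagram in the proof of \cref{th:nilpotence}, so no additional work is required.
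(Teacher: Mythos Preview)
Your proposal is correct and follows exactly the paper's approach: the proposition is nothing more than the commutative diagram constructed in the proof of \cref{th:nilpotence}, read in the homotopy category of simplicial commutative algebras, and you have reproduced that construction faithfully. One minor slip worth noting---the surjection $k\{X\}\to k\{X\}/J^{c+1}$ is only bijective on $\hh_i$ for $i\le c$, not a quasi-isomorphism outright (which is precisely why the Postnikov truncation to $\mathcal A$ is required)---but since you invoke that truncation anyway, the argument is unaffected.
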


Given \cref{th:nilpotence}, and more so the result above, it follows that any property of the ring that can be characterized in terms of some homological property of $k$ can often be characterized in terms of the corresponding homological property of the Frobenius pushforwards, $F ^e_*M$, for any $M$ with the maximal ideal in its support. Kunz's characterization of regularity~\cite[Theorem~2.1]{Kunz:1969} of $R$ is the prototypical such result, where $M=R$. See also \cref{co:kunz}.
While some of these consequences can be deduced using results already available in the literature, in particular those  in \cite{Avramov/Iyengar/Miller:2006} or \cite{Koh/Lee:1998}, the approach outlined above lays bare the underlying phenomenon.

\section{Schemes}
\label{se:generation}

In this section we build on the results in \cref{se:nilpotence} to establish statements concerning generation of bounded derived categories; see \cref{th:strong-generation}. The main new input is the local-to-global principle from \cref{se:local-global}. To state our result we extend the definition of codepth for a local ring from ~\cref{se:nilpotence} to any noetherian scheme $X$ as follows:
\[
\codepth X \colonequals\sup\{\codepth {\mco}_{x} \mid x\in X\}\,.
\]
When $X$ is quasi-compact, excellent and of finite Krull dimension, there is bound on the embedding dimensions of the local rings $\mco_x$ and so also on $\codepth X$; this follows from \cite[Proposition~5.2]{Schoutens:2013}. However we get no concrete bounds from this source. We would like to have such a bound, at least for $F$-finite schemes; for this see ~\cref{le:codepth} below. To begin with, we record the following observation that means that for a local ring the codepth in the sense above is the same as the one introduced earlier.

\begin{lemma}
For any noetherian local ring $R$, one has $\codepth R \ge \codepth R_\fp$ for any prime ideal $\fp$ in $R$. 
\end{lemma}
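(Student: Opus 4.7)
The plan is to apply the Auslander--Buchsbaum formula to a minimal Cohen presentation of (the completion of) $R$ and then localise at an appropriate prime.

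Assume first that $R$ is complete, and let $\pi \colon Q \twoheadrightarrow R$ be a minimal Cohen presentation, so $Q$ is a regular local ring with $\dim Q = \edim R$ and $\pdim_Q R = \codepth R$ by the equality case of \cref{eq:codepth-pdim}. Given $\fp \in \spec R$, let $\fq \colonequals \pi^{-1}(\fp)$; this is a prime of $Q$, and $Q_\fq$ is again regular local. Localising $\pi$ at these primes produces a surjection $Q_\fq \twoheadrightarrow R_\fp$, and applying \cref{eq:codepth-pdim} to it yields $\codepth R_\fp \le \pdim_{Q_\fq} R_\fp$. Further, localising a finite free $Q$-resolution of $R$ at $\fq$ gives a finite free $Q_\fq$-resolution of $R_\fp$, whence $\pdim_{Q_\fq} R_\fp \le \pdim_Q R = \codepth R$. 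Chaining these inequalities settles the complete case.

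To reduce the general case to this one, use that $\codepth R = \codepth \hat R$ (since $\fm$-adic completion preserves both $\edim$ and $\depth$) and choose a prime $\hat\fp \in \spec \hat R$ minimal over $\fp\hat R$; by going-down, $\hat\fp \cap R = \fp$, and the closed fibre $\hat R_{\hat\fp}/\fp\hat R_{\hat\fp}$ is artinian. The complete case applied to $\hat R$ at $\hat\fp$ gives $\codepth \hat R_{\hat\fp} \le \codepth \hat R = \codepth R$, so it suffices to compare $\codepth R_\fp$ with $\codepth \hat R_{\hat\fp}$. Here one exploits that $R_\fp \to \hat R_{\hat\fp}$ is faithfully flat local with artinian (depth-zero) closed fibre to conclude $\depth R_\fp = \depth \hat R_{\hat\fp}$, and the minimality of $\hat\fp$ to control the embedding dimension, giving $\codepth R_\fp \le \codepth \hat R_{\hat\fp}$.

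The main obstacle I expect is this final comparison. A general faithfully flat local extension with artinian fibre can strictly decrease both embedding dimension and codepth (e.g.\ the normalisation $k[[t^2,t^3]] \hookrightarrow k[[t]]$), so one cannot invoke a purely formal flat-local-extension formula; rather, the proof must use that the map $R_\fp \to \hat R_{\hat\fp}$ arises from an $\fm$-adic completion together with the minimality of $\hat\fp$ over $\fp\hat R$.
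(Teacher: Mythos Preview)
Your approach is essentially identical to the paper's: first treat the case where $R$ has a minimal Cohen presentation via the chain $\codepth R = \pdim_Q R \ge \pdim_{Q_\fq} R_\fp \ge \codepth R_\fp$, then reduce the general case by completing and choosing $\hat\fp$ minimal over $\fp\hat R$, comparing depth and embedding dimension of $R_\fp$ and $\hat R_{\hat\fp}$ separately.

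The step you flag as an obstacle is exactly the one the paper singles out, and it resolves it by citing Lech (1964): for the faithfully flat local map $R_\fp \to \hat R_{\hat\fp}$ one has $\edim \hat R_{\hat\fp} \ge \edim R_\fp$. Your cautionary example $k[[t^2,t^3]] \hookrightarrow k[[t]]$ is not actually a faithfully flat local extension (the normalisation of a non-normal one-dimensional local domain is never flat), so it does not obstruct the inequality; in fact the inequality $\edim A \le \edim B$ holds for any flat local homomorphism $A\to B$, which is what Lech's result provides. The depth equality is handled, as you say, by flatness plus the zero-dimensional fibre (the paper cites \cite[Proposition~1.2.16]{Bruns/Herzog:1998}). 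With that citation in hand your argument is complete and matches the paper's line for line.
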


\begin{proof}
The desired statement is clear when $R$ has a Cohen presentation $Q\to R$, which one can arrange to be minimal, for then \eqref{eq:codepth-pdim} yields (in)equalities
\[
\codepth R = \pdim_QR \ge \pdim_{Q_\fp}(R_\fp) \ge \codepth R_\fp\,,
\]
which is the desired inequality. 

We reduce to this case by passing to the completion, $\widehat R$, of $R$ at its maximal ideal. Pick a prime ideal, say $\fp'$ in $\spec\widehat R$ lying over $\fp$ and minimal with that property. One then has
\[
\edim (\widehat R_{\fp'}) \ge \edim R_\fp \quad\text{and}\quad \depth (\widehat R_{\fp'}) = \depth R_\fp\,,
\]
where the inequality holds by, for example, \cite{Lech:1964} and the equality by \cite[Proposition~1.2.16]{Bruns/Herzog:1998}. This gives the last of the following inequalities
\[
\codepth R = \codepth \widehat R \ge \codepth (\widehat R_{\fp'}) \ge \codepth R_{\fp}\,.
\]
The equality is clear, whereas the first inequality is by the discussion in the previous paragraph, for $\widehat R$ has a Cohen presentation.
\end{proof}

In the statement below, for any finitely generated $R$-module $M$ we write $\beta^R(M)$ for the minimal number of elements required to generate $M$. When $R$ contains a field of positive characteristic, we say $R$ is \emph{$F$-finite} if the Frobenius endomorphism $F\colon R\to R$ is a finite map. We use often the observation that quotients, localizations, and completions of $F$-finite noetherian rings are $F$-finite, and that the Frobenius endomorphism commutes with localization and completion.

\begin{lemma}
\label{le:codepth}
Let $R$ be a noetherian ring.
\begin{enumerate}[\quad\rm(1)]
\item When $R$ is a quotient of a regular ring $Q$, one has
\[
\codepth R\le \pdim_QR<\infty\,.
\]
\item
When $R$ contains a field of prime characteristic and is $F $-finite one has
\[
\edim R_\fp \le \beta^R(F _*R)\quad\text{for each $\fp\in\spec R$.}
\]
In particular, $\codepth R<\infty$.
\end{enumerate}
\end{lemma}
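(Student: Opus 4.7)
The plan is to prove each part by reducing to the complete local setting.

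For (1), fix $\fp \in \spec R$ and let $\fq \in \spec Q$ be its preimage. Since $Q_\fq$ is regular local and the induced surjection $Q_\fq \twoheadrightarrow R_\fp$ is a Cohen presentation, the local inequality \eqref{eq:codepth-pdim}, combined with flat base change, gives
\[
\codepth R_\fp \le \pdim_{Q_\fq} R_\fp \le \pdim_Q R\,.
\]
Taking the supremum over $\fp$ yields $\codepth R \le \pdim_Q R$, and the Auslander--Buchsbaum--Serre theorem, applied to each localization of $Q$, ensures $\pdim_Q R < \infty$.

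For (2), the ``in particular'' statement is immediate once the main inequality is established, since $\codepth R_\fp \le \edim R_\fp$ for every prime $\fp$. To prove $\edim R_\fp \le \beta^R(F_*R)$, I first reduce to the complete local case. Because Frobenius commutes with localization and with completion in the $F$-finite setting, and because the minimal number of generators can only decrease under localization and is unchanged by completion, one has
\[
\beta^R(F_*R)\;\ge\;\beta^{R_\fp}\bigl(F_*(R_\fp)\bigr)\;=\;\beta^{\widehat{R_\fp}}\bigl(F_*(\widehat{R_\fp})\bigr)\,,
\]
while $\edim \widehat{R_\fp} = \edim R_\fp$. So it suffices to treat a complete $F$-finite local ring $(R,\fm,k)$ with $\edim R = n$.

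In this setting, $F_*R/\fm F_*R$ equals $R/\fm^{[p]}$ as an abelian group, where $\fm^{[p]}$ is the ideal generated by the $p$-th powers of elements of $\fm$. Since $p\ge 2$, one has $\fm^{[p]} \subseteq \fm^2$, so the surjection $R/\fm^{[p]} \twoheadrightarrow R/\fm^2$ shows that the images of $1,x_1,\dots,x_n$ (for any minimal generating set $x_1,\dots,x_n$ of $\fm$) are linearly independent in $R/\fm^{[p]}$ over the usual $k$-action. A direct check then shows that they remain linearly independent over $k$ in $F_*R/\fm F_*R$ when $k$ acts via Frobenius: if $\sum a_i \cdot \overline{x_i} = 0$ in the twisted action, then $\sum a_i^p x_i \in \fm^{[p]} \subseteq \fm^2$, whence each $a_i^p = 0$ and so $a_i = 0$ since $k$ is a field. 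Consequently $\beta^R(F_*R) \ge n+1 > n = \edim R$, as required.

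The main obstacle I anticipate is the bookkeeping in (2): one must carefully distinguish the standard $R$-module structure on $R/\fm^{[p]}$ from the one obtained via Frobenius, and verify that the minimal number of generators of $F_*R$ behaves correctly under both localization and completion. Once these are in place, the inclusion $\fm^{[p]} \subseteq \fm^2$ carries out the rest.
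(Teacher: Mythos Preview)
Your proof is correct. Part (1) is essentially the paper's argument; the only quibble is that Auslander--Buchsbaum--Serre at each localization gives $\pdim_{Q_\fq} R_\fq < \infty$ pointwise but not a global bound---the paper invokes Bass--Murthy (local-to-global for perfection) to close this gap.

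For (2) the paper takes a slightly different route: it stays local (no completion) and reads off
\[
\beta^R(F_*R)=\rank_k F_*(R/\fm^{[p]})\ge\rank_k(F_*k)\cdot\rank_k(R/\fm^{[p]})\ge\rank_k(\fm/\fm^2)
\]
via a composition series of $R/\fm^{[p]}$ with subquotients $k$. Your approach of exhibiting $1,x_1,\dots,x_n$ as linearly independent in $F_*R/\fm F_*R$ also works, and in fact your passage to the completion is unnecessary: the independence check only uses that the image of any lift $\tilde a_i^{\,p}$ in $k$ is $a_i^p$, and this holds over any local ring without choosing a coefficient field. (Your displayed check also omits the $\overline 1$ term, so as written it gives $\beta\ge n$ rather than $n+1$; but $n=\edim R$ is already the bound you need.)
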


\begin{proof}
(1) When $R$ is a quotient of a regular ring $Q$ one has inequalities
\[
\codepth R_\fp \le \pdim_{Q_{\fp\cap Q}}R_\fp\le \pdim_Q R<\infty\,,
\]
for each prime ideal $\fp$ of $R$, where the first one is by \eqref{eq:codepth-pdim}, and the finiteness of $\pdim_QR$ is by \cite{Bass/Murthy:1967}. Thus $\codepth R$ is finite, and bounded above by $\pdim_Q R$. 

(2) We can deduce this from (1) for any $F $-finite ring is a quotient of an $F$-finite regular ring, as was proved by Gabber~\cite[Remark 13.6]{Gabber:2004}. Here is a direct argument, based on the proof of \cite[Proposition 1.1]{Kunz:1976}.

Since the number of generators does not go up under localization, the desired result is that when $(R,\fm,k)$ is a noetherian local ring $\beta^R(F _*R)\ge \edim R$. This follows from the computation:
\begin{align*}
\beta^R(F _*R) 
 & = \rank_k (k\otimes_R F _*R) \\
 & = \rank_k (F _*(R/\fm^{[p]}))\\
 &\ge \rank_k(F _*k)\rank_k(R/\fm^{[p]}) \\
 & \ge \rank_k(\fm/\fm^2)\,. 
\end{align*}
The (in)equalities are all standard.
\end{proof}

The preceding result gives one family of schemes whose codepth is finite. This family includes schemes that may have infinite Krull dimension.

\begin{corollary}
\label{co:finite_codpeth}
If $X$ is a scheme essentially of finite type over a regular scheme, then $\codepth X$ is finite. \qed
\end{corollary}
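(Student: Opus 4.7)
The plan is to reduce the claim to \cref{le:codepth}(1) via an affine-local argument. First, using the definition of essentially of finite type, I would cover $X$ by affine opens $U_i = \spec R_i$ such that each restriction $f|_{U_i}$ factors through an affine open $V_i = \spec S_i$ of $Y$ and $R_i$ is a localization of a finitely generated $S_i$-algebra $S_i[t_1, \ldots, t_{n_i}]/I_i$. Since $Y$ is regular, each $S_i$ is regular, and therefore so is the polynomial ring $S_i[t_1, \ldots, t_{n_i}]$ together with every one of its localizations. Letting $Q_i$ denote the appropriate localization of $S_i[t_1, \ldots, t_{n_i}]$, this realizes $R_i$ as a quotient of the regular ring $Q_i$.

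Next, \cref{le:codepth}(1) applied to $Q_i \surj R_i$ would yield $\codepth R_i \le \pdim_{Q_i} R_i < \infty$, and this bound controls every individual stalk: for $x \in U_i$ corresponding to $\fp \in \spec R_i$ with preimage $\mathfrak{P} \in \spec Q_i$, the stalk $\mco_{X,x} = (R_i)_\fp$ is a quotient of the regular local ring $(Q_i)_\mathfrak{P}$, and localizing a $Q_i$-projective resolution of $R_i$ gives
\[
\codepth \mco_{X,x} \le \pdim_{(Q_i)_\mathfrak{P}} \mco_{X,x} \le \pdim_{Q_i} R_i.
\]
Thus the single finite number $\pdim_{Q_i} R_i$ dominates $\codepth \mco_{X,x}$ uniformly for all $x$ in the patch $U_i$.

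The main obstacle will be passing from these patchwise bounds to a uniform global bound. When $X$ is quasi-compact a finite subcover suffices, and taking the maximum of the finitely many bounds $\pdim_{Q_i} R_i$ immediately gives $\codepth X < \infty$. In greater generality one would need an additional argument---for instance, a global bound on the relative dimension of $f$ together with control on the embedding dimensions of the regular local rings of $Y$---to ensure the patchwise bounds themselves remain uniformly bounded across the cover.
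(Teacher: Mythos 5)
Your argument is correct and is exactly what the paper intends for this unproved corollary: reduce to a finite affine cover, realize each coordinate ring as a quotient of a regular ring (a localization of a polynomial ring over a section of the regular base), and invoke \cref{le:codepth}(1). The concern raised in your final paragraph does not in fact arise, because in the paper's conventions schemes are noetherian---indeed $\codepth X$ is only defined for noetherian $X$---and a scheme essentially of finite type over a noetherian regular scheme is itself noetherian, hence quasi-compact, so a finite affine cover always exists and the maximum of the finitely many bounds $\pdim_{Q_i} R_i$ gives $\codepth X < \infty$.
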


For the present, the more pertinent result is the one below.

\begin{proposition}
\label{pr:codepth-scheme}
If $X$ is a noetherian $F$-finite scheme, then $\codepth X$ is finite. 
\end{proposition}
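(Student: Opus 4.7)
The plan is to reduce from the scheme $X$ to finitely many affine pieces, each of which is $F$-finite, and then invoke Lemma~\ref{le:codepth}(2).

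First I would use that a noetherian scheme is quasi-compact, so I can cover $X$ by finitely many affine open subschemes $U_i = \spec R_i$ with $i = 1,\dots, n$. As noted in the paragraph preceding Lemma~\ref{le:codepth}, $F$-finiteness is preserved under passage to affine opens (more generally, quotients and localizations), so each $R_i$ is an $F$-finite noetherian ring. By Lemma~\ref{le:codepth}(2) we then have
\[
\codepth R_i \leq \beta^{R_i}(F_*R_i) < \infty
\]
for each $i$.

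Next I would pass from affines to stalks. Every point $x \in X$ lies in some $U_i$, in which case $\mco_x$ is a localization of $R_i$ at the prime ideal $\fp \subset R_i$ corresponding to $x$. Applying the Lemma immediately preceding Lemma~\ref{le:codepth} (which states that $\codepth R \geq \codepth R_\fp$ for any noetherian local ring $R$ and prime $\fp \subset R$), I would deduce
\[
\codepth \mco_{x} = \codepth (R_i)_\fp \le \codepth R_i\,.
\]
Strictly speaking that lemma is stated for local $R$, but one first localizes $R_i$ at a maximal ideal containing $\fp$ (giving a local ring whose codepth is bounded by $\codepth R_i$, by the same lemma applied to a minimal Cohen presentation of $R_i$ at that maximal ideal, or by a direct application after passing to a further localization). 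Taking the supremum over $x \in X$ yields
\[
\codepth X \le \max_{1 \le i \le n} \beta^{R_i}(F_*R_i) < \infty\,,
\]
which is the desired finiteness.

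There is no real obstacle here; the only subtlety is making sure the bound $\codepth R \geq \codepth R_\fp$ is applied correctly when $R$ is not itself local, which is handled cleanly by first localizing at a maximal ideal and then quoting the previous lemma. The proof is essentially a bookkeeping exercise combining quasi-compactness with the affine bound from Lemma~\ref{le:codepth}(2).
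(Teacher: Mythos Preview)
Your proposal is correct and follows essentially the same approach as the paper: reduce to a finite affine cover by quasi-compactness, then apply \cref{le:codepth}(2). Your worry about the localization step is unnecessary, since the ``In particular, $\codepth R<\infty$'' clause of \cref{le:codepth}(2) already bounds $\codepth R_\fp$ uniformly in $\fp$ (as $\codepth R_\fp\le\edim R_\fp\le\beta^R(F_*R)$), so no separate invocation of the preceding lemma is needed.
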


\begin{proof}
Since $X$ admits a finite open affine cover, and $\codepth X$ is computed locally, we can assume $X$ is affine. The desired result follows from \cref{le:codepth}(2).
\end{proof}
 
These considerations are pertinent to the result below; it contains \cref{ith:general-schemes}. See \cref{se:local-global} for notation and terminology.

\begin{theorem}
\label{th:strong-generation}
Let $X$ be an $F$-finite noetherian scheme over a field of prime characteristic $p$. Fix a natural number $e> \log_p(\codepth X)$. If $E,G$ belong to $\dbcat{\coh X}$ with $\supp_XE\subseteq \supp_XG$, then 
\[
E\text{ is in }\thick^{\odot}_{\dcat X}(F_*^e{G})\,.
\]
Hence, if $\supp_X G=X$, then $F_*^eG$ generates $\dbcat{\coh X}$ as a $\perf X$-module.
\end{theorem}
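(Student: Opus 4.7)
The plan is to reduce via the local-to-global principle \cref{th:local-global} to a pointwise statement, and then apply the nilpotence result \cref{th:nilpotence}. By \cref{th:local-global}, it suffices to show that for each $x\in X$,
\[
\res{E}x\in \thick_{\dcat{\mco_x}}\bigl((F^e_*G)_x\bigr)\,.
\]
Since the Frobenius endomorphism commutes with localization one has $(F^e_*G)_x\cong F^e_*(G_x)$ in $\dcat{\mco_x}$, where on the right $F^e_*$ denotes restriction along $F^e\colon \mco_x\to \mco_x$.

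Fix $x\in X$. If $x\notin \supp_X G$, the support hypothesis forces $x\notin \supp_X E$, so both stalks are acyclic and $\res{E}x\simeq 0$ trivially. Assume therefore $x\in \supp_X G$, so that $\hh(G_x)\ne 0$. Let $K$ be the Koszul complex on a minimal generating set for the maximal ideal $\fm$ of $\mco_x$; a standard hypercohomology / Nakayama argument applied to the top nonzero cohomology of $G_x$ shows $\hh(K\otimes_{\mco_x}G_x)\ne 0$. Since $\codepth \mco_x\le \codepth X$ and $e>\log_p(\codepth X)$, \cref{th:nilpotence} applied to the local ring $\mco_x$ with $M\colonequals G_x$ yields
\[
k(x)\in \thick_{\dcat{\mco_x}}\bigl(F^e_*(K\otimes_{\mco_x}G_x)\bigr)\,.
\]
Perfectness of $K$ gives $K\otimes_{\mco_x}G_x\in \thick(G_x)$, and the exact functor $F^e_*$ therefore carries it into $\thick(F^e_*(G_x))$; hence $k(x)\in \thick(F^e_*(G_x))$.

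Finally, $\res{E}x = E_x\otimes_{\mco_x}K$ has cohomology annihilated by $\fm$ and hence of finite length; any such bounded complex is finitely built from $k(x)$ by iterated extensions, so $\res{E}x\in \thick(k(x))\subseteq \thick(F^e_*(G_x))$. This verifies the hypothesis of \cref{th:local-global} and concludes the proof of the first assertion; the second is the case $E$ arbitrary (noting $\supp_X E\subseteq X=\supp_X G$). The one genuinely non-formal step---the main obstacle---is the invocation of \cref{th:nilpotence}; its proof via simplicial algebras and Postnikov towers is the new input. Everything else is a formal combination of the compatibility of Frobenius with localization, the perfectness of the Koszul complex, and the local-to-global principle.
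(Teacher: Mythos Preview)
Your proof is correct and follows essentially the same route as the paper's: reduce via \cref{th:local-global} to a stalkwise statement, dispose of points outside $\supp_X G$ trivially, and for $x\in\supp_X G$ invoke \cref{th:nilpotence} to get $k(x)\in\thick(F^e_*(K^{G_x}))\subseteq\thick(F^e_*(G_x))$, then observe that $\res{E}x$ has finite-length cohomology. If anything, you are slightly more explicit than the paper in two places: you spell out why $\hh(K\otimes_{\mco_x}G_x)\ne 0$, and you make visible the passage from $\thick(F^e_*(K^{G_x}))$ to $\thick(F^e_*(G_x))$ via perfectness of $K$ and exactness of $F^e_*$.
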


\begin{proof}
By \cref{th:local-global} it suffices to check that $\res{E}x$ is in the thick subcategory of $\dcat{\mco_{x}}$ generated by $(F _*^eG)_x$ for $x\in X$. Fix a point $x\in X$.

If $x\notin \supp_XG$, then $\supp_XE\subseteq \supp_XG$ implies $E_x\cong 0$, hence also that $\res{E}x\cong 0$, so the desired inclusion is clear.

Suppose $x\in \supp_XG$. Set $R\colonequals \mco_{x}$, let $k$ be its residue field, and set $M\colonequals G_x$; thus $M\not\cong 0$ in $\dcat R$. By the definition of $\codepth X$ one has $p^e> \codepth R$ so \cref{th:nilpotence} applied to $M$ yields that $k$ is in the thick subcategory generated by $F ^e_*(K^M)$, and since $F ^e_*(K^M)$ is finitely built by $F^e_*(M)$, it  follows that $k$ is finitely build by $F^e_*(M)$. This implies that any $R$-complex with homology of finite length, and in particular $\res{E}x$, is in that thick subcategory as well. It remains to observe that $F ^e_*(M)\cong F ^e_*(G)_x$, as $R$-complexes. 
\end{proof}

Next we establish a result akin to \cref{th:strong-generation} that deals with the case when $X$ is not necessarily $F $-finite. It concerns the homotopy category of injectives that appeared already in \cref{ch:kinj-X}. We need also the functor
\[
\bfi \colon \dcat X \longrightarrow \kinj X
\]
that assigns to each complex its injective resolution; see \cite[Section 4.3]{Krause:2022}. 

\begin{theorem}
\label{th:kinj-generation}
Let $X$ be a noetherian scheme over a field of prime characteristic $p$. Fix a natural number $e> \log_p(\codepth X)$ and a complex $G$ in $\dcat{X}$ with $\hh(G)$ bounded and $\supp_XG=X$. As $\perf X$-modules one has
 \[
\Loc^{\odot}_{\cat K(X)}(\bfi F ^e_*G)=\kinj X\,.
 \]
\end{theorem}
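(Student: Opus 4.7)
The plan is to invoke the local-to-global principle \eqref{eq:local-global} for $\kinj X$ regarded as a module over $\perf X$, thereby reducing the global statement to a stalkwise claim that will follow from \cref{th:nilpotence}. Fix $x\in X$ and set $R\colonequals\mco_x$ and $k\colonequals k(x)$. By \eqref{eq:local-global}, together with the remark that follows it (which allows one to replace $\varGamma_x\bfi F^e_*G$ by its stalk $(\bfi F^e_*G)_x$ in the localizing subcategory on the right), it suffices to show that for every $E\in\kinj X$ one has
\[
\varGamma_x E\in\Loc_{\cat K(R)}(\bfi F^e_*(G_x))\,;
\]
here we identify $(\bfi F^e_*G)_x$ with a K-injective resolution of the stalk $F^e_*(G_x)$ in $\kinj R$.

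The hypotheses on $G$ and $e$ transfer to the stalk: $\hh(G_x)\ne 0$ because $\supp_X G=X$, and $p^e>\codepth X\ge\codepth R$. Applying \cref{th:nilpotence} to $M\colonequals G_x$ gives $k\in\thick_{\dcat R}(F^e_*(K\otimes_R G_x))$. Since $K$ is a bounded complex of finitely generated free $R$-modules, $K\otimes_R G_x$ lies in $\thick_{\dcat R}(G_x)$; applying the exact functor $F^e_*$ then yields $F^e_*(K\otimes_R G_x)\in\thick_{\dcat R}(F^e_* G_x)$. Combining, $k$ belongs to the thick, and hence localizing, subcategory of $\dcat R$ generated by $F^e_* G_x$. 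Passing to $\kinj R$ via the injective-resolution functor $\bfi$, which preserves coproducts over the noetherian ring $R$, yields $\bfi k\in\Loc_{\cat K(R)}(\bfi F^e_*(G_x))$.

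It remains to verify that $\varGamma_x E\in\Loc_{\cat K(R)}(\bfi k)$ for every $E\in\kinj X$. The object $\varGamma_x E=\mathbf{R}\varGamma_{\overline{\{x\}}}(E_x)$ lies in the essential image of $\bfi\colon\dcat R\to\kinj R$ and its image in $\dcat R$ has cohomology supported on $\{\fm\}$. By Neeman's classification \eqref{eq:neeman}, such an image belongs to $\Loc_{\dcat R}(k)$, so applying $\bfi$ gives $\varGamma_x E\in\Loc_{\cat K(R)}(\bfi k)\subseteq\Loc_{\cat K(R)}(\bfi F^e_*(G_x))$, as required.

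The main obstacle is the transfer from $\dcat R$ to $\kinj R$: one must check that the support classification \eqref{eq:neeman} lifts along $\bfi$ and that the essential image of $\varGamma_x$ lies inside $\Loc_{\cat K(R)}(\bfi k)$. Both points rely on the preservation of coproducts by $\bfi$ over a noetherian ring, and on the compatibility of $\bfi$ with the tensor-triangulated action of $\dcat R$ on $\kinj R$ described in \cref{ch:kinj-X}.
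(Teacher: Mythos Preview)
Your argument has a genuine gap in the final step. You assert that $\varGamma_x E$ lies in the essential image of $\bfi\colon\dcat R\to\kinj R$ and then pull the question back to Neeman's classification \eqref{eq:neeman} in $\dcat R$. But $\bfi$ is not essentially surjective: its image consists of the K-injective complexes, and the orthogonal complement in $\kinj R$ is the subcategory of acyclic complexes of injectives, which is nontrivial whenever $R$ is singular. For an arbitrary $E\in\kinj X$---for instance a totally acyclic complex of injectives on a singular $X$---the object $\varGamma_x E$ will typically be a nonzero acyclic complex of injective $R$-modules, hence not of the form $\bfi(-)$ for anything in $\dcat R$. Your appeal to \eqref{eq:neeman} therefore does not apply, and the closing remark that ``both points rely on the preservation of coproducts by $\bfi$'' does not address this: coproduct preservation cannot enlarge the essential image.

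What is actually needed is that $\bfi k$ generates, as a localizing subcategory of $\kinj R$, \emph{all} $\fm$-supported objects, including the acyclic ones. The paper secures this by a Hom-vanishing argument in $\kinj R$ itself: from \cref{th:nilpotence} one gets that every finite-length module $L$ lies in $\thick_{\dcat R}(F^e_*G_x)$, hence $\bfi L$ lies in $\thick_{\cat K(R)}(\bfi F^e_*G_x)$; since the $\bfi L$ are compact in $\kinj R$ via \eqref{eq:kinjc}, one then invokes Krause's lemma for the locally noetherian Grothendieck category of $\fm$-power torsion modules to conclude that any $\fm$-supported $M\in\kinj R$ with $\Hom^*_{\cat K}(\bfi L,M)=0$ for all such $L$ is zero. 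That compact-generation step is precisely what handles the acyclic complexes you cannot reach through $\bfi$, and it is the missing ingredient in your proposal.
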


This statement is vacuous when the codepth of $X$ is infinite. A construction of Hochster, see \cite[Proposition~1]{Hochster:1973}, produces affine noetherian schemes of infinite codepth. However, as noted before, codepth is finite for excellent schemes of finite Krull dimension. See also \cref{co:finite_codpeth}.

\begin{proof}
By the local-to-global principle \cref{eq:local-global} it suffices to verify that for each $x$ in $X$ the localizing $\perf X$-submodule of $\kinj X$ generated by $(\bfi F ^e_*G)_x$ contains the objects of $\kinj X$ supported at $x$. Moreover, since $\hh^i(G)=0$ for $|i|\gg 0$, one gets the first isomorphism below: 
\[
(\bfi F ^e_*G)_x \cong \bfi((F ^e_*G)_x)\cong \bfi F ^e_*(G_x)\,.
\]
The second one is standard. Thus passing to the local ring at $x$, we arrive in the situation where $R\colonequals \mco_x$ is a noetherian local ring, say with maximal ideal $\fm$, and $G$ is an $R$-complex with $\hh(G)$ bounded and $\supp_RG=\spec R$. The task is to verify that any $R$-complex in $\kinj R$ supported at $\fm$ is in the localizing subcategory of $\kinj R$ generated by $\bfi F ^e_*G$. This is equivalent to checking that if an object $M$ of $\kinj R$ is supported at $\fm$ and satisfies
\[
\Hom_{\cat K}^*(\bfi  F ^e_*G , M)=0
\]
then $M\cong 0$ in $\kinj R$; that is to say, all its syzygy modules are injective. 

Since $K^G$ is finitely built from $G$ in $\dcat R$, it follows that $F ^e_*(K^G)$ is in the thick subcategory of $\dcat R$ generated by $F ^e_*G$. Then, given the lower bound on $e$, \cref{th:nilpotence} yields that $k$, the residue field of $R$, and hence also any $R$-module of finite length, is in the thick subcategory generated by $F ^e_*G$. Since $\bfi(-)$ is an exact functor, we conclude that for each finite length $R$-module $L$, its injective resolution $\bfi L$ is in the thick subcategory of $\kinj R$ generated by $\bfi F ^e_*G$. Thus the condition above implies that
\[
\Hom_{\cat K}^*(\bfi L, M)=0\,,
\]
for all such $L$. The category of $\fm$-power torsion $R$-modules is a locally noetherian Grothendieck category with noetherian objects the finite length $R$-modules. Thus, since $M$ is $\fm$-power torsion, applying \cite[Lemma~2.2]{Krause:2005} we deduce from the condition above that $M\cong 0$ in $\kinj R$; see also \cite[Lemma~6.4.11]{Krause:2022}.
\end{proof}

\begin{chunk}
\label{ch:kinj-remark}
One can deduce \cref{th:strong-generation} from the result above: When $X$ is $F $-finite, if $G$ is in $\dbcat{\coh X}$ so is $F ^e_*G$ and hence $\bfi F ^e_*G$ is compact in $\kinj X$; see \cref{eq:kinjc}. The localizing submodule generated by $\bfi F ^e_*G$ equals $\kinj R$, so by \cref{eq:neeman-tt}
\[
\thick^{\odot}(\bfi F ^e_*G ) = {\kinj X}^{\sfc}\,.
\]
Applying the quotient functor $\boldsymbol{q}$ to the previous equality yields the desired result, given the equivalence~\cref{eq:kinjc}.
\end{chunk}

Next we discuss alternative formulations of the preceding results, in terms of strong generators of $\dbcat{\coh X}$.

\begin{lemma}
\label{le:thick-Frobenius}
Let $X$ be a noetherian scheme over a field of prime characteristic $p$. Let $U\subseteq \perf X$ be such that $\thick_{\dcat X}(U)=\perf X$. For any $G$ in $\dcat X$ and integer $e\ge 0$ one has
\[
\thick^{\odot}_{\dcat X}(F ^e_*G) 
 = \thick_{\dcat X}(F ^e_*(E\otimes G)\mid E\in U)\,.
\]
\end{lemma}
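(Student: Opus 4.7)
The argument has two parts and rests on two inputs: the projection formula $F^e_*(F^{e*}E \otimes G) \simeq E \otimes F^e_*G$ for $E \in \perf X$, and the exactness of the composite triangulated functor $\Phi := F^e_*(-\otimes G)\colon \dcat{X} \to \dcat{X}$.

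For the inclusion $\thick^{\odot}_{\dcat X}(F^e_*G) \subseteq \thick_{\dcat X}(F^e_*(E\otimes G) \mid E \in U)$, I plan to show that the right-hand side is itself a thick $\perf X$-submodule containing $F^e_*G$; minimality of the left-hand side as such a submodule then forces the inclusion. Containment of $F^e_*G = \Phi(\mco_X)$ follows from exactness of $\Phi$ together with $\mco_X \in \thick(U) = \perf X$, since exact functors send thick closures into thick closures. Action-closedness uses the projection formula: for any $E' \in \perf X$ and $E \in U$,
\[
E' \otimes F^e_*(E\otimes G) \;\simeq\; F^e_*(F^{e*}E' \otimes E \otimes G) \;=\; \Phi(F^{e*}E' \otimes E),
\]
and since $F^{e*}E' \otimes E \in \perf X = \thick(U)$, exactness of $\Phi$ puts this object into $\thick_{\dcat X}(\Phi(U))$.

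For the reverse inclusion I must show that $F^e_*(E\otimes G) \in \thick^{\odot}(F^e_*G)$ for each $E \in U$. Introduce the class $\mathcal{C} := \{E \in \perf X : \Phi(E) \in \thick^{\odot}(F^e_*G)\}$. Exactness of $\Phi$ together with the thickness of $\thick^{\odot}(F^e_*G)$ makes $\mathcal C$ a thick subcategory of $\perf X$. Projection gives $\Phi(F^{e*}E') \simeq E' \otimes F^e_*G$, which lies in $\thick^{\odot}(F^e_*G)$ by action-closedness; thus $\mathcal{C} \supseteq F^{e*}(\perf X)$, and in particular $\mathcal C \supseteq \thick_{\perf X}(F^{e*}(\perf X))$. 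The inclusion then reduces to the claim
\[
\thick_{\perf X}(F^{e*}(\perf X)) \;=\; \perf X,
\]
which would yield $\mathcal{C} = \perf X \supseteq U$ and finish the argument.

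The chief obstacle is establishing this Frobenius-generation claim. On an affine open it is immediate, since $F^{e*}\mco_X = \mco_X$ and $\thick(\mco_X)$ is already all of $\perf$ of the affine open. To promote this to the global statement I anticipate combining the local result with a Thomason-style support argument---exploiting that $F^e$ is the identity on the underlying topological space, so $F^{e*}$ preserves supports in the sense of \cref{ch:perfect}---or with the local-to-global principle of \cref{th:local-global} applied after reducing to stalks. This gluing of the local (easy) statement to the global (subtle) one is the piece of the argument I expect to require the most care.
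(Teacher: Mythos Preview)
Your overall structure is sound and matches the paper's: both arguments pivot on the projection formula and on the claim that Frobenius pullbacks of a generating set for $\perf X$ still generate $\perf X$ as a \emph{plain} thick subcategory. The gap lies in the methods you propose for that claim. Thomason's classification \cref{eq:hopkins} controls \emph{tensor-ideal} thick subcategories of $\perf X$ via support, and the local-to-global principle of \cref{th:local-global} lands you in a thick $\perf X$-\emph{submodule} $\thick^{\odot}(-)$; neither says anything about the ordinary thick closure $\thick_{\perf X}(F^{e*}(\perf X))$. Indeed, since $\mco_X = F^{e*}\mco_X$ already sits in $F^{e*}(\perf X)$, any tensor-ideal or $\odot$-submodule containing it is trivially all of $\perf X$, so those tools carry no content here. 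Your affine observation is of the same kind: locally $\thick(\mco_X)=\perf$, but gluing such statements only produces a $\thick^{\odot}$-conclusion.

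The paper settles the claim by passing to the big category $\dcat X$. From $\thick_{\dcat X}(U)=\perf X$ one has $\Loc_{\dcat X}(U)=\dcat X$; the adjunction $(F^{e*},F^e_*)$ then forces $\Loc_{\dcat X}(F^{e*}U)=\dcat X$: if $M$ is right-orthogonal to $F^{e*}c$ for every $c\in U$, then $F^e_*M$ is right-orthogonal to every $c\in U$, so $F^e_*M=0$, whence $M=0$ because $F^e$ is affine and $F^e_*$ is conservative. Since $F^{e*}U$ consists of compact objects, \cref{eq:neeman-cgt} gives $\thick_{\dcat X}(F^{e*}U)=\perf X$, which is precisely your missing claim. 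With it in hand the rest of your argument goes through; the paper in fact compresses both inclusions into a single chain of equalities once this is known.
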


\begin{proof}
It suffices to verify this when $e=1$; this eases up the notation a bit. 

Let $F^*\colon \dcat{X}\to \dcat{X}$ denote pullback along $F$. For any collection of objects $C$ in $\dcat X$ with $\Loc_{\dcat X}(C)=\dcat{X}$, adjunction yields $\Loc_{\dcat X}(F^*C)=\dcat{X}$. From this observation and the fact that $F^*$ restricts to an endofunctor on $\perf{X}$, one can apply \cref{eq:neeman-cgt} to conclude that 
\begin{equation}\label{eq:pullback}
\thick_{\dcat X}(F^*U)=\thick_{\dcat X}(U)=\perf{X}\,.
\end{equation}
Now it remains to observe that 
\begin{align*}
 \thick_{\dcat X}(F _*(E\otimes G)\mid E\in U)&=\thick_{\dcat X}(F _*(F^*(E)\otimes G)\mid E\in U)\\
 &=\thick_{\dcat X}(E\otimes F _*(G)\mid E\in U)\\
 &=\thick^{\odot}_{\dcat X}(F_*G)\,;
\end{align*}
where the first equality is by \cref{eq:pullback}, the second equality is the projection formula
\[
F _*(F ^*(-)\otimes G)\simeq (-)\otimes F _*G
\]
on $\perf{X}$, and the last equality is evident since $\thick_{\dcat X}(U)=\perf{X}$. 
\end{proof}

\subsection*{Strong generation}
Here is our main result concerning the existence of strong generators for the bounded derived category of $X$. By a result of Bondal and Van den Bergh, see \cref{ch:BondalVanDenBergh}, a perfect complex $E$ as in the hypothesis always exists; when $X$ is affine, one can take $G=\mco_X$.

\begin{corollary}
\label{co:strong-generation}
Let $X$ be an $F$-finite noetherian scheme of prime characteristic $p$, and assume $E$ generates $\perf X$. For each $e>\log_p(\codepth X)$ and $G$ in $\dbcat{\coh X}$ with $\supp_XG=X$, the complex $F^e_*(E\otimes G)$ is a generator for $\dbcat{\coh X}$; it is a strong generator when $X$ is separated.
\end{corollary}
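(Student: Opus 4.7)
The plan is to derive this corollary as a direct packaging of the two main inputs established earlier, namely \cref{th:strong-generation} and \cref{le:thick-Frobenius}; the hypotheses of the corollary are arranged precisely so these combine cleanly.

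First I would apply \cref{th:strong-generation} to the complex $G$, which has full support by hypothesis, to conclude that
\[
\thick^{\odot}_{\dcat X}(F^e_* G) \supseteq \dbcat{\coh X}.
\]
Next I would take the singleton $U = \{E\}$ in \cref{le:thick-Frobenius}, which is legitimate because the corollary's assumption that $E$ generates $\perf X$ gives exactly the input the lemma requires. The lemma then yields
\[
\thick^{\odot}_{\dcat X}(F^e_* G) = \thick_{\dcat X}\bigl(F^e_*(E \otimes G)\bigr).
\]
Chaining these two displays produces $\dbcat{\coh X} \subseteq \thick_{\dcat X}(F^e_*(E \otimes G))$, which is the first assertion.

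For the strong generator statement when $X$ is separated, I would invoke the existence of \emph{some} strong generator for $\dbcat{\coh X}$ in this setting. An $F$-finite noetherian scheme is excellent (Kunz), hence quasi-excellent; combined with separatedness and finite Krull dimension, this places $X$ in the framework of Aoki's theorem recalled in the introduction, which supplies a strong generator. As noted in \cref{ch:thick}, once a strong generator exists every classical generator is automatically a strong generator, so $F^e_*(E \otimes G)$ is itself strong.

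There is no serious obstacle in the corollary proper: the hard work has been done by \cref{th:strong-generation}, which supplies module generation via the local-to-global principle \cref{th:local-global} and the Frobenius nilpotence theorem \cref{th:nilpotence}, and by \cref{le:thick-Frobenius}, which converts module generation into classical generation by tensoring against a generator of $\perf X$ and applying the projection formula. The only point that deserves attention is confirming that separated $F$-finite noetherian schemes fall within Aoki's hypotheses, which is a standard unraveling rather than a substantive difficulty.
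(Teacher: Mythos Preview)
Your proposal is correct and follows the same route as the paper: apply \cref{th:strong-generation} for $\perf X$-module generation, convert via \cref{le:thick-Frobenius} with $U=\{E\}$ to classical generation, then invoke Kunz (excellence and finite Krull dimension) plus Aoki to upgrade to strong generation. The paper is slightly more explicit in citing Kunz for finite Krull dimension, which you list as a hypothesis of Aoki without stating why it holds; otherwise the arguments are identical.
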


\begin{proof}
Since $\supp_X G= X$, \cref{th:strong-generation} yields the first equality below:
\[
\dbcat{\coh X}=\thick^{\odot}_{\dcat X}(F^e_*G)=\thick_{\dcat X}(F^e_*(E\otimes G))\,.
\]
The second equality is by \cref{le:thick-Frobenius}, given that $E$ generates $\perf X$. 

Since $X$ is $F$-finite, it has finite Krull dimension, by~\cite[Proposition~1.1]{Kunz:1976}---see also \cref{le:codepth}(2)---and is excellent, by~\cite[Theorem~2.5]{Kunz:1976}. When $X$ is separated, Aoki's theorem~\cite{Aoki:2021} yields that $\dbcat{\coh X}$ has a strong generator, and as $F^e_*(E\otimes G)$ is generator for $\dbcat{\coh X}$, it must be a strong generator as well.
\end{proof}

Here is a corollary of the preceding result.

\begin{corollary}
\label{co:quasi-proj}
Let $X$ be a quasi-projective scheme over an $F $-finite field of prime characteristic $p$, $L$ a very ample line bundle, and set
\[
G\colonequals \bigoplus_{i=0}^{\dim X}{L}^{\otimes i}\,.
\]
Then $F^e_*G$ strongly generates $\dbcat{\coh X}$ for each $e> \log_p(\codepth X)$.
\end{corollary}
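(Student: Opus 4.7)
The plan is to invoke \cref{co:strong-generation} with $\mathcal{O}_X$ playing the role of the complex with full support and Orlov's direct sum playing the role of the generator of $\perf X$. The key additional input is Orlov's theorem \cite{Orlov:2009}, already recalled in the introduction, which states that for any quasi-projective $X$ with very ample line bundle $L$, the direct sum $E \colonequals \bigoplus_{i=0}^{\dim X} L^{\otimes i}$ is a generator for $\perf X$.

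First I would verify the hypotheses of \cref{co:strong-generation} are met. Since $X$ is of finite type over an $F$-finite field $k$, and the Frobenius commutes with base change, it follows that $X$ itself is noetherian and $F$-finite. By \cref{pr:codepth-scheme}, $\codepth X$ is finite, so the assumption $e > \log_p(\codepth X)$ is meaningful. Being quasi-projective, $X$ is automatically separated, so the strong-generation conclusion of \cref{co:strong-generation} applies. Finally, $\supp_X \mathcal{O}_X = X$, so $\mathcal{O}_X$ qualifies as the complex with full support.

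Now I would apply \cref{co:strong-generation} with the generator $E$ of $\perf X$ as above and with $G = \mathcal{O}_X$. The conclusion is that
\[
F^e_*(E \otimes_X \mathcal{O}_X) \;=\; F^e_* E \;=\; \bigoplus_{i=0}^{\dim X} F^e_* L^{\otimes i}
\]
is a strong generator of $\dbcat{\coh X}$ for every $e > \log_p(\codepth X)$, which is exactly the claim.

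There is no real obstacle here beyond bookkeeping: the corollary is essentially a combination of \cref{co:strong-generation} with Orlov's classical description of a generator of $\perf X$ in the quasi-projective setting. The only mild point worth flagging in the write-up is the verification that quasi-projectivity over an $F$-finite field forces $X$ to be $F$-finite, so that the finiteness of $\codepth X$ (and hence the meaning of the bound on $e$) is guaranteed.
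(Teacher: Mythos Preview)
Your proposal is correct and matches the paper's own proof essentially verbatim: the paper simply notes that $G$ generates $\perf X$ by Orlov's theorem and then invokes \cref{co:strong-generation}. Your additional bookkeeping (checking $F$-finiteness, separatedness, finiteness of $\codepth X$) is left implicit in the paper but is entirely appropriate to include.
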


\begin{proof}
Since $G$ generates $\perf X$, see \cite[Theorem~4]{Orlov:2009}, \cref{co:strong-generation} applies.
\end{proof}

The result below is immediate from \cref{co:strong-generation}, as $\perf X$ is generated by the tensor unit $\mco_X$ in the affine case.

\begin{corollary}
 \label{co:strong-generation-affine}
If $X$ is an $F$-finite affine scheme of prime characteristic $p$, then $F^e_*\mco_X$ strongly generates $\dbcat{\coh X}$ for each $e> \log_p(\codepth X)$. In fact $F^e_*G$ is a strong generator for $\dbcat{\coh X}$ whenever $G$ is in $\dbcat{\coh X}$ with $\supp_X G=X.$ \qed
\end{corollary}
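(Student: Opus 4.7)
The plan is to deduce the statement directly from \cref{co:strong-generation}, exploiting the special feature of the affine setting that the structure sheaf by itself generates $\perf X$ as a thick subcategory. Concretely, when $X=\spec R$ is affine, the tensor unit $\mco_X$ satisfies $\thick_{\dcat X}(\mco_X)=\perf X$, since every perfect complex over a noetherian ring is finitely built from $R$ (the localizing subcategory of $\dcat R$ generated by $R$ is all of $\dcat R$, and \cref{eq:neeman-cgt} identifies its compacts with $\thick_{\dcat R}(R)=\perf R$).

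With this observation in hand, the second, more general statement follows immediately: given any $G$ in $\dbcat{\coh X}$ with $\supp_X G=X$, apply \cref{co:strong-generation} with the choice $E=\mco_X$. Since $\mco_X\otimes G\simeq G$ in $\dcat X$, one obtains that $F^e_*G$ generates $\dbcat{\coh X}$ for every $e>\log_p(\codepth X)$, and the generator is in fact strong because an $F$-finite scheme is automatically separated in the affine case (and more generally, Aoki's theorem, invoked in the proof of \cref{co:strong-generation}, produces the strong generator from a generator).

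The first statement of the corollary is then a special case of the second obtained by taking $G=\mco_X$, since $\supp_X\mco_X=X$. There is essentially no obstacle here: all the analytic work has been carried out in \cref{th:nilpotence} (nilpotence of Frobenius) and \cref{th:strong-generation} (the local-to-global assembly), and the only content of the present corollary is to recognize that in the affine case the role of the tensor-generator $E$ of $\perf X$ can be played by the unit $\mco_X$, so the tensor factor drops out of the formula $F^e_*(E\otimes G)$ in \cref{co:strong-generation}. Hence the proof is a one-line appeal to \cref{co:strong-generation} together with the identification $\thick_{\dcat X}(\mco_X)=\perf X$.
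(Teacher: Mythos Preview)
Your proposal is correct and matches the paper's own argument exactly: the corollary is stated with a \qed and prefaced only by the remark that it is immediate from \cref{co:strong-generation}, since in the affine case the tensor unit $\mco_X$ generates $\perf X$. Your observation that one takes $E=\mco_X$ so that $F^e_*(E\otimes G)\simeq F^e_*G$, and then specializes $G=\mco_X$ for the first assertion, is precisely the intended deduction.
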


\begin{chunk}
It follows from the preceding result that for any $F $-finite normal toric ring $R$ the isomorphism classes of conic modules generate $\dbcat{\rmod R}$. This is because each $R$-module $F ^e_*R$ decomposes as a direct sum of conic modules, see \cite[Proposition~4.15]{Faber/Muller/Smith:2019} (or \cite[section 3]{BrunsConic}), and there are only finitely many conic modules up to isomorphism; cf.\@ \cite[Corollary 4.12]{Faber/Muller/Smith:2019}, as well as \cite[Proposition 3.6]{BrunsGubeladze}.
\end{chunk}

One consequence of the existence of generators is the following strengthening of Kunz's theorem~\cite[Theorem~2.1]{Kunz:1969} characterizing regularity in terms of flatness of the Frobenius. The result below is, in turn, subsumed by \cite[Theorem~1.1]{Avramov/Hochster/Iyengar/Yao:2012} but it seems worthwhile to state and prove the version below for it highlights one application of statements concerning the existence of generators. 

\begin{corollary}
\label{co:kunz}
Let $X$ be an $F$-finite scheme. If there exists an $G$ in $\dbcat{\coh X}$ with $\supp_XG=X$ such that $F^n_*G$ is perfect for some $n\ge 1$, then $X$ is regular.
\end{corollary}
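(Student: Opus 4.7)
Since regularity is a local property (see \cref{ch:regular}), I fix $x\in X$, set $R\colonequals\mco_x$ and $M\colonequals G_x$. Then $M\neq 0$ (because $\supp_XG=X$) and $F^n_*M$ is perfect over $R$ by hypothesis. Since $R$ is $F$-finite, $\codepth R$ is finite by \cref{le:codepth}(2); choose $e>\log_p(\codepth R)$ with $e\geq n$. The task is to show $R$ is regular.

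Two thick-subcategory containments combine to drive the argument. First, \cref{th:nilpotence} places the residue field $k$ in $\thick_{\dcat R}(F^e_*K^M)$; since the Koszul complex $K$ is perfect, $K^M\in\thick_{\dcat R}(M)$, and the exact functor $F^e_*$ gives $F^e_*K^M\in\thick_{\dcat R}(F^e_*M)$, whence $k\in\thick_{\dcat R}(F^e_*M)$. Second, as $F^n_*M\in\thick_{\dcat R}(R)$ by perfectness, applying $F^{e-n}_*$ yields $F^e_*M=F^{e-n}_*(F^n_*M)\in\thick_{\dcat R}(F^{e-n}_*R)$, and so $k\in\thick_{\dcat R}(F^{e-n}_*R)$. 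In the opposite direction, the local form of \cref{eq:hopkins} applied to the perfect, full-support complex $F^n_*M$ gives $\thick_{\dcat R}(F^n_*M)=\perf R$; thus $R\in\thick_{\dcat R}(F^n_*M)$ and, after applying $F^{e-n}_*$, also $F^{e-n}_*R\in\thick_{\dcat R}(F^e_*M)$.

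By \cref{th:strong-generation} localized to $R$, $F^e_*M$ classically generates $\dbcat{\rmod R}$, and by the above the same holds for $F^{e-n}_*R$. Invoking \cref{le:sample}, regularity of $R$ is equivalent to $\Ext^i_R(F^e_*M,F^e_*M)=0$ for $i\gg 0$. The plan is to use the Frobenius adjunction $F^{(e-n)*}\dashv F^{e-n}_*$ to rewrite $\RHom_R(F^e_*M,F^e_*M)=\RHom_R(F^n_*M,F^{(e-n)!}(F^e_*M))$, and to exploit the perfectness of $F^n_*M$ in the source slot so as to reduce the required vanishing to a boundedness statement for $F^{(e-n)!}(F^e_*M)$. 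The main obstacle is to establish this amplitude bound without circularly invoking Kunz's theorem on $F^{e-n}_*R$ (which is itself tantamount to the conclusion); the resolution is that the perfectness of $F^n_*M$, rather than of $F^{e-n}_*R$, supplies the genuine source of control, via the counit $F^{(e-n)*}F^{e-n}_*F^n_*M\to F^n_*M$ together with the finite cohomological amplitude of $\RHom(P,-)$ for perfect $P$.
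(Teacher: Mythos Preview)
Your first two paragraphs are solid: the localization is the right move, the containment $k\in\thick_{\dcat R}(F^e_*M)$ follows from \cref{th:nilpotence} as you say, and the equality $\thick_{\dcat R}(F^e_*M)=\thick_{\dcat R}(F^{e-n}_*R)$ is a clean consequence of Hopkins--Neeman together with the perfectness of $F^n_*M$.

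The gap is in the final paragraph, and it is genuine. You reduce to the vanishing of $\Ext^i_R(F^e_*M,F^e_*M)$ for $i\gg 0$ and rewrite this, via the adjunction $F^{e-n}_*\dashv F^{(e-n)!}$, as $\Ext^i_R\bigl(F^n_*M,\,F^{(e-n)!}(F^e_*M)\bigr)$. Since $F^n_*M$ is perfect with full support (so $R\in\thick(F^n_*M)$), this vanishes for $i\gg 0$ if and only if $F^{(e-n)!}(F^e_*M)=\RHom_R(F^{e-n}_*R,F^e_*M)$ is cohomologically bounded. But $F^e_*M$ generates $\dbcat{\rmod R}$ and in particular builds $k$, so such boundedness forces $\Ext_R^i(F^{e-n}_*R,k)=0$ for $i\gg 0$, that is, $F^{e-n}_*R$ perfect---which by Kunz is exactly the regularity of $R$. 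You are back where you started. The counit $F^{(e-n)*}F^{e-n}_*F^n_*M\to F^n_*M$ you appeal to belongs to the \emph{other} adjunction (the left one $F^{(e-n)*}\dashv F^{e-n}_*$, which you also name) and carries no information about the amplitude of $F^{(e-n)!}$; I do not see how to extract the needed bound from it without already knowing the answer.

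The paper's proof sidesteps this circularity by a different manoeuvre: rather than fixing $n$ and enlarging $e$, it enlarges $n$ while \emph{preserving perfectness of the pushforward}. The key observation is that if $F^n_*G$ is perfect then so is $F^{2n}_*(F^{n,*}G\lotimes_X G)$ (projection formula plus closure of $\perf X$ under thick operations), and $F^{n,*}G\lotimes_X G$ again has full support since the Frobenius is a homeomorphism. Iterating, one may assume $n>\log_p(\codepth X)$. After tensoring with a generator $E$ of $\perf X$ (still perfect, by the same observation) one has a \emph{perfect} complex which, by \cref{co:strong-generation}, generates $\dbcat{\coh X}$. A perfect generator has bounded self-Ext, and \cref{le:sample} finishes at once---no upper-shriek amplitude estimate is needed.
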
 

\begin{proof}
A key observation observation is that given finite maps of schemes 
\[
X''\xrightarrow{f'}X'\xrightarrow{f} X
\]
and complexes $G''$ and $G'$ over $X''$ and $X'$, respectively, with $f'_*G''$ and $f_*G'$ are perfect, the complex $(ff')_*({f'}^{*}G'\lotimes G'')$ is perfect; the maps being finite guarantee that ${f'}^{*}G'\lotimes G''$ has coherent cohomology; see, for instance, \cite[Corollary~3.4]{Berthelot/Gorthendieck/Illusie:1971}. 

Given this observation it follows that if $F^n_*G$ is perfect for some $n\ge 1$, then $F^{2n}_*(F^{n,*}G\lotimes G)$ is perfect. Since $F$ is a homeomorphism on the underlying topological spaces, the support of $F^{n,*}G\lotimes G$ equals that of $G$. Thus, repeating this construction, we can make $n$ arbitrarily large; in particular, bigger than $\log_p(\codepth X)$. Let $E$ be a generator for $\perf X$. Using once again the observation in the previous paragraph, one deduces that $F^n_*(E\otimes G)$ is also perfect. Since  the complex generates $\dbcat{\coh X}$, by \cref{co:strong-generation}, it remains to apply \cref{le:sample} to deduce that $X$ is regular.
\end{proof}

\section{\texorpdfstring{$F$}{F}-thickness} 
\label{se:F-thickness}

The results in \cref{se:generation} guarantee that for any $F$-finite (separated) noetherian scheme $X$, its bounded derived category is (strongly) generated by a high Frobenius pushforward of \emph{some} perfect complex $G$. In this section we investigate when taking $G=\mco_X$ does the job. We refer the reader to \cite{Huybrechts:2006} regarding background on exceptional collections, semiorthogonal decompositions, and tilting generators.

\begin{definition}
\label{def:F_thick}
Let $X$ be a noetherian $F$-finite scheme. We say $X$ is \emph{$F$-thick} if $F_*^e \mco_X$ generates $\dbcat{\coh{X}}$, for some integer $e\ge 1$.
\end{definition}

By \cref{co:strong-generation-affine}, every affine scheme is $F$-thick. In this section, we present other examples of $F$-thick schemes, and examples of schemes that are not $F$-thick.

\subsection*{Relation to tilting}
The more stringent requirement that $F_*^e \mco_X$ is a tilting generator has been studied in \cite{Langer:2008, Raedschelders/Spenko/VandenBergh:2019,tilting_frob, frob_flag_1}. The class of $F$-thick schemes is strictly larger. Indeed for toric varieties there are obstructions to the existence of full exceptional collections, which prevents the tilting condition from being satisfied; see \cite[Theorem 1.3]{Efimov:2014}, as well as \cite{PerlingHille:2006,Mateusz:2011}. The affine situation is simpler.
 
\begin{proposition}\label{pr:regularity_via_tilting}
Let $R$ be a commutative noetherian ring. If $G$ is a tilting generator for $\dbcat{\rmod R}$, then $G\cong \shift^s P$, for some finitely generated, faithful, projective $R$-module $P$ and integer $s$; moreover, the ring $R$ is regular. Therefore $\dbcat{\rmod R}$ has a tilting generator if and only if $R$ is regular.
\end{proposition}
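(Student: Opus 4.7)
The plan is to extract regularity from \cref{le:sample}, then exploit the vanishing of self-$\Ext$ to force $G$ to collapse onto a single cohomological degree, and finally read off faithfulness from the generation hypothesis. Since $G$ generates $\dbcat{\rmod R}$ with $\Ext^i_R(G,G)=0$ for every $i\ne 0$ (in particular for $i\gg 0$), \cref{le:sample} yields that $R$ is regular. By \cref{ch:regular} this identifies $\dbcat{\rmod R}$ with $\perf R$, so in particular $G$ is perfect.

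Next I would show $G\simeq \shift^s P$ for a finitely generated projective $R$-module $P$. Localizing at $\fp\in\spec R$, represent $G_\fp$ by a minimal complex $P^\bullet$ of finitely generated free $R_\fp$-modules, nonzero precisely in degrees $[a_\fp,b_\fp]$. The top piece of the internal hom complex $\Hom^\bullet_{R_\fp}(P^\bullet,P^\bullet)$ is $\Hom_{R_\fp}(P^{a_\fp},P^{b_\fp})$; it consists entirely of cycles, and since the differentials of $P^\bullet$ take values in $\fm_\fp P^\bullet$ by minimality, the boundaries in this top piece are contained in $\fm_\fp\Hom_{R_\fp}(P^{a_\fp},P^{b_\fp})$. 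Nakayama's lemma then forces $\Ext^{b_\fp-a_\fp}_{R_\fp}(G_\fp,G_\fp)$ to be nonzero whenever $a_\fp<b_\fp$, so the tilting hypothesis gives $a_\fp=b_\fp$ and $G_\fp\simeq \shift^{-a_\fp}R_\fp^{n_\fp}$. The integer $s_\fp\colonequals -a_\fp$ is locally constant on $\spec R$, which has only finitely many clopen connected components since $R$ is noetherian; decomposing $R$ as a product along these components yields $G\simeq \shift^s P$ with $P\colonequals H^{-s}(G)$ a finitely generated projective $R$-module.

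Since $G=\shift^s P$ generates $\dbcat{\rmod R}$, $R$ lies in $\thick(P)$ and hence $\supp_R P=\spec R$ by \cref{eq:hopkins}; for a finitely generated module over a noetherian ring, full support is equivalent to being faithful, giving the first claim. For the ``therefore'' clause, if $R$ is regular then $\dbcat{\rmod R}=\perf R$ by \cref{ch:regular}, and $R$ is tautologically a tilting generator as it classically generates $\perf R$ and satisfies $\Ext^i_R(R,R)=0$ for $i\ne 0$. The main obstacle I anticipate is the global assembly in the middle paragraph: the shift $s_\fp$ may vary across connected components of $\spec R$, so producing a single integer $s$ and projective $P$ as in the statement requires a careful decomposition of $R$ as a product and a uniform choice of shift on each factor.
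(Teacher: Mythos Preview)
Your argument is correct and is essentially the paper's proof spelled out: both invoke \cref{le:sample} for regularity and then use a minimal free complex together with Nakayama's lemma to force $\Ext^g_R(G,G)\ne 0$ in the top degree, pinning $G$ to a single cohomological degree. The paper compresses your localization step into the one-line global assertion that $\Ext_R^g(G,G)\ne 0$ for $g=\pdim_RG-\inf\hh_*(G)$ and leaves the faithfulness and converse implicit; your worry about the shift varying across components of $\spec R$ is legitimate (and not addressed in the paper either), but this is a wrinkle in the statement rather than a flaw in your method.
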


\begin{proof}
\cref{le:sample} implies $R$ is regular, and hence also that $\pdim_RG<\infty$. Then Nakayama's Lemma yields that
\[
\Ext_R^g(G,G) \ne 0 \text{ when } g=\pdim_RG - \inf \hh_*(G)\,.
\]
This implies the desired result.
\end{proof}

We record some examples of $F$-thick schemes.

\begin{example}
\label{exm:F_thick_projective_space}
 Let $X\colonequals \mathbb{P}^n_k$ denote projective $n$-space over $k$. In this case $X$ is $F$-thick and, moreover, $F_*^e \mco_X$ is a tilting generator whenever $p^e>n.$ 
 
 Indeed, \cite[Lemma~2.1]{Rao:1999} yields, for each $l\in \mathbb{Z}$, a decomposition
 \[
F_* ( \mco_X(l)) \cong \bigoplus_{i\geq -l/p}\mco_X (-i)^{\oplus \alpha(i,l)}\,, 
 \]
 where $\alpha(i,l)$ is the number of monomials of degree $l+ip$ that are not divisible by any $p^{\text{th}}$-power of a variable. Also, by \cite{beil_collection}, 
 \[
 G=\mco_X\oplus\mco_X (-1)\oplus\ldots\oplus \mco_X(-n)
 \]
 generates $\dbcat{\coh X}$. So combining these two facts yields that $F_*^e \mco_X$ generates $\dbcat{\coh X}$ whenever $p^e > n$. Also, as $\Ext_X^t(G,G)=0$ for $t>0$, it follows that $F_*^e \mco_X$ is a tilting generator for $\dbcat{\coh X}$.
 
 Fix $p^e \leq n$. For each $l$ in $\mathbb{Z}$ one has
 \begin{align*}
 \Hom_{\dcat{X}} (\mco_X(1), \shift^lF_*^e\mco_X)
 &\cong \Hom_{\dcat{X}} (\mco_X(p^e), \shift^l\mco_X) \\
 &= \hh^l (X, \mco_X(-p^e)) \\
 & = 0\,.
 \end{align*}
 Hence $F_*^e \mco_X$ does not generate $\dbcat{\coh X}$. 
\end{example}
	
\begin{example}
\label{exm:F_thick_alg_group}
 Let $\mathbf{G}$ be a semisimple algebraic group over an algebraically closed field of prime characteristic $p$. If $p>h$ where $h$ is the Coxeter number of $\mathbf{G}$, then $F_* \mco_\mathbf{G}$ is a
 generator for $\dbcat{\coh{\mathbf{G}}}$. Furthermore, if
 $\mathbf{P}$ is a parabolic subgroup of $\mathbf{G}$ and $p>h$, then
 $F_* \mco_{\mathbf{G}/\mathbf{P}}$ is a generator
 for $\dbcat{\coh{\mathbf{G}/\mathbf{P}}}$ (see \cite{Bezrukavnikov/Mirkovic/Rumynin:2008} and \cite[Corollary 1.1]{tilting_frob}). If $X$ is a flag variety of type $\mathbf{A}_2$ (or
 $\mathbf{B}_2$) and $p>3$ (or $p>5$, respectively), then $F_* \mco_X$ is a
 tilting generator for $\dbcat{\coh{X}}$; see \cite[Corollary 3.2]{tilting_frob}.
\end{example}
	
\begin{example}
\label{exm:F_thick_hara_blowup}
 The blowup $X$ of $\mathbb{P}^2$ at four points in general position is $F$-thick. By \cite[Corollary~5.4 \& Proposition~6.4]{Hara:2015}, if $e \geq  \log_p 3$, then $F_*^e \mco_X$ contains as direct summands bundles that form a full strong exceptional collection on $X$.
\end{example}
	
\begin{example}
Let $\mathbb{G}=G(2,n)$ be the Grassmannian over an algebraically closed field of prime characteristic $p$. By \cite[Corollary~16.11]{Raedschelders/Spenko/VandenBergh:2019} $F_* \mco_\mathbb{G}$ contains a full exceptional collection as direct summands whenever $p\geq n$, and hence it is a generator for $\dbcat{\coh \mathbb{G}}$; in fact, it contains the full exceptional collection in \cite{kaneda2,kaneda1}. By \cite[Theorem~1.1]{Raedschelders/Spenko/VandenBergh:2019} $F_* \mco_\mathbb{G}$ is a tilting generator if and only if $n=4$ and $p>3$.
\end{example}

\begin{example}\label{ex:severi_brauer_f_thick}
If $X$ is a Severi-Brauer variety over a perfect field $k$ of prime characteristic, then it is $F$-thick. We can apply \cite[Corollary~5.1.5]{Gille/Szamuely:2017} to obtain a finite Galois extension $\ell/k$ such that $X_{\ell}\cong \mathbb{P}^n_\ell$ for some integer $n>0$. The projection morphism $\pi\colon X_{\ell} \to X$ ensures that the exact functor
\[
\pi_* \colon \dbcat{\coh{X_{\ell}}} \to \dbcat{\coh{X}}
\]
is essentially surjective (see \cite{Sosna:2013} and \cite{Ballard/Duncan/McFaddin:2019}). There exists $e \gg 0$ such that $F_*^e \mco_{X_{\ell}}$ generates $\dbcat{\coh{X}_\ell}$. Therefore, $\pi_* (F_*^e \mco_{X_\ell})$ generates $\dbcat{\coh X}$. Notice that
\[
\pi_* F_*^e \mco_{X_\ell} \cong F_*^e \pi_* \mco_{X_\ell} \cong F_*^e \pi_* \pi^* \mco_X \cong (F_*^e \mco_X)^{\oplus g}\,,
\]
where $g\colonequals [\ell:k]$.
\end{example}

\begin{example}\label{ex:quadrics_f_thick}
If $X$ is a smooth quadric $X\subset \mathbb{P}^{n+1}$ over an algebraically closed field of odd characteristic $p$, then it follows from \cite[Corollary~5.2]{frob_flag_1} and \cite[Theorem~2]{Achinger:2012} $X$ is $F$-thick when $n$ is even and $n\geq 2(p+1)$, or $n$ is odd and $n \geq 3p+2$. The case where $F_*^e \mco_X$ is a tilting generator has been considered in \cite[Theorem~0.1]{Langer:2008}.
\end{example}

\begin{example}\label{ex:toric varieties}
In 2006, Bondal~\cite{Bondal:Fthick} suggested that toric varieties are $F$-thick. Recently, the first author speculated the stronger statement that $F_\ast^e \mco_X$ generates $\dbcat{\coh X}$ in exactly $\dim X$ steps for a toric variety $X$, provided that $e$ is sufficiently large. 
The latter was recently confirmed by Favero and Huang~\cite{FH:toric}, and Hanlon, Hicks, and Jeffs~\cite{HHL:toric}. 
\end{example}

\subsection*{Nonsingular curves}
Here we focus on nonsingular projective curves, and prove that the only ones which are $F$-thick are those of genus zero, and in this case, $F_*^e \mco_X$ is a tilting generator; see \cref{th:f_thick_curves}. This is in sharp contrast with the affine case; see \cref{co:strong-generation-affine}. First, a bit of terminology. 

Fix a nonsingular projective curve $X$, and let $E$ denote a vector bundle on $X$ of rank $r$. The slope of $E$ is $\mu(E ) \colonequals \frac{\deg(E )}{r}$ where $\deg(E)$ is its degree. We say $E$ is semistable (respectively, stable) if for all subbundles of positive rank $G \subseteq E $ defined over $k$, one has $\mu(G)\leq \mu(E)$ (respectively, $\mu(G) < \mu( E)$). 

\begin{theorem}
\label{th:f_thick_curves} 
A nonsingular projective curve over an infinite field of prime characteristic is $F$-thick if and only if its genus is zero.
\end{theorem}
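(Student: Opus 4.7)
Plan. I would handle the two directions separately.

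For the \emph{if} direction, a smooth projective curve of genus zero is a plane conic, hence a one-dimensional Severi--Brauer variety over $k$; $F$-thickness then follows directly from \cref{ex:severi_brauer_f_thick}.

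For the \emph{only if} direction, suppose $g(X) \geq 1$ and fix $e \geq 1$; I would show $F^e_*\mco_X$ does not generate $\dbcat{\coh X}$. Riemann--Roch together with the equality $\chi(F^e_*\mco_X) = \chi(\mco_X)$ gives
\[
\mu(F^e_*\mco_X) = \frac{(p^e-1)(g-1)}{p^e}\,;
\]
denote this rational number $\mu_e$. The essential external input is that $F^e_*\mco_X$ is \emph{semistable}: for $g \geq 2$ this is a theorem of X.~Sun, and for $g = 1$ it follows from the observation that any destabilizing subbundle would pull back to destabilize $(F^e)^*F^e_*\mco_X$, whose Harder--Narasimhan slopes can be computed explicitly via the counit of the adjunction $(F^e)^* \dashv F^e_*$.

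Let $\cat{S}_{\mu_e} \subseteq \coh X$ denote the full subcategory of semistable vector bundles of slope $\mu_e$. A standard slope argument shows $\cat{S}_{\mu_e}$ is a \emph{wide} subcategory of $\coh X$---closed under kernels and cokernels of morphisms within it, extensions in $\coh X$, finite direct sums, and summands---since for any $f\colon A \to B$ with $A, B \in \cat{S}_{\mu_e}$, semistability at the common slope $\mu_e$ forces the image of $f$ to be semistable of slope $\mu_e$ and saturated in $B$, whence $\ker f$ and $\coker f$ remain in $\cat{S}_{\mu_e}$. Because $X$ is a smooth projective curve, $\coh X$ is hereditary; consequently every object of $\dbcat{\coh X}$ splits as the direct sum of shifts of its cohomology sheaves, and the assignment $\cat T \mapsto \cat T \cap \coh X$ yields a bijection between thick subcategories of $\dbcat{\coh X}$ and wide subcategories of $\coh X$. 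Applying this to $\thick_{\dbcat{\coh X}}(F^e_*\mco_X)$, its intersection with $\coh X$ is the wide closure of $F^e_*\mco_X$, which is contained in $\cat{S}_{\mu_e}$.

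Since $\cat{S}_{\mu_e}$ consists entirely of torsion-free bundles of a fixed rational slope, it contains no nonzero torsion sheaf---in particular no skyscraper $k(x)$ lies in it. Hence $\thick_{\dbcat{\coh X}}(F^e_*\mco_X) \subsetneq \dbcat{\coh X}$ for every $e \geq 1$, so $X$ is not $F$-thick. The principal technical hurdle is a clean invocation of the Frobenius semistability result that is valid uniformly for $g \geq 1$ and over the $F$-finite ground field (which need not be perfect or algebraically closed); granted this, the conclusion reduces to slope arithmetic and the hereditary structure of $\coh X$.
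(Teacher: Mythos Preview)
Your proof is correct and, for the ``only if'' direction, takes a genuinely different route from the paper's. Both arguments rest on the semistability of $F^e_*\mco_X$ when $g\ge 1$; the paper cites Sun uniformly for all nonzero genus, so your separate treatment of $g=1$ is unnecessary. After that common input the paper invokes a Faltings-type result: for any semistable bundle on a positive-genus curve there is a nonzero bundle $E$ with $\hh^*(X, E\lotimes_X F^e_*\mco_X)=0$, and Serre duality converts this into $\Ext^*_X(E^\vee, F^e_*\mco_X)=0$, contradicting generation. You instead argue structurally: the category $\cat{S}_{\mu_e}$ of semistable bundles of slope $\mu_e$ is wide in $\coh X$ (the saturation step you sketch is exactly what keeps cokernels inside), and the Br\"uning--Krause bijection between thick subcategories of $\dbcat{\coh X}$ and wide subcategories of the hereditary category $\coh X$ then confines $\thick(F^e_*\mco_X)\cap\coh X$ to $\cat{S}_{\mu_e}$, which contains no torsion sheaf. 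Your approach trades one external citation for another (the thick/wide correspondence), but it yields more: it pins down the thick closure of $F^e_*\mco_X$ as lying among semistable bundles of a fixed slope, rather than merely exhibiting one object outside it. For the ``if'' direction, note that \cref{ex:severi_brauer_f_thick} as stated assumes $k$ perfect, whereas an $F$-finite field need not be; the paper's own proof is equally informal here, and in any case the Severi--Brauer argument goes through over any $F$-finite $k$ once one uses a finite separable splitting field for the conic.
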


\begin{proof}
The genus zero case is essentially \cref{exm:F_thick_projective_space}. 
	
Next, suppose that $X$ is an $F$-thick projective curve, and take $e>0$ so that $F_*^e\mco_X$ generates $\dbcat{\coh X}$. 
If $X$ has nonzero genus, then $F_*^e \mco_X$ is semistable by \cite[Corollary~4.4]{Sun:2008}, and so \cite[Theorem~1]{semistable_curves} ensures there exist a nontrivial vector bundle $E$ such that
\[
\hh^i(X, E\lotimes_{X} F_*^e \mco_X) =0\quad\text{for all $i$.}
\]
Denote by $E^\vee$ the dual vector bundle of $E$. From Serre duality, one gets
\[
\hh^i(X, E \lotimes_{X} F_*^e \mco_X) = \Ext^i_X(E ^\vee,F_*^e \mco_X)=0\,,
\]
which is a contradiction for $E^\vee$ is nontrivial and $F_*^e \mco_X$ generates $\dbcat{\coh X}$. Therefore, the genus of $X$ must be zero. 
\end{proof}
 
\begin{chunk}
A direct sum decomposition of $F_*^e \mco_X$ is known when $X$ is an elliptic curve (that is, the genus of $X$ is one). Namely, if $X$ is ordinary, then $F_*^e \mco_X$ splits into $p^e$ non-isomorphic $p^e$-torsion line bundles; cf.\@ \cite[Theorem~5.5]{Ejiri/Sannai:2016}. On the other hand, when $X$ is supersingular, $F_*^e \mco_X$ is isomorphic to Atiyah's indecomposable vector bundle $\mathcal{F}_{p^e}$ of degree zero and rank $p^e$; see \cite{Atiyah:1957}. 

For smooth projective curves $X$ of genus $g\geq 2$, it follows from \cite[Proposition~1.2]{LangePauly:2008} that the vector bundle $F_*^e \mco_X$ is stable and hence indecomposable.
\end{chunk}
 
\subsection*{Nonexamples and obstructions} 
Here is family of nonsingular projective varieties that are not $F$-thick.

\begin{theorem}
\label{th:ruled_surface_f_thick}
If $\pi \colon X \to C$ is a nonsingular projective ruled surface with $C$ a nonsingular projective curve with positive genus over an algebraically closed field of prime characteristic, then $X$ is not $F$-thick.
\end{theorem}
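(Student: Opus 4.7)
The strategy is to push the generation hypothesis on $X$ down along the ruling $\pi\colon X\to C$ and then invoke \cref{th:f_thick_curves} to get a contradiction, since $C$ has positive genus. Assume, for a contradiction, that $F_{X,*}^e \mco_X$ generates $\dbcat{\coh X}$ for some $e\geq 1$. I will verify that $F_{C,*}^e \mco_C$ then generates $\dbcat{\coh C}$, which is impossible when $C$ has positive genus.

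Three standard facts about the ruled surface assemble this. First, since $\pi$ is a $\mathbb{P}^1$-bundle, one has $\pi_*\mco_X = \mco_C$ and $R^i\pi_*\mco_X = 0$ for $i>0$, so $R\pi_*\mco_X \cong \mco_C$. Second, because $\pi$ is a morphism of $\mathbb{F}_p$-schemes, it commutes with the absolute Frobenius on source and target, i.e.\ $F_C\circ\pi = \pi\circ F_X$; since Frobenius pushforward is exact, one obtains an isomorphism of functors $R\pi_*\circ F_{X,*}^e \cong F_{C,*}^e\circ R\pi_*$ on derived categories. Third, the projection formula combined with $R\pi_*\mco_X\cong\mco_C$ gives $R\pi_*\pi^* A \cong A$ for every $A\in\dbcat{\coh C}$, so $R\pi_*\colon \dbcat{\coh X}\to\dbcat{\coh C}$ is essentially surjective.

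Putting these together, one computes
\[
R\pi_*(F_{X,*}^e\mco_X) \cong F_{C,*}^e R\pi_*\mco_X \cong F_{C,*}^e\mco_C.
\]
If $F_{X,*}^e\mco_X$ finitely builds every object of $\dbcat{\coh X}$, then applying the exact functor $R\pi_*$ shows that $F_{C,*}^e\mco_C$ finitely builds $R\pi_*B$ for every $B\in\dbcat{\coh X}$. By essential surjectivity of $R\pi_*$, this means $F_{C,*}^e\mco_C$ generates $\dbcat{\coh C}$; that is, $C$ is $F$-thick. But $C$ has positive genus, contradicting \cref{th:f_thick_curves}.

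There is no real obstacle: each of the three ingredients is standard, and the only mildly subtle point is making sure that $R\pi_*$ lands in the bounded derived category of coherent sheaves, which follows from properness and flatness of $\pi$ together with the $\mathbb{P}^1$-fiber dimension. The argument is a clean instance of the principle that a generation statement pushes forward along any essentially surjective exact functor between triangulated categories.
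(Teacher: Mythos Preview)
Your proof is correct and follows essentially the same route as the paper: assume $F$-thickness, push forward along $\pi$ to deduce $F$-thickness of $C$, then invoke \cref{th:f_thick_curves}. The only difference is cosmetic: the paper cites Orlov's semiorthogonal decomposition for projective bundles to produce the essentially surjective projection $\dbcat{\coh X}\to\dbcat{\coh C}$, whereas you work directly with $R\pi_*$ and verify essential surjectivity via the projection formula and $R\pi_*\mco_X\cong\mco_C$. Your version is slightly more self-contained, since it does not need the full SOD statement; the paper's version makes the structural reason (the embedding $\pi^*\dbcat{\coh C}\hookrightarrow\dbcat{\coh X}$) more visible.
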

	
\begin{proof}
Assume the contrary that $F_*^e \mco_X$ is a generator for $\dbcat{\coh{X}}$ for some $e>0$. We claim that $C$ is $F$-thick, contradicting \cref{th:f_thick_curves}. 
	
To this end, the base field being algebraically closed allows us to use \cite[Theorem~2.6]{Orlov:1993} to obtain the following semiorthogonal decomposition 
\[
\dbcat{\coh{X}} = \langle \pi^*\dbcat{\coh{C}}, \mathcal{B}
\rangle\,.
\]
The corresponding projection 
\begin{equation}\label{eq:projection}
\dbcat{\coh{X}} \to \dbcat{\coh{C}}
\end{equation}
induces the isomorphism $\pi_* F_*^e \mco_X \cong F_*^e
\mco_C$ since $\pi_* \mco_X \cong \mco_C$ as coherent $\mco_C$-modules, and all higher derived pushforwards of $\pi_* \mco_X$ vanish. Finally, as \cref{eq:projection} is essentially surjective it follows that $F_*^e \mco_C$ is a generator for $\dbcat{\coh{C}}$, yielding the desired contradiction.
\end{proof}

The next result identifies obstructions to the $F$-thick property.
	
\begin{proposition}\label{pr:vanishing_f_pullback}
Let $X$ be a smooth projective variety over an $F$-finite field.  If there exists a nonzero object $E$ in  $\dbcat{\coh{X}}$ such that $F^{e,*}E$ is acyclic for some positive integer $e$, then $F_*^e \mco_X$ cannot generate $\dbcat{\coh{X}}$.
\end{proposition}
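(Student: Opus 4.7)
The plan is to argue by contradiction: assuming $F_*^e\mco_X$ generates $\dbcat{\coh X}$, I will combine the projection formula with the vanishing hypothesis on $F^{e,*}E$ to force $E\cong 0$, contradicting that $E$ is nonzero.

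The first step is to exploit smoothness. Since $X$ is smooth over an $F$-finite field, the Frobenius $F^e\colon X\to X$ is finite (by $F$-finiteness) and flat (by Kunz's theorem, since $X$ is regular). Moreover, as recorded in \cref{ch:regular}, smoothness gives $\dbcat{\coh X}=\perf X$, so $E$ is a perfect complex and $E\lotimes_X-$ is a well-defined exact endofunctor of $\dbcat{\coh X}$. Flatness of $F^e$ further ensures that derived and underived pullbacks along $F^e$ agree, so $F^{e,*}E$ is unambiguous.

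Next I would invoke the projection formula for the finite flat morphism $F^e$: for $\mco_X$ viewed on the source and $E$ on the target, there is a natural isomorphism
\[
F^e_*\bigl(\mco_X\lotimes_X F^{e,*}E\bigr)\xra{\ \sim\ }F^e_*\mco_X\lotimes_X E\,.
\]
The hypothesis that $F^{e,*}E$ is acyclic collapses the left-hand side to $0$, whence $F^e_*\mco_X\lotimes_X E\cong 0$ in $\dbcat{\coh X}$.

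To close the argument, suppose $F_*^e\mco_X$ generates $\dbcat{\coh X}$. Then $\mco_X$ belongs to $\thick(F^e_*\mco_X)$, and since $E\lotimes_X-$ is an exact triangulated endofunctor of $\dbcat{\coh X}$, it carries this thick subcategory into $\thick(E\lotimes_X F^e_*\mco_X)=\thick(0)$. In particular $E\simeq E\lotimes_X\mco_X$ is zero in $\dbcat{\coh X}$, giving the desired contradiction. I do not anticipate any serious obstacle; the main detail to verify is the applicability of the projection formula in the needed generality, which is standard given that $F^e$ is finite and $E$ is perfect.
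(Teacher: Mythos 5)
Your proof is correct, and it takes a genuinely different route from the paper's. Both arguments hinge on the projection formula for the finite morphism $F^e$, which yields $F^e_*\mco_X \lotimes_X E \cong F^e_*(F^{e,*}E) \cong 0$ from the acyclicity of $F^{e,*}E$ (using that $F^e$ is affine, so $F^e_*$ is exact). The paper's proof then feeds this into Serre duality and tensor-Hom adjunction to conclude that $\Ext^{d-n}_X(F_*^e \mco_X, \SheafHom(E,\omega_X))$ vanishes for all $n$; since $\SheafHom(E,\omega_X)$ is nonzero (as $E \neq 0$ and $\omega_X$ is invertible), the putative generator would fail to detect a nonzero object. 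Your proof sidesteps Serre duality entirely: you observe that the exact endofunctor $E\lotimes_X -$ of $\perf X = \dbcat{\coh X}$ sends $\thick(F^e_*\mco_X)$ into $\thick(E\lotimes_X F^e_*\mco_X) = \thick(0)$; so if $\mco_X$ lay in $\thick(F^e_*\mco_X)$ we would get $E \cong E\lotimes_X\mco_X \cong 0$, a contradiction. This is cleaner and more structural: it isolates the single fact that tensoring against a nonzero perfect complex can annihilate $F^e_*\mco_X$, which is incompatible with the latter generating. The paper's duality formulation is perhaps more informative if one also wants to see which nonzero object fails to be built, but your argument is the more economical path to the stated conclusion.
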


\begin{proof}
Standard isomorphisms using hom-tensor adjunction, the projection formula and Serre duality yield that
\[
 \Hom_{\dcat{X}} (\mco_X, \shift^n F^{e,*} E)\cong \Ext^{d-n}_X ( F_*^e \mco_X,\SheafHom(E,\omega_X) )^\vee
\]
vanish for each $n\in \mathbb{Z}$, where $d$ is the dimension of $X$. From these isomorphisms and the assumption on $F^{e,*}E$, we conclude that $F_*^e \mco_X$ does not generate  $\dbcat{\coh X}$.
\end{proof}

Let $X$ be a smooth projective $F$-thick variety. From \cref{pr:vanishing_f_pullback}, it follows that for every $E$ in  $\dbcat{\coh X}$ and $e \gg 0$, the complex $F^{e,*} E$ is not acyclic. In particular, we have the following constraint on the cohomology of line bundles.

\begin{corollary} \label{cor:neq-cohom-of-f-thick}
If $X$ is $F$-thick, then there is a nonnegative integer $n$ such that $\hh^*(X,L^n) \neq 0$ for any line bundle $L$.
\end{corollary}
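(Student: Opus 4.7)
The plan is to obtain this as a direct consequence of \cref{pr:vanishing_f_pullback} via its contrapositive, taking the test object $E$ there to be an arbitrary line bundle $L$. First I would fix a positive integer $e\geqslant 1$ witnessing that $X$ is $F$-thick, that is, one for which $F_*^e \mco_X$ generates $\dbcat{\coh X}$. I would then set $n \colonequals p^e$ and claim that this single $n$ (depending only on $X$, not on $L$) works uniformly for every line bundle on $X$.

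Given any line bundle $L$ on $X$, I would record two facts: $L$ is a nonzero object of $\dbcat{\coh X}$, and the Frobenius pullback satisfies $F^{e,*}L \cong L^{\otimes p^e} = L^{n}$. The latter is the standard identification, since $F^e$ is the identity on the underlying topological space of $X$ and raises local trivialization data to the $p^e$-th power. Plugging $E\colonequals L$ into \cref{pr:vanishing_f_pullback} and reading the contrapositive, the hypothesis that $F_*^e \mco_X$ generates $\dbcat{\coh X}$ forces $F^{e,*}L$ to be non-acyclic, which is precisely the statement that $\hh^*(X, L^n) \neq 0$.

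There is no substantive obstacle to this argument; the corollary really is an immediate translation of \cref{pr:vanishing_f_pullback}. The only content beyond invoking the proposition is the observation that Frobenius pullback restricts to the $p^e$-th tensor power functor on the Picard group, which allows the proposition to be applied uniformly to all line bundles at once with a common exponent $n=p^e$.
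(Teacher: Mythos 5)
Your proposal is correct and matches the paper's proof exactly: the paper likewise fixes an $e$ with $F_*^e\mco_X$ a generator, observes $F^{e,*}L = L^{p^e}$, and invokes \cref{pr:vanishing_f_pullback}. Nothing to add.
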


\begin{proof}
Since $F^{e,*} L = L^{p^e}$, we can apply Proposition~\ref{pr:vanishing_f_pullback} along with the displayed isomorphism in the proof of \cref{pr:vanishing_f_pullback}.
\end{proof}

\begin{corollary}
Abelian varieties over algebraically closed fields of prime characteristic are not $F$-thick.
\end{corollary}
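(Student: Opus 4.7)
The plan is to apply \cref{pr:vanishing_f_pullback} (the slightly stronger form underlying \cref{cor:neq-cohom-of-f-thick}). Concretely, if $A$ is an abelian variety of positive dimension over an algebraically closed field $k$ of characteristic $p$, it suffices to exhibit, for every integer $e\ge 1$, a nonzero line bundle $L$ on $A$ such that
\[
F^{e,*}L \;=\; L^{p^e}
\]
is acyclic, i.e., $\hh^i(A, L^{p^e})=0$ for all $i\ge 0$. Once such an $L$ is produced for each $e\ge 1$, \cref{pr:vanishing_f_pullback} shows that $F^e_*\mco_A$ fails to generate $\dbcat{\coh A}$ for every $e\ge 1$, which is exactly the negation of $F$-thickness.

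To produce such line bundles, I would use the classical vanishing theorem of Mumford for line bundles in $\mathrm{Pic}^0(A)$: if $M\in\mathrm{Pic}^0(A)$ is nontrivial, then $\hh^i(A,M)=0$ for all $i$ (see Mumford, \emph{Abelian Varieties}, §8, III). Applied to $M=L^{p^e}$ for $L\in\mathrm{Pic}^0(A)$, this reduces the problem to finding $L\in \mathrm{Pic}^0(A)$ with $L^{p^e}\neq\mco_A$, that is, a class in $\mathrm{Pic}^0(A)$ that is not killed by $p^e$.

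Now identify $\mathrm{Pic}^0(A)$ with $\hat{A}(k)$, the $k$-points of the dual abelian variety. Since $k$ is algebraically closed and $\hat{A}$ has positive dimension, $\hat{A}(k)$ is infinite; indeed it already contains $\ell^{2g}$ points of order $\ell$ for each prime $\ell\neq p$, where $g=\dim A=\dim\hat A$. On the other hand, the $p^e$-torsion subgroup $\hat A[p^e](k)$ is finite (of order at most $p^{eg}$). Consequently there exists $L\in\hat{A}(k)$ whose order does not divide $p^e$, and this is the line bundle required.

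The main content here is the vanishing of cohomology for nontrivial degree-zero line bundles on abelian varieties, which is a standard input; the remaining step — producing a non-$p^e$-torsion point in $\hat A(k)$ — is a counting argument. No step appears to present a genuine obstacle beyond correctly invoking Mumford's vanishing theorem.
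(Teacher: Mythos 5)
Your proof is correct and follows the same route as the paper: choose a nontrivial degree-zero line bundle $L$ whose $p^e$-th power is still nontrivial, apply Mumford's vanishing for nontrivial $L\in\operatorname{Pic}^0$, and conclude via \cref{pr:vanishing_f_pullback} (equivalently \cref{cor:neq-cohom-of-f-thick}). The only cosmetic difference is that the paper fixes a single $L$ that is not $p$-power torsion once and for all, while you mostly construct $L$ per $e$ by a counting argument — though you also note the $\ell$-torsion points for $\ell\neq p$, which recovers the paper's uniform choice.
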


\begin{proof}
Let $X$ be an abelian variety and pick  $L \in \operatorname{Pic}^0(X)$ that is not $p$-torsion. As $\mco_X \neq L^{p^e}$, we have $\hh^*(X,F^{e,*}L) = 0$, so Corollary~\ref{cor:neq-cohom-of-f-thick} applies.
\end{proof}

\subsection*{Projective complete intersection varieties}
\label{sec:F_summands_CI}

All varieties considered in the rest of this section are assumed to be defined over a fixed $F$-finite field $k$ of characteristic $p$. For a projective complete intersection variety $X\subseteq \mathbb{P}_k^n$ of degree $d$, as is well-known, there a semiorthogonal decomposition
\begin{equation}
 \label{eq:KusnetsovSOD}
 \dbcat{\coh X} = \langle \mathcal A_X, \mco_X (d-n) ,\ldots,\mco_X \rangle\,;
\end{equation}
 $\mathcal A_X$ is often referred to as the Kuznetsov component of \cref{eq:KusnetsovSOD} due to the connection Kuznetsov~\cite{Kuznsetov:CubicFourFold} drew between the structure of $\mathcal A_X$ and the rationality of $X$. The component was also studied by Orlov~\cite[Theorem~2.3]{orlov_derivd_cats_sing} where he related it to the category of graded singularities associated to $X$. A scheme $X$ is said to be \emph{globally $F$-split} if $\mco_X$ is a summand of $F^e_*\mco_X$.

\begin{theorem}
\label{th:f_split_kuz_comp}
Let $X\subseteq \mathbb{P}^n_k$ be a projective complete intersection of degree at most $n$ and such $X$ is globally $F$-split. Then $X$ is $F$-thick if and only if $F_*^e \mco_X$ generates the Kuznetsov component of $X$ for some $e>0$.
\end{theorem}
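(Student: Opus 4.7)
The proof splits into the two directions of the biconditional.

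For the forward direction, assume $\thick(F_*^e\mco_X)=\dbcat{\coh X}$ for some $e>0$. The projection functor $\pi_{\mathcal A_X}\colon\dbcat{\coh X}\to\mathcal A_X$ arising from the admissible SOD \eqref{eq:KusnetsovSOD} is exact (being a right adjoint to the inclusion), and exact functors carry thick subcategories to thick subcategories. Since $\pi_{\mathcal A_X}$ is essentially surjective,
\[
\mathcal A_X=\pi_{\mathcal A_X}(\dbcat{\coh X})=\pi_{\mathcal A_X}\bigl(\thick(F_*^e\mco_X)\bigr)\subseteq \thick\bigl(\pi_{\mathcal A_X}(F_*^e\mco_X)\bigr),
\]
so $F_*^e\mco_X$ generates the Kuznetsov component.

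For the reverse direction, assume $\pi_{\mathcal A_X}(F_*^e\mco_X)$ generates $\mathcal A_X$ for some $e>0$, and set $\mathcal B\colonequals\langle\mco_X(d-n),\ldots,\mco_X\rangle$. The SOD decomposition triangle $B\to F_*^e\mco_X\to \pi_{\mathcal A_X}(F_*^e\mco_X)\to \shift B$ with $B\in\mathcal B$ places $\pi_{\mathcal A_X}(F_*^e\mco_X)$ in $\thick(F_*^e\mco_X,\mathcal B)$, so $\mathcal A_X\subseteq\thick(F_*^e\mco_X,\mathcal B)$. Combining with the SOD gives
\[
\dbcat{\coh X}=\thick\bigl(F_*^e\mco_X,\mco_X(d-n),\ldots,\mco_X\bigr),
\]
and it suffices to find an integer $e'\geq e$ for which every twist $\mco_X(j)$, $j\in\{d-n,\ldots,0\}$, already lies in $\thick(F_*^{e'}\mco_X)$.

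The globally $F$-split hypothesis supplies a retract $F_*^{e_0}\mco_X\twoheadrightarrow\mco_X$ for some $e_0\geq 1$; iterating it realizes $\mco_X$ as a direct summand of $F_*^{me_0}\mco_X$ for every $m\geq 1$, and also makes $F_*^e\mco_X$ a summand of $F_*^{me_0}\mco_X$ once $me_0\geq e$. In particular, for any $e'=me_0$ sufficiently large one has $\mco_X\in\thick(F_*^{e'}\mco_X)$, and the Kuznetsov-generation hypothesis transfers from $F_*^e\mco_X$ to $F_*^{e'}\mco_X$. The essential difficulty is then producing the remaining twists $\mco_X(d-n),\ldots,\mco_X(-1)$ inside $\thick(F_*^{e'}\mco_X)$. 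My plan is to attack this through the ambient embedding $X\subseteq\mathbb{P}^n$: pushing the Koszul resolution of $\mco_X$ over $\mathbb{P}^n$ through $F_{\mathbb{P}^n,*}^{e'}$ and invoking Rao's line-bundle decomposition of $F_{\mathbb{P}^n,*}^{e'}\mco_{\mathbb{P}^n}(m)$ (recalled in \cref{exm:F_thick_projective_space}) should relate $F_{X,*}^{e'}\mco_X$ to restrictions of enough ambient twists that, for $p^{e'}\gg n$, each required $\mco_X(j)$ is extracted from the thick closure of $F_*^{e'}\mco_X$. With all $\mco_X(j)\in\thick(F_*^{e'}\mco_X)$, the reduction above forces $\thick(F_*^{e'}\mco_X)=\dbcat{\coh X}$, so $X$ is $F$-thick.
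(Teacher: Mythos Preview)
Your forward direction is fine, modulo a small language issue: exact functors do not literally ``carry thick subcategories to thick subcategories'' (the image need not be closed under summands), but they do satisfy $\pi(\thick G)\subseteq\thick(\pi G)$, which is what your displayed chain actually uses.

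Your reduction in the reverse direction is also correct: once one knows that each $\mco_X(j)$ with $d-n\le j\le 0$ lies in $\thick(F_*^{e'}\mco_X)$ for some suitable $e'$, the SOD argument finishes the proof. The genuine gap is that you have not established this. Your proposed route---push the Koszul resolution of $\mco_X$ on $\mathbb{P}^n$ through $F_{\mathbb{P}^n,*}^{e'}$ and invoke Rao's splitting---does not obviously produce what you need. The Frobenius on $X$ and on $\mathbb{P}^n$ are compatible via $\iota_*$, i.e.\ $\iota_*F_{X,*}^{e'}\mco_X\cong F_{\mathbb{P}^n,*}^{e'}\iota_*\mco_X$, so Rao's decomposition tells you about the $\mco_{\mathbb{P}^n}(j)$-summands of $\iota_*F_{X,*}^{e'}\mco_X$, not about the $\mco_X(j)$-summands of $F_{X,*}^{e'}\mco_X$ itself. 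Going the other way, $\iota^*F_{\mathbb{P}^n,*}^{e'}\mco_{\mathbb{P}^n}$ is not $F_{X,*}^{e'}\mco_X$. So the plan, as stated, does not close.

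The paper handles exactly this point by proving a stronger splitting result (\cref{th:f_split_sod}): for $p^e>n-d$ the sheaves $\mco_X(d-n),\ldots,\mco_X$ occur as honest direct summands of $F_*^e\mco_X$. The argument is commutative-algebraic, passing to the homogeneous coordinate ring and using a graded Fedder-type criterion (\cref{prop:general_graded_fedder}) together with the colon-ideal formula $(I^{[q]}:I)=(f^{q-1})+I^{[q]}$ for a complete intersection. The global $F$-split hypothesis is used precisely to guarantee $f^{q-1}\notin\fm^{[q]}$, which is what makes the splitting criterion fire for the required range of twists. Once \cref{th:f_split_sod} is in hand, both directions of the theorem are immediate for $p^e>n-d$, with no need for the projection/decomposition-triangle bookkeeping.
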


Contrast this with \cref{th:completeintersection} where we prove $F_*G$ generates $\dbcat{\coh X}$ for some perfect complex $G$. \cref{th:f_split_kuz_comp} follows from the following stronger result, describing $ F_*^e \mco_X $ for such $X$. We thank Devlin Mallory for suggesting this formulation, and for supplying part of the proof.

\begin{theorem}
\label{th:f_split_sod}
Let $X$ be as in \cref{th:f_split_kuz_comp} and let $d$ be its degree. If $p^e> n-d$ then there is a direct sum decomposition 
\[
 F_*^e \mco_X = M\oplus \bigoplus_{i\in\mathbb{Z}}  \mco_X(i)^{a_i}\,,
\]
where $M$ has no twists of $\mco_X$ as direct summands, and 
\[
a_i= \begin{cases} 
1 & \text{if $i=0$} \\
\ge 1 & \text{if $d-n\le i\le -1$}\\
0 &\text{otherwise}.
\end{cases}
\]
Moreover, no other line bundles show up in the direct sum decomposition of $F_*^e\mco_X$ when $\dim X\ge 3$.
\end{theorem}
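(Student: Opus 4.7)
The plan is to count the multiplicity $a_i$ of each twist $\mco_X(i)$ as a direct summand of $F_*^e \mco_X$ using Hom-group computations via the adjunction $F^{e,*} \dashv F_*^e$ and Grothendieck duality for the finite morphism $F^e$. Since $X \subseteq \mathbb{P}^n_k$ is a Gorenstein complete intersection of degree $d$, we have $\omega_X \cong \mco_X(d-n-1)$, and the relative dualizing sheaf of $F^e$ is $\omega_{F^e} \cong \omega_X^{1-p^e}$. These two adjunctions give
\[
\Hom_X(\mco_X(i), F_*^e \mco_X) \cong \hh^0(X, \mco_X(-ip^e))
\]
and
\[
\Hom_X(F_*^e \mco_X, \mco_X(i)) \cong \hh^0(X, \mco_X((d-n-1) + (i-d+n+1)p^e)).
\]
The first vanishes for $i > 0$, and using the hypothesis $p^e > n-d$ the second vanishes exactly when $i < d-n$. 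A line bundle summand isomorphic to $\mco_X(i)$ requires both Hom-groups to be nonzero, forcing $d-n \leq i \leq 0$, hence $a_i = 0$ outside this range.

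To realize summands in this range, I exploit the $F$-split hypothesis. The case $i = 0$ is immediate: a Frobenius splitting $\sigma \colon F_*^e \mco_X \to \mco_X$ yields $a_0 \geq 1$. For $i \in \{d-n,\ldots,-1\}$, the projection formula identifies $F_*^e \mco_X \otimes \mco_X(i) \cong F_*^e(\mco_X(ip^e))$, so tensoring $\sigma$ with $\mco_X(i)$ produces a splitting $F_*^e(\mco_X(ip^e)) \to \mco_X(i)$. Combined with a suitably chosen nonzero section $s \in \hh^0(X, \mco_X(-ip^e))$, which defines an inclusion $F_*^e \mco_X(ip^e) \hookrightarrow F_*^e \mco_X$, and dualized against the Grothendieck trace $F_*^e \omega_X^{1-p^e} \to \mco_X$, one constructs a split inclusion $\mco_X(i) \hookrightarrow F_*^e \mco_X$, proving $a_i \geq 1$. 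For the exact value $a_0 = 1$, one uses $\hh^0(X, \mco_X) = k$ together with an analysis of the composition pairing between the two Hom-groups above to conclude that the multiplicity is exactly one.

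Finally, the assertion that no other line bundle summands appear when $\dim X \geq 3$ follows from the Lefschetz hyperplane theorem for smooth complete intersections: $\operatorname{Pic}(X) \cong \mathbb{Z} \cdot \mco_X(1)$ in this range, so every line bundle on $X$ is of the form $\mco_X(i)$. The main obstacle is the middle step, namely the explicit construction of the retraction $F_*^e \mco_X \to \mco_X(i)$ splitting the chosen inclusion for $i \in \{d-n,\ldots,-1\}$. This requires a delicate pairing of the Frobenius splitting with the auxiliary section $s$ through the Grothendieck trace map, arranging that the composition lands on the identity of $\mco_X(i)$ rather than zero; the compatibility of the choice of $s$ with the $F$-splitting is what makes this possible.
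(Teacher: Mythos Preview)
Your treatment of the vanishing $a_i=0$ for $i\notin[d-n,0]$, the equality $a_0=1$, and the Picard group statement for $\dim X\ge 3$ is correct and essentially matches the paper's. The paper phrases the upper bound $a_i=0$ for $i<d-n$ via the isomorphism $\SheafHom(F^e_*\omega_X,\mco_X)\otimes\omega_X\cong F^e_*\mco_X$ and anti-ampleness of $\omega_X$, which is just a repackaging of your Grothendieck duality computation.

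The genuine gap is exactly where you flag it: establishing $a_i\ge 1$ for $d-n\le i\le -1$. Nonvanishing of both $\Hom(\mco_X(i),F_*^e\mco_X)$ and $\Hom(F_*^e\mco_X,\mco_X(i))$ does \emph{not} by itself produce a split inclusion; the composition pairing into $\End(\mco_X(i))=k$ could land in zero for every choice. Your sketch (tensor the $F$-splitting, choose a section $s$, pair against the trace) names the right ingredients but does not show the composite is the identity, and you say as much. The paper fills this gap by passing to the homogeneous coordinate ring $R=S/I$ and using a graded Fedder-type criterion: $R(-j)$ is a summand of $F_*^eR$ if and only if some $s\in S_{jq}$ satisfies $s\,(I^{[q]}:I)\not\subseteq\fm^{[q]}$. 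For a complete intersection one has $(I^{[q]}:I)=(f^{q-1})+I^{[q]}$ with $f$ the product of the defining equations, and $F$-splitness gives $f^{q-1}\notin\fm^{[q]}$. Choosing a monomial $g$ maximal with $gf^{q-1}\notin\fm^{[q]}$ forces $\deg g=(n{-}d{+}1)(q{-}1)$; the hypothesis $q>n-d$ ensures $(n{-}d)q\le\deg g$, so for each $0\le j\le n-d$ one can peel off a degree-$jq$ monomial factor $s_j$ of $g$, and then $s_jf^{q-1}\notin\fm^{[q]}$ because $s_j$ divides $g$. This monomial factorization is precisely the ``compatibility of the choice of $s$ with the $F$-splitting'' that your outline leaves unspecified; without it (or an equivalent device) the argument is incomplete.
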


\begin{chunk}
 Any $X$ as in \cref{th:f_split_sod} is Fano. If $X$ is smooth it is well-known that it is $F$-split for $p \gg 0$. More generally for any rational singularity  $\tilde{X}$ with a model over a finitely-generated $\mathbb{Z}$-algebra,  its reduction mod $p$ is $F$-split for $p \gg 0$; see \cite{Smith:2000}.
\end{chunk}

We fix the following notation for the rest of the section. Let $S\colonequals k[x_0,\ldots,x_n]$ where each $x_i$ has degree one, and write $\fm$ for the homogeneous maximal ideal $(x_0,\ldots,x_n)$ of $S$. Fix $I$ a homogeneous ideal of $S$ and set $R\colonequals S/I$. 

\begin{chunk}
\label{lem:fedder_like_criterion}
Write $I^{[p^e]}$ for the ideal generated by all $r^{p^e}$ where $r\in I$. The element $F^e_*r$ of $F^e_*R$ refers to the elements $r \in R$ viewed in $F^e_*R$. We record a variation on Fedder's criterion: For a homogeneous element $s$ in $S$, with image $r$ in $R$, and  for any positive integer $e$, the inclusion of the cyclic $R$-submodule of $F_*^eR$ generated by $F_*^e (r)$ splits if and only if $s ( I^{[p^e]}:I) \not \subseteq \mathfrak{m}^{[p^e]}$.
The argument is similar that in \cite[Proposition~1.7]{Fedder:1983}; see also \cite{Glassbrenner:1996}.
\end{chunk}

\begin{proposition}
\label{prop:general_graded_fedder} 
As graded $R$-modules $R(-j)$ is a direct summand of $F_*^e R$ if and only if there exists $s \in S_{p^ej}$ such that
\[
 s (I^{[p^e]} : I) \not\subseteq \fm^{[p^e]}\,.
\]
\end{proposition}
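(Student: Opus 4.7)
My plan is to obtain the graded biconditional as a careful enhancement of the ungraded Fedder-type criterion in \cref{lem:fedder_like_criterion}, making sure that every piece of data in sight is homogeneous of the right degree. First I will unpack the statement: with the convention that $F_*^e \bar r \in (F_*^e R)_j$ whenever $r \in R_{p^e j}$ (so that $F_*^e R$ becomes an honest $\mathbb{Z}$-graded $R$-module), a graded direct-summand decomposition $R(-j) \subseteq F_*^e R$ is equivalent to the existence of graded $R$-homomorphisms
\[
\iota \colon R(-j) \longrightarrow F_*^e R \quad\text{and}\quad \pi \colon F_*^e R \longrightarrow R(-j)
\]
with $\pi \circ \iota = \id$. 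The map $\iota$ is determined by $\iota(1) = F_*^e \bar{s}$ for a representative $s \in S_{p^e j}$, while $\pi$ is an element of $\Hom_R(F_*^e R, R)_{-j}$.

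Next I will invoke the graded form of Fedder's isomorphism
\[
\Hom_R(F_*^e R, R) \;\cong\; F_*^e \bigl( (I^{[p^e]} :_S I) / I^{[p^e]} \bigr),
\]
under which a class $[u]$ corresponds to the map $F_*^e \bar r \mapsto T(F_*^e(ur)) \bmod I$, with $T \colon F_*^e S \to S$ the Frobenius trace that picks out the coefficient of $x_0^{p^e-1}\cdots x_n^{p^e-1}$. Degree bookkeeping shows that a $\pi$ of degree $-j$ corresponds to a homogeneous $u \in (I^{[p^e]}:I)$ of degree $(n+1)(p^e-1) - p^e j$, so that $us$ lands in the single degree $(n+1)(p^e-1)$ of $S$. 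The identity $\pi \iota = \id$ becomes $T(F_*^e(us)) \equiv 1 \pmod I$; since $T(F_*^e(us)) \in S_0 = k$ for degree reasons, this reduces to $T(F_*^e(us)) \ne 0$.

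The last translation is purely combinatorial: in degree $(n+1)(p^e-1)$ the only monomial outside $\fm^{[p^e]} = (x_0^{p^e},\ldots,x_n^{p^e})$ is $x_0^{p^e-1}\cdots x_n^{p^e-1}$, so a homogeneous $us$ of that degree escapes $\fm^{[p^e]}$ precisely when its coefficient on this monomial, namely $T(F_*^e(us))$, is nonzero. The forward direction is then immediate. For the converse, given $s(I^{[p^e]}:I) \not\subseteq \fm^{[p^e]}$ I will pick any homogeneous $u \in (I^{[p^e]}:I)$ with $us \notin \fm^{[p^e]}$, note that this forces $\deg u \le (n+1)(p^e-1) - p^e j$, and then multiply $u$ by a suitable monomial (raising the relevant exponents up to $p^e - 1$) to land in the required degree without destroying the non-containment; rescaling then achieves $T(F_*^e(us)) = 1$.

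The main obstacle is purely the degree bookkeeping: one must verify that Fedder's isomorphism is strictly compatible with the $\tfrac{1}{p^e}\mathbb{Z}$-grading on $F_*^e S$ (refined to an integer grading by the convention above) and that the Frobenius trace has internal degree $-(n+1)(p^e-1)/p^e$, so that $us$ is homogeneous of degree exactly $(n+1)(p^e-1)$ when $s \in S_{p^e j}$ and $u$ has the auxiliary degree indicated. Once these alignments are confirmed, both directions follow from the ungraded \cref{lem:fedder_like_criterion} by restricting attention to homogeneous data.
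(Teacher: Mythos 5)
Your proposal is correct and takes essentially the same route as the paper: both pass to the integral part of the $\tfrac{1}{p^e}\mathbb{Z}$-grading on $F_*^e R$, identify the degree $p^e j$ of a generator of a cyclic graded submodule isomorphic to $R(-j)$, and then reduce to the ungraded Fedder-type criterion of \cref{lem:fedder_like_criterion}. The only stylistic difference is that the paper invokes \cref{lem:fedder_like_criterion} as a black box together with the standard fact that an ungraded splitting of a graded monomorphism can be homogenized, whereas you unpack the Fedder isomorphism and the trace map and carry out the degree bookkeeping explicitly — which is a fine (and correct) expansion of the same argument.
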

	
\begin{proof}
We consider the $\frac{1}{p^e}\mathbb{Z}$-graded structure on $F^e_*R$: for a homogeneous $r \in R$, the corresponding element in $F^e_*R$ has degree $\frac{\text{deg}(r)}{p^e}$. The direct sum of all $\mathbb{Z}$-graded components of $F_*^e R$ is denoted $(F_\ast^e R)_\mathbb{Z}$. It can be shown that a $\mathbb{Z}$-graded $R$-module $M$ is a direct summand of $(F_\ast^e R)_\mathbb{Z}$ if and only if it is a direct summand of $F_\ast^e R$.
 
Suppose that $R(-j)$ is a direct summand of $(F_*^e R)_\mathbb{Z}$ as a $\mathbb{Z}$-graded $R$-module, and let $\phi\colon R(-j) \to (F_*^e
R)_\mathbb{Z}$ be a splitting. The element $r \in R$ where $F^e_* r = \phi(1)$  has degree $p^ej$ and from \cref{lem:fedder_like_criterion} any lift $s$ of $r$ satisfies $s (I^{[p^e]} : I) \not\subseteq \fm^{[p^e]}$.
	
Conversely, suppose that there exists an $s \in S_{p^ej}$ such that
$s (I^{[p^e]} : I) \not\subseteq \fm^{[p^e]}.$
Write $r$ for the image of $s$ in $R_{p^ej}$. The element $F_*^e (r)$ in $(F_*^e R)_\mathbb{Z}$ has degree $d$, and so the map of $R$-modules $R \to (F_*^e R)_\mathbb{Z}$ defined by $1 \mapsto F_*^e (r)$ induces a $\mathbb{Z}$-graded $R$-module map
$\phi\colon R(-j) \to (F_*^e R)_\mathbb{Z}$. The assumption on $s$, combined with \cref{lem:fedder_like_criterion} ensures that $\phi$ splits in the category of (ungraded) $R$-modules. Hence it also splits in the category of graded modules. 
\end{proof}

\begin{proof}[Proof of \cref{th:f_split_sod}]
Assume $q\colonequals p^e > n-d$. First we verify that  $\mco_X(a)$ is a summand of $F_*^e\mco_X$ for each $a$ such that $d-n\le a\le 0$; this part of the proof works also when $X$ has degree $n+1$. Note $\mco_X(a)$ is a summand of $F^e_*\mco_X$ if and only if $R(a)$ is an $R$-module summand of $F^e_*R$; see \cite[Theorem 3.10]{Karen2000Vanishing}. We produce necessary $R$-module splittings using \Cref{prop:general_graded_fedder}. Let  $f_1,\ldots, f_t$ be a regular sequence in $S$ that generates $I$, and  defines $X$ in $\mathbb{P}^n$. Then
\[
(I^{[q]}: I)= (f^{q-1})+ I^{[q]} \quad \text{where} \,\, f= f_1\cdots f_t \,. 
 \]
Since $X$, and hence also $R$, is $F$-split, $f^{q-1} \notin m^{[q]}$; see  \cite[section 2]{Fedder:1983}, \cite[Prop 4.10]{Karen1997SC}. Let $g$ be a nonzero monomial in $S$ such that
\[
 g f^{q-1} \notin m^{[q]}\quad \text{but}\quad x_igf^{q-1} \in m^{[q]}  \quad\text{for each } i=0,\ldots,n\,.
 \]
Thus $(x_0\cdots x_n)^{q-1}$ appears in the homogeneous polynomial $g f^{q-1}$, and so its degree is $(q-1)(n+1)$. For $q> n-d$, one has
\[
(n-d)q \leq \text{deg}(g)= (n+1)(q-1)- d(q-1) \, .
\]
Thus for  $j=0,\ldots, n-d$ we can choose a monomial factor $s_j$ of $g$ of degree $jq$ with $s_jf^{q-1} \notin m^{[q]}$. Then \Cref{prop:general_graded_fedder} yields that  $\mco_X(-j)$ is a summand of $F^e_*\mco_X$.

We verify, by an induction on $t$, that $\oplus_{j=n-d-t}^{n-d} \mco_X(-j)$ is a summand of $F^e_*\mco_X$ for $t \geq 0$; this holds when $t=0$ by the argument above.
 Once we know 
\[
F^e_*\mco_X \cong E \oplus \bigoplus_{j=n-d-t}^{n-d}\mco_X(-j) \,,
\]
it follows that $\mco_X(n-d-t-1)$ is a summand of $E$ since the former is a summand of $F^e_* \mco_X$ and does not admit any nonzero map to $\oplus_{j=n-d-t}^{n-d}\mco_X(-j)$. This completes the proof of the claim about the twists of $\mco_X$ occurring in $F^e_*\mco_X$.

We now show that $\mco_X(a)$ can appear in the direct sum decomposition of $F_*^e\mco_X$ only if $d-n\le a\le 0$ and exactly one copy of $\mco_X$ splits. To that end, write
\[
F_*^e \mco_X = M\oplus \bigoplus_{i\in\mathbb{Z}}  \mco_X(i)^{a_i} 
\]
where $M$ is some coherent sheaf. Since $X$ is $F$-split, $\mco_X$ is a summand of $F^e_* \mco_X$. Since $\hh^0(X,\mco_X)=\hh^0(X,F_*^e \mco_X)$ and the global sections on the right side are already accounted for by one $\mco_X$ summand, $a_0=1$ and $\mco_X(a)$ cannot appear as a direct summand of $F_*^e \mco_X$ for $a\ge 1$.  As to the other summands, since $\omega_X$ is invertible  one has the first isomorphism below:
\[
\SheafHom(F^e_*\omega_X, \mco_X) \otimes \omega_X \cong \SheafHom(F^e_*\omega_X, \omega_X) \cong F^e_*\SheafHom(\omega_X, F^{e, !}\omega_X)\cong F^e_*\mco_X \,.
\]
The others are standard. So $\mco_X(b)$ appears as a summand of $F^e_*\mco_X$ if and only if 
 \[
 \mco_X(-b) \otimes \omega_X \cong \mco_X(-b) \otimes \mco_X(d-n-1) = \mco_X(d-n-1-b)
 \]
 is a summand of $F^e_*\omega_X$. Since $\omega_X$ is anti-ample, $F^e_*\omega_X$ has no nonzero global sections, whereas $\mco_X(a)$ does whenever $a\ge 0$. We conclude that if $\mco_X(b)$ is a summand of $F^e_* \mco_X$ then $b\ge d-n$. 

 It remains to note that when $\dim X\ge 3$ the Picard group of $X$ is $\mathbb{Z}\cdot \mco_X (1)$. 
\end{proof}

\section{Numerical invariants}\label{se:new_invariants}

Motivated by the work in \cref{se:generation} one can introduce new invariants coming from the derived category to measure singularity types. One such invariant is briefly studied in this section. Throughout $R$ is an $F$-finite noetherian ring.

\begin{definition}
\label{d:F-level}
The \emph{Frobenius level} (or simply $F$-\emph{level}) of $R$ is
\[
\flevel(R)\colonequals \inf\{n\geq 0\mid R\text{ is in }\thick_{\dcat R}^n(F_*^eR) \text{ for some }e>0\}\,.
\]
\cref{co:strong-generation-affine} implies that this number is finite. A natural question arises: Does $F_*R$ already generate $R$? That is to say, is $F_*R$ proxy-small? See \cite{Dwyer/Greenlees/Iyengar:2006a,Dwyer/Greenlees/Iyengar:2006b} for a discussion on proxy-smallness, and some consequences of this property.
\end{definition}

\begin{chunk}
In the terminology of \cite{Avramov/Buchweitz/Iyengar/Miller:2010}, the \emph{level} of an $R$-complex $M$ with respect to an $R$-complex $N$, denoted $\level^N_{\dcat R}(M)$, is the infimum of the set of integers $n$ such that $M$ is in $\thick_{\dcat R}^n (N)$. Therefore $\flevel(R)$ is the least value of the level of $R$ with respect to $F_*^eR$ for a positive integer $e$.
\end{chunk}

A ring $R$ is \emph{$F$-split} if the Frobenius map $F\colon R\to R$ is split in $\rmod R$. The Frobenius level is a measure of the failure of $F$-splitness. This follows from the result below; it is well-known, we sketch a proof for lack of an adequate reference. 
	
\begin{proposition}
\label{pr:F_splitting_level}
For any finite map $f\colon R\to S$, with $R$  commutative noetherian, the following conditions are equivalent:
\begin{enumerate}[\quad\rm(1)]
    \item 
    $f\colon R\to S$ is split as $R$-modules;
    \item
    $f_*S$ has $R$ as a direct summand;
    \item
    $(f_*S)^n$ has $R$ as a direct summand for some $n\ge 1$;
    \item $\level^{f_*S}(R)\le 1$.
\end{enumerate}
\end{proposition}

\begin{proof}
Clearly (1)$\Rightarrow$(2)$\Rightarrow$(3)$\Rightarrow$(4). Moreover, (4) means that $R$ is a direct summand of an object of the form 
 \[
 \bigoplus_{n\in \mathbb{Z}} \shift^n (f_*S)^{\oplus s_n}\,;
 \]
equivalently,  $R$ is a direct summand of $(f_*S)^{\oplus s_0}$; thus (4)$\Rightarrow$(3). 

 It remains to verify $(3)\Rightarrow(1)$. As (1) can be checked locally on $\spec R$, so we can assume $R$ is local. Then (3) implies $(f_*S)^n$, and hence also $f_*S$ is a faithful $R$-module, and moreover that this remains so under base change. Thus $R\to S$ is pure; since it is finite, we deduce that it is split; see, for instance, \cite[Theorem~7.14]{Matsumura:1989}.
\end{proof}
 

\begin{example}
Let $R$ be an artinian $F$-finite local ring with residue field $k$, and let $\ell\ell(R)$ denote the Loewy length of $R$. For $e\ge \log_p\ell\ell(R)$ it is easy to see that $F_*^eR$ is a nonzero finite $k$-vectorspace and hence, 
\[
\level_{\dcat{R}}^{F_*^eR}(R)=\level_{\dcat{R}}^{k}(R)=\ell\ell(R)\,;
\]
see, for example, \cite[Theorem~6.2]{Avramov/Buchweitz/Iyengar/Miller:2010}. Thus $\flevel(R)\le \ell\ell(R)$.
\end{example}

Here is another family of rings for which we could bound $F$-levels. The bound is sharp, for the result below and \cref{{pr:F_splitting_level}}, any $F$-finite non-$F$-split hypersurface over a field of characteristic two has Frobenius level exactly 2. 

\begin{proposition}
\label{th:flevel_lci}
If $R$ is an $F$-finite locally complete intersection ring of prime characteristic $p$, then 
$\flevel(R)\le p^{\codepth R}$.
\end{proposition}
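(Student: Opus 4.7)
We begin by reducing to the complete local case. Via the local-to-global principles of \cref{se:local-global} (notably \cref{co:local-global}) combined with completion at a maximal ideal, it suffices to show, for $R=Q/(\underline f)$ with $Q$ a regular $F$-finite local ring and $\underline f=f_1,\ldots,f_c$ a regular sequence, that $R\in\thick^{p^c}_{\dcat R}(F_*R)$; once established with $e=1$, this gives $\flevel(R)\le p^c$. Introduce the auxiliary ring $R'\colonequals Q/(\underline{f^{\,p}})$. In characteristic $p$, the Frobenius on $R$ factors as $R\xrightarrow{\tilde F}R'\xrightarrow{\pi}R$, where $\tilde F(r)\colonequals \tilde r^{\,p}$ (any lift $\tilde r\in Q$, well-defined modulo $\underline{f^{\,p}}$) and $\pi$ is the canonical surjection with kernel $J\colonequals (\underline f)R'$ satisfying $J^{c(p-1)+1}=0$ by a pigeonhole argument using $f_j^{\,p}=0$ in $R'$.

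The first key input is a structural identification of $\tilde F_*R'$ as an $R$-module. Kunz's theorem gives $F_*Q\cong Q^N$ free over $Q$ for some $N$, and a direct computation yields
\[
\tilde F_* R' \,\cong\, F_* R' \,\cong\, F_*Q \big/ \underline f\!\cdot\! F_*Q \,\cong\, R^N
\]
as $R$-modules. In particular, $R$ is a direct summand of $\tilde F_*R'$.

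The second key input comes from applying the exact functor $\tilde F_*$ to the short exact sequence $0\to J\to R'\to R\to 0$ of $R'$-modules, yielding the short exact sequence of $R$-modules
\[
0 \longrightarrow \tilde F_* J \longrightarrow \tilde F_* R' \longrightarrow F_* R \longrightarrow 0.
\]
Filter $J$ by its powers. Since $\underline f$ is a regular sequence in $Q$ and $f_j^{\,p}=0$ in $R'$, the associated graded is
\[
\gr_J R' \,\cong\, R[T_1,\ldots,T_c]/(T_1^{\,p},\ldots,T_c^{\,p}),
\]
a free $R$-module of rank $p^c$; in particular each graded piece $J^i/J^{i+1}$ is free over $R$, so its pushforward $\tilde F_*(J^i/J^{i+1})$ is a finite direct sum of copies of $F_*R$. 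The filtration has length $c(p-1)$, so by subadditivity of level along extensions, $\tilde F_*J\in\thick^{c(p-1)}_{\dcat R}(F_*R)$. Combining with the displayed short exact sequence, $\tilde F_*R'\in\thick^{c(p-1)+1}_{\dcat R}(F_*R)$, and since $R$ is a direct summand of $\tilde F_*R'\cong R^N$ we conclude
\[
R\in\thick^{c(p-1)+1}_{\dcat R}(F_*R)\subseteq\thick^{p^c}_{\dcat R}(F_*R),
\]
where the inclusion uses Bernoulli's inequality $p^c\ge 1+c(p-1)$ for $c\ge 1$, $p\ge 2$.

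The main technical obstacle is the local-to-global reduction: while the analysis above is transparent over a complete local lci ring, one needs a single exponent $e$ (here $e=1$) to work simultaneously across all of $\spec R$. This is handled using the quasi-compactness of $\spec R$, Gabber's result that any $F$-finite ring is a quotient of an $F$-finite regular ring, and the compatibility of the whole Koszul--Kunz construction with localization and completion.
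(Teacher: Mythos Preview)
Your core argument is essentially the paper's: the same factorization $R\xrightarrow{\tilde F}R'=Q/(\underline f^{\,p})\xrightarrow{\pi}R$, the freeness of $\tilde F_*R'$ over $R$ via Kunz, and a filtration of $R'$ whose subquotients are (sums of) copies of $F_*R$. Your use of the $J$-adic filtration is in fact a bit sharper than the paper's Avramov--Miller filtration (which has $p^c$ subquotients, each a single copy of $F_*R$): you obtain $R\in\thick^{c(p-1)+1}_{\dcat R}(F_*R)$, which is stronger than the stated bound $p^c$ for $c\ge 2$.

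The one genuine gap is your reduction step. \Cref{co:local-global} only tells you that $F_*R$ \emph{generates} $\dbcat{\rmod R}$ once the local hypotheses hold; it says nothing about the number of cones, so it cannot deliver $R\in\thick^{p^c}(F_*R)$ globally from local information. What you need is a local-to-global principle for \emph{levels}, namely
\[
\level^{F_*R}_{\dcat R}(R)=\sup_{\fp\in\spec R}\level^{(F_*R)_\fp}_{\dcat{R_\fp}}(R_\fp),
\]
which is \cite[Corollary~3.4]{Letz:2021}; and then a further step (as in \cite[Corollary~2.17]{Letz:2021}) to pass from the local ring $R_\fp$ to a presentation $Q/(\underline f)$ with $Q$ regular and $\underline f$ a regular sequence. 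Gabber's theorem alone does not give this: it writes $R$ globally as a quotient of a regular ring, but not by a regular sequence. Once you replace the appeal to \cref{co:local-global} with these results of Letz (exactly as the paper does), your argument goes through and, as noted, actually yields the improved bound $\flevel(R)\le c(p-1)+1$.
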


\begin{proof}
 By \cite[Corollary~3.4]{Letz:2021}, 
 \[
 \level^{F_*R}_{\dcat R}(R)=\sup\{\level^{F_*R_\fp}_{\dcat {R_\fp}}(R_\fp)\mid \fp\in \spec R\}
 \]
 so we can assume $R$ is a local complete intersection ring. Applying \cite[Corollary~2.12]{Letz:2021}, we can further assume $R$ is a quotient of an $F$-finite regular local ring $S$ by a regular sequence $f_1,\ldots,f_c$. It suffices to show $\flevel(R)$ is at most $p^c.$ 
 
Factor $F$ as $R\to R'\to R$
where $R'=S/(f_1^p,\ldots, f_c^p)$ and the first map is base change along the Frobenius of $S$. Since $S$ is $F$-finite and regular, $R'$  is a nonzero finite free $R$-module. By \cite[Remark~2]{Avramov/Miller:2001}, one has a filtration
\[
0=R'_{p^c}\subseteq \ldots \subseteq R_1'\subseteq R_0'=R'
\]
by $R'$-submodules where each subquotient is $F_*R$. In particular, 
\[
\level^{F_*R}_{\dcat R}(R')\le p^c\,,
\]
since $R'$ is free over $R$.
\end{proof}

\begin{chunk}
It seems interesting to consider the $F$-level of dualizing complexes.
 Clearly, there are other invariants that one can introduce to study the singularity type, also in the global context. In future work, we hope to explore the relations between them and properties like finite Frobenius representation type and strong $F$-regularity.
\end{chunk}

\section{The first Frobenius pushforward}
\label{se:firstpushforward}

The results from \cref{se:generation} also bring up the following question.

\begin{question}
\label{qu:first-push-forward}
Suppose $X$ is $F$-finite. Is there a perfect complex $G$ such that $F_*G$ itself generates $\dbcat {\coh X}$? 
\end{question}

We know this is so when $X$ is regular. The main result of this section, \cref{th:completeintersection}, generalizes this to locally complete intersections schemes.

 \cref{qu:first-push-forward} also brings to mind the result below, due to Mathew~\cite{Mathew:2018}. 

\begin{chunk}
Let $R$ be a noetherian ring of prime characteristic and $F $ its Frobenius endomorphism. Assume $F $ is finite. Since the kernel of $F $ consists of nilpotent elements, it follows from \cite[Theorem~3.16]{Mathew:2018}, see also \cite[Section~11]{Bhatt/Scholze:2017a}, that for any integer $e\ge 1$ the $R$-module $F ^e_*R$ generates $\dcat R$ as a tensor ideal thick subcategory:
\[
\dcat R = \thick_{\dcat R}^{\otimes}(F ^e_*R)\,.
\]
\end{chunk}

\subsection*{Locally complete intersections}
\label{se:lci}

A scheme $X$ is \emph{locally complete intersection} if it is noetherian and the local ring $\mco_x$ is complete intersection for each $x\in X$. This condition means that the completion of $\mco_x$ at its maximal ideal can be presented as a regular ring modulo a regular sequence; see \cite[Section~2.3]{Bruns/Herzog:1998} for details.

\begin{theorem}
\label{th:completeintersection}
When $X$ is $F$-finite and locally complete intersection, for any perfect complex $G$ that generates $\perf X$, the complex $F_*G$ is a generator for $\dbcat{\coh X}$. In particular, if $X$ is affine, $F_*\mco_X$ is a strong generator for $\dbcat{\coh X}$.
\end{theorem}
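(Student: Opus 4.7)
The plan is to reduce to a statement about stalks via the local-to-global principle of \cref{th:local-global}, and then invoke the fine structure of the derived category of a local complete intersection.

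First I would apply \cref{th:local-global}: to show that $F_*G$ generates $\dbcat{\coh X}$ as a $\perf X$-module, it suffices to verify that for each $x\in X$ and each $E\in\dbcat{\coh X}$, the finite-length complex $\res{E}{x}$ lies in $\thick_{\dcat{\mco_x}}((F_*G)_x)$. Because the Frobenius is a universal homeomorphism on underlying topological spaces, one has a canonical identification $(F_*G)_x \cong F_*(G_x)$, where on the right $F$ denotes the Frobenius of $R\colonequals\mco_x$; moreover, since $F$ induces the identity on $\spec R$, pushforward preserves support on $\spec R$. Since $G$ generates $\perf X$ as a thick subcategory, $\mco_X\in\thick_{\dcat X}(G)$, whence $R\in\thick_{\dcat R}(G_x)$, so $G_x$ thick-generates $\perf R$ and in particular $\supp_R G_x = \spec R$. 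Thus $\supp_R F_*(G_x) = \spec R$ as well. Because $k$ finitely builds every finite length complex, the problem reduces to the following local statement: \emph{for $R$ a local complete intersection of prime characteristic with $F$ finite and $M\in\perf R$ with $\supp_R M = \spec R$, one has $k\in\thick_{\dcat R}(F_*M)$.}

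For this local claim I would appeal to a theorem in the style of Pollitz on the derived category of a local complete intersection: an object finitely builds the residue field provided its cohomological support in the projective cone of cohomology operators is maximal. The verification for $F_*M$ proceeds by combining two facts---$F_*M$ has finite complete intersection dimension over $R$ (in fact zero, since the Frobenius on an lci ring has finite complete intersection dimension and $M$ is perfect), and $F_*M$ has geometric support equal to $\spec R$. Together these imply maximal cohomological support for $F_*M$, and the classification of thick subcategories for lci rings yields $k\in\thick_{\dcat R}(F_*M)$. Dévissage then extends this to all finite length complexes, finishing the stalkwise check.

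Combining the stalkwise verification with \cref{th:local-global} gives $\dbcat{\coh X} = \thick^{\odot}_{\dcat X}(F_*G)$. For the upgrade to classical generation, note that $G\otimes G \in \thick_{\dcat X}(G)$ since $G$ generates $\perf X$, so $F_*(G \otimes G)\in\thick_{\dcat X}(F_*G)$ by exactness of $F_*$; \cref{le:thick-Frobenius} applied to $U=\{G\}$ then yields $\thick_{\dcat X}(F_*G)\supseteq \thick^{\odot}_{\dcat X}(F_*G) = \dbcat{\coh X}$. In the affine case, Aoki's theorem (as in the proof of \cref{co:strong-generation-affine}) upgrades this to a strong generator. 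The main obstacle is the local complete intersection statement above, which draws nontrivially on the cohomological support theory for complete intersections and on the Avramov--Miller analysis of the Frobenius; by contrast, the local-to-global reduction and the promotion to classical (and strong) generation are formal consequences of the framework already set up in \cref{se:local-global,se:nilpotence,se:generation}.
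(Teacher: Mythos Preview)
Your overall architecture matches the paper's: reduce to stalks via \cref{co:local-global}, invoke the classification of thick subcategories for local complete intersections in terms of cohomological support varieties (as in \cref{ch:supportvarieties}), and then use \cref{le:thick-Frobenius} to pass from $\perf X$-module generation to classical generation. The paper in fact applies \cref{th:completeintersection-2} with $G=\mco_X$ and then uses \cref{le:thick-Frobenius} with $U=\{G\}$ to obtain $\thick^{\odot}_{\dcat X}(F_*\mco_X)=\thick_{\dcat X}(F_*(G\otimes\mco_X))=\thick_{\dcat X}(F_*G)$; your route through $F_*(G\otimes G)$ also works but is slightly less direct.

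There is, however, a genuine gap in the local step. You assert that ``$F_*M$ has finite complete intersection dimension over $R$'' and ``$F_*M$ has geometric support equal to $\spec R$'' together imply maximal cohomological support for $F_*M$. This implication is false: over a local complete intersection $R$, every object of $\dbcat{\rmod R}$ has finite CI-dimension, and $R$ itself has full support yet $\rmV_R(R)=\{0\}$. What is actually needed---and what the paper invokes---is the complexity computation \cite[Theorem~12.2.4]{Avramov/Iyengar/Miller:2006}: for $M$ with $\supp_R M=\spec R$ one has $\cx_R(F_*M)=\codepth R$, which for a complete intersection equals the codimension $c$, so $\rmV_R(F_*M)$ is the entire cone. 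This is a substantive fact about the Frobenius, not a formal consequence of finite CI-dimension plus full support. You name ``the Avramov--Miller analysis of the Frobenius'' in your closing remarks, but the body of the argument does not actually use it; replacing your two bullet points with the cited complexity result closes the gap.
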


The proof uses the theory of cohomological support varieties for complete intersection local rings. The results we need are recalled below; see \cite{Avramov:1989} and \cite{Avramov/Buchweitz:2000b}. 

\begin{chunk}
\label{ch:supportvarieties}
Let $R$ be a local complete intersection ring of codimension $c$, and residue field $k$. For each $M$ in $ \dbcat{\rmod R}$, one can associate a Zariski closed cone $\rmV_R(M)$ in the homogeneous spectrum $\spec^* \mcS$ where $\mcS$ is the symmetric algebra on the graded $k$-space $\shift^{-2}k^c$; see \cite{Avramov:1989}. This is the ring of cohomology operators of Gulliksen~\cite{Gulliksen:1974} and Eisenbud~\cite{Eisenbud:1980}. The (Krull) dimension of $\rmV_R(M)$ is the complexity of $M$: 
\[
\cx_R(M)=\inf\{d\in \mathbb{N}: \rank_k \Ext_R^n(M,k)\le a n^{d-1}\text{ for some $a>0$ and all $n$}\}\,.
\]
The relevance of these varieties to the problem at hand is because of \cite[Theorem~3.1]{Liu/Pollitz:2021}: If $R$ is locally complete intersection and $M,N$ are in $ \dbcat{\rmod R}$, then 
\[
N\text{ is in }\thick_{\dcat R} M\iff \rmV_{R_\fp}(N_\fp)\subseteq \rmV_{R_\fp}(M_\fp) \text{ for each } \fp\in \spec R\,.
\]
\end{chunk}

\cref{th:completeintersection} follows from the result below with $G=\mco_X$, and \cref{le:thick-Frobenius}.

\begin{theorem}
\label{th:completeintersection-2}
Let $X$ be an $F $-finite noetherian scheme. If $X$ is locally complete intersection, each $G$ in $\dbcat{\coh X}$ with $\supp_X G= X$ satisfies
\[
\dbcat{\coh X}=\thick^{\odot}_{\dcat X}(F_*G)
\]
as a $\perf X$-module.
\end{theorem}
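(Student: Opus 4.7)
The plan is to reduce the problem to a statement about each stalk via the local-to-global principle of \cref{th:local-global}, and then invoke the cohomological support variety machinery recalled in \cref{ch:supportvarieties} to exploit the complete intersection hypothesis.

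First, the reduction. Fix $E \in \dbcat{\coh X}$. For any $x \in X$, the Koszul-twist $\res{E}{x}$ is a complex in $\dbcat{\rmod \mco_x}$ with finite-length cohomology, hence is finitely built from the residue field $k(x)$ in $\dcat{\mco_x}$. By \cref{th:local-global} it therefore suffices to show that $k(x) \in \thick_{\dcat{\mco_x}}((F_*G)_x)$ for each $x \in X$. Setting $R\colonequals \mco_x$, $k\colonequals k(x)$, and $M\colonequals G_x$, one has $(F_*G)_x = F_*M$, and the hypothesis $\supp_X G = X$ forces $\supp_R M = \spec R$ (stalks of $G$ at generizations of $x$ are localizations of $M$ at the corresponding primes, and all such localizations are nonzero by assumption). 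The problem is thus reduced to the following claim: for any $F$-finite local complete intersection ring $R$ with residue field $k$ and any $M \in \dbcat{\rmod R}$ of full support, $k$ belongs to $\thick_{\dcat R}(F_*M)$.

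Next, I would translate this into support varieties. The criterion of Liu and Pollitz recalled in \cref{ch:supportvarieties} reduces the claim above to verifying $\rmV_{R_\fp}(k_\fp) \subseteq \rmV_{R_\fp}((F_*M)_\fp)$ at every $\fp \in \spec R$. When $\fp \ne \fm$ this is vacuous since $k_\fp = 0$. At $\fp = \fm$ one has $\rmV_R(k) = \spec^*\mcS$, the full homogeneous variety of dimension $c \colonequals \codim R$, so the problem becomes the maximal-complexity statement
\[
\cx_R(F_*M) = c \qquad \text{for every $M \in \dbcat{\rmod R}$ with $\supp_R M = \spec R$.}
\]

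Proving this will be the heart of the argument and is where the complete intersection hypothesis is essential: the general nilpotence of \cref{th:nilpotence} would only handle $F^e_*M$ for $e > \log_p c$. The plan is to present $R = Q/(f_1,\ldots,f_c)$ with $Q$ a regular $F$-finite local ring, and to factor the Frobenius of $R$ as $R \to R' \to R$ with $R' \colonequals Q/(f_1^p,\ldots,f_c^p)$, exactly as in the proof of \cref{th:flevel_lci}. The ring $R'$ is finite free over $R$, and by \cite[Remark~2]{Avramov/Miller:2001} $R$ admits a filtration as an $R'$-module whose subquotients are all isomorphic to $F_*R$. Because the cohomology operators for $R$ are dual to $f_1,\ldots,f_c$ while those for $R'$ are dual to their $p$-th powers, tracking this filtration through the relevant $\Ext$ groups with target $k$ should yield that $\Ext_R^*(F_*R,k)$ has dimension $c$ over $\mcS$, i.e.\ $\rmV_R(F_*R) = \spec^*\mcS$. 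Extending this from $M=R$ to arbitrary $M$ of full support is the last step: the projection formula $F_*R \lotimes_R M \simeq F_*(F^*M)$ puts $F_*(F^*M)$ in the thick subcategory of $\dcat R$ generated by $F_*R$ under the $\perf R$-action, and combined with the tensor-product formula for varieties and the assumption $\supp_R M = \spec R$ this should deliver $\rmV_R(F_*M) = \spec^*\mcS$. I expect the main obstacle to lie in the precise comparison of cohomology operators under the Frobenius factorization, and in the final passage from $M=R$ to general $M$ of full support; the rest is bookkeeping with tools already developed in the paper.
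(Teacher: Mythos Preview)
Your overall architecture matches the paper's: reduce via the local-to-global principle (\cref{co:local-global}) to showing that $F_*M$ builds the residue field in $\dcat{R}$ for $R=\mco_x$ and $M=G_x$, then invoke the Liu--Pollitz criterion of \cref{ch:supportvarieties}. Your observation that only the maximal ideal needs checking (because $k_\fp=0$ for $\fp\ne\fm$) is correct and even slightly streamlines the paper's argument, which verifies the variety containment at every prime.

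The divergence, and the gap, is in how you establish $\cx_R(F_*M)=c$. The paper does not attempt to prove this; it simply quotes \cite[Theorem~12.2.4]{Avramov/Iyengar/Miller:2006}, which gives $\cx_{R_\fp}(F_*(M_\fp))=\operatorname{codim}R_\fp$ directly for any nonzero $M_\fp$. Your proposed reproof has two weak points. First, the Avramov--Miller filtration shows that $R'$, and hence $R$, lies in $\thick_{\dcat R}(F_*R)$; via support varieties this only yields $\rmV_R(R)\subseteq\rmV_R(F_*R)$, which is vacuous since $\rmV_R(R)=\{0\}$. Extracting $\cx_R(F_*R)=c$ genuinely requires comparing the cohomology operators across the factorization, which is exactly what \cite{Avramov/Iyengar/Miller:2006} does and is not a bookkeeping matter. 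Second, and more seriously, your passage from $M=R$ to general $M$ does not work as written: the projection formula gives $F_*R\lotimes_R M\simeq F_*(F^*M)$, which tells you about $F_*(F^*M)$, not about $F_*M$; there is no obvious route from $\rmV_R(F_*R)$ and $\supp_R M=\spec R$ to $\rmV_R(F_*M)$ along these lines. The fix is to cite the Avramov--Iyengar--Miller complexity theorem, which handles arbitrary $M$ at once and makes both difficulties disappear.
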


\begin{proof}
Fix $x\in X$, set $R\colonequals \mco_x$ and $M\colonequals G_x$. Given \cref{co:local-global}, it is suffices to verify the $R$-complex $F _*M$ generates $\dbcat{\rmod R}$.

Let $N$ be in $ \dbcat{\rmod R}$ and $\fp\in \spec R$. By \cite[Theorem~12.2.4]{Avramov/Iyengar/Miller:2006},
\[
\cx_{R_\fp}(F _*(M_\fp))=\cx_{R_\fp}(k(\fp)) =\mathrm{codim} R_\fp\,.
\]
In particular, $\rmV_{R_\fp}(N_\fp)\subseteq \rmV_{R_\fp}(F _*(M_\fp))$. Since $(F _*M)_\fp$ is isomorphic to $F _*(M_\fp)$ we obtain that 
\[
\rmV_{R_\fp}(N_\fp)\subseteq \rmV_{R_\fp}((F _*M)_\fp)\,.
\]
As the previous equality holds for each $\fp\in \spec R$ we can apply \cref{ch:supportvarieties} to conclude that $N$ is in $\thick_{\dcat R}(F _*M)$. 
\end{proof}

 The next result produces examples of rings $R$ which are not locally complete intersection, yet $F _*R$ generates $\dbcat{\rmod R}$.

\subsection*{Veronese subrings}
Let $k$ be an $F $-finite field of prime characteristic and let $S\colonequals k[x_1, \ldots, x_d]$, the polynomial ring in indeterminates $x_1,\dots,x_d$, each having degree one. The observation below is applied when $R$ is a Veronese subring of $S$.

\begin{lemma}
\label{le:S-generates}
Let $\iota\colon R\to S$ be a finite extension that is split as a map of $R$-modules.
Then $\dbcat{\rmod R}= \thick_{\dcat R}(\iota_*S)$. 
\end{lemma}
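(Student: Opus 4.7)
The plan is to reduce the assertion to showing that each finitely generated $R$-module lies in $\thick_{\dcat R}(\iota_*S)$, and then to combine the splitting $\pi\colon S\to R$ with the regularity of $S=k[x_1,\dots,x_d]$.

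First I would reduce to the module case. Every object of $\dbcat{\rmod R}$ is isomorphic to a bounded complex of finitely generated $R$-modules, and iterated stupid truncations place such a complex in the thick subcategory generated by its individual component modules, each placed in a single degree. Hence it suffices to prove $M\in\thick_{\dcat R}(\iota_*S)$ whenever $M$ is a single finitely generated $R$-module.

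Fix such an $M$ and set $N\colonequals S\otimes_R M$, which is finitely generated over $S$ because $\iota$ is finite. The unit map $M\to \iota_*N$, $m\mapsto 1\otimes m$, admits the $R$-linear retraction $\iota_*N\to M$ given by $s\otimes m\mapsto \pi(s)m$, so $M$ is a direct summand of the $R$-module $\iota_*N$. I deliberately use the ordinary (non-derived) tensor product here, since $L\iota^*M$ need not lie in $\dbcat{\rmod R}$ when $S$ is not flat over $R$; the splitting argument works perfectly well at the abelian level, which is all that is needed.

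Finally, since $S=k[x_1,\dots,x_d]$ is regular of finite Krull dimension, one has $\dbcat{\rmod S}=\perf S=\thick_{\dcat S}(S)$, so in particular $N\in\thick_{\dcat S}(S)$. The restriction functor $\iota_*\colon \dcat S\to\dcat R$ is exact and sends $S$ to $\iota_*S$, so it carries $\thick_{\dcat S}(S)$ into $\thick_{\dcat R}(\iota_*S)$. Hence $\iota_*N$, and therefore its direct summand $M$, lies in $\thick_{\dcat R}(\iota_*S)$, which completes the proof. The only subtle point in the whole argument is the decision to stay at the module level and avoid $L\iota^*$; once that is handled, the remaining steps are formal consequences of exactness of $\iota_*$ and regularity of $S$.
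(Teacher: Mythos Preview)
Your proof is correct and follows essentially the same approach as the paper: reduce to finitely generated modules, use the splitting to exhibit $M$ as a summand of $S\otimes_R M$, and invoke regularity of $S$ together with exactness of $\iota_*$. Your additional remarks about avoiding $L\iota^*$ and the explicit form of the retraction are helpful elaborations, but the argument is the same.
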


\begin{proof}
It suffices to show that any finitely generated $R$-module $M$ is in $\thick_{\dcat R}(S)$. Since $S$ is regular, $S \otimes_R M$ is in $\thick_{\dcat S}(S)$ and hence in $\thick_{\dcat R}(\iota_*S)$ by restriction of scalars along $\iota$. Since $\iota$ is split, $M$ is an $R$-module summand of $S \otimes_RM $. Thus $M$ is finitely built from $\iota _* S$ in $\dcat{R}$. 
\end{proof}

The minimal graded free resolution of the $S$-module $S/(x_1, \ldots, x_d)^j$ has the form:
\begin{equation}
\label{eq:linear-resolution}
\begin{tikzcd}
0 \rightarrow S^{\oplus b_d(j)}(-j-d+1)\rightarrow \cdots \rightarrow S^{\oplus b_1(j)}(-j) \rightarrow S^{\oplus b_0(j)} \rightarrow 0 \,.
\end{tikzcd}
\end{equation}
The resolution is linear after the first term. The integers $b_i(j)$ are given by
\[
b_i(j) =
\begin{cases}
1 & i=0\\
\frac{(j+d-1)!}{(j-1)!(d-i)!(i-1)!(j+i-1)} & i \leq d \\
0 & i\ge d+1\,.
\end{cases}
\]
See, for instance, \cite{Buchsbaum/Eisenbud:1975}. Fix an integer $\ell\ge 1$. For each integer $j$, let $G_j$ denote the $k$-vector space spanned by all monomials in $S$ whose total degree is congruent to $j$ modulo $\ell$. Note that each $G_j$ is a finitely generated $R$-module. 
Taking strands of \cref{eq:linear-resolution} yields exact sequences of $R$-modules 
\begin{equation}\label{eq:exact-sequence-strands}
 0 \rightarrow G_{\ell-d+1}^{\oplus b_d(j)} \rightarrow \cdots G_{l-1}^{\oplus b_2(j)}\rightarrow R^{\oplus b_1(j)} \rightarrow G_j \rightarrow 0
 \end{equation}
 for $1 \leq j \leq \ell -1$.

\begin{lemma}
\label{le:veronsese-generators}
Let $R$ be the $\ell$-th Veronese subring of $k[x,y]$. For each integer $1\le j\le \ell-1$ the following equality holds
 \[
 \dbcat{\rmod R}= \thick_{\dcat R} (R\oplus G_j)\,.
 \]
 \end{lemma}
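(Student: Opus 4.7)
The strategy is to combine \cref{le:S-generates} with a careful reading of the short exact sequences \eqref{eq:exact-sequence-strands} in the case $d=2$, using them in \emph{both} directions to transfer generation between $G_{\ell-1}$ and an arbitrary $G_j$.

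First, observe that the grading modulo $\ell$ gives an $R$-module direct sum decomposition
\[
S \;=\; R \,\oplus\, G_1 \,\oplus\, \cdots \,\oplus\, G_{\ell-1},
\]
so the inclusion $\iota\colon R\to S$ is split as a map of $R$-modules (and is finite since $S$ is a finitely generated $R$-module). Then \cref{le:S-generates} yields
\[
\dbcat{\rmod R} \;=\; \thick_{\dcat R}(\iota_*S) \;=\; \thick_{\dcat R}\!\bigl(R \oplus G_1 \oplus \cdots \oplus G_{\ell-1}\bigr).
\]
Consequently it suffices, for a fixed $1\le j\le \ell-1$, to show that each $G_i$ with $1\le i\le \ell-1$ lies in $\thick_{\dcat R}(R\oplus G_j)$.

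Second, specialize \eqref{eq:exact-sequence-strands} to $d=2$. The formulas for $b_i(j)$ give $b_1(i)=i+1$ and $b_2(i)=i$, so for each $1\le i\le \ell-1$ we obtain a short exact sequence of $R$-modules
\[
0 \longrightarrow G_{\ell-1}^{\oplus i} \longrightarrow R^{\oplus(i+1)} \longrightarrow G_i \longrightarrow 0.
\]
Regarded as a distinguished triangle in $\dcat R$, this yields two conclusions: (a) $G_i \in \thick_{\dcat R}(R\oplus G_{\ell-1})$, and, rotating the triangle, (b) $G_{\ell-1}^{\oplus i}\in \thick_{\dcat R}(R\oplus G_i)$, from which $G_{\ell-1}\in\thick_{\dcat R}(R\oplus G_i)$ since thick subcategories are closed under retracts.

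Third, chain the two conclusions. Applying (b) with $i=j$ gives $G_{\ell-1}\in \thick_{\dcat R}(R\oplus G_j)$, and then applying (a) for each $1\le i\le \ell-1$ yields
\[
G_i \;\in\; \thick_{\dcat R}(R\oplus G_{\ell-1}) \;\subseteq\; \thick_{\dcat R}(R\oplus G_j).
\]
Combined with the first step, this gives $\dbcat{\rmod R}=\thick_{\dcat R}(R\oplus G_j)$, as claimed.

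The argument is essentially formal once \eqref{eq:exact-sequence-strands} is available, and the only mildly delicate point is recognizing the two-sided use of the same sequence—reading the triangle forward to build $G_i$ from $R$ and $G_{\ell-1}$, and backward (using closure under retracts) to build $G_{\ell-1}$ from $R$ and $G_j$—which relies crucially on $d=2$ making the sequence have only three nonzero terms.
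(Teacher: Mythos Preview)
Your proof is correct and follows essentially the same approach as the paper. The paper's proof is terse---after the reduction via \cref{le:S-generates} and the decomposition $S\cong G_0\oplus\cdots\oplus G_{\ell-1}$, it simply says the remaining claim ``becomes clear by applying \eqref{eq:exact-sequence-strands} with $d=2$''---whereas you have spelled out explicitly the two-directional use of the short exact sequence $0\to G_{\ell-1}^{\oplus i}\to R^{\oplus(i+1)}\to G_i\to 0$ (first extracting $G_{\ell-1}$ from $R\oplus G_j$ via closure under retracts, then building each $G_i$ from $R\oplus G_{\ell-1}$).
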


\begin{proof}
Given \cref{le:S-generates} and the isomorphism $S \cong G_0\oplus \ldots \oplus G_{\ell-1}$
of $R$-modules, it suffices to verify that each $G_i$ is in $\thick_{\dcat R}(R\oplus G_j)$ for $0\le i\le \ell-1$. This becomes clear by applying \Cref{eq:exact-sequence-strands} with $d=2$.
\end{proof}

\begin{proposition}
\label{pr:veronese}
Let $k$ be an $F $-finite field. If $R$ is the $\ell$-th Veronese subring of $k[x,y]$, for some $\ell \ge 1$, then $F _*R$ is a strong generator for $\dbcat{\rmod R}$. 
\end{proposition}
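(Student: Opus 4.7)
The strategy is to apply Lemma~\ref{le:veronsese-generators}, which gives $\dbcat{\rmod R}=\thick_{\dcat R}(R\oplus G_j)$ for every $1\le j\le \ell-1$; thus it suffices to exhibit both $R$ and some such $G_j$ as direct summands of $F_*R$. The case $\ell=1$ is immediate since $R=k[x,y]$ is then regular, so assume $\ell\ge 2$. As a first step, I would compute $F_*S$ as an $R$-module. The explicit basis $\{c\,x^ay^b : c\in B,\ 0\le a,b<p\}$ (for any $k^p$-basis $B$ of $k$) exhibits $F^S_*S$ as a free $S$-module of some positive rank $r$. Restricting scalars along $R\hookrightarrow S$ and invoking the $R$-module decomposition $S=\bigoplus_{i=0}^{\ell-1} G_i$ yields
\[
F_*S \cong \bigoplus_{i=0}^{\ell-1} G_i^{\,r}
\]
as graded $R$-modules. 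Since $R=G_0$ is an $R$-module summand of $S$, the functor $F_*$ makes $F_*R$ an $R$-module summand of $F_*S$.

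By Krull--Schmidt for finitely generated graded modules over the connected positively graded ring $R$ (whose indecomposable summands $G_i(-n)$ have the local graded endomorphism ring $k$), this summand decomposes as $F_*R\cong \bigoplus_{i=0}^{\ell-1} G_i^{\,a_i}$ ungradedly, for some integers $a_i\ge 0$. The multiplicity $a_0\ge 1$ because $R$ is $F$-split: composing any $S$-linear Frobenius splitting $F^S_*S\to S$ (available since $S$ is regular) with the $R$-linear projection $S\to G_0=R$ yields an $R$-linear splitting $F_*R\to R$. Moreover, $a_j\ge 1$ for at least one $j\in\{1,\dots,\ell-1\}$, for otherwise $F_*R$ would be $R$-free, which by Kunz's theorem would force $R$ to be regular, contradicting $\ell\ge 2$. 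Consequently $R\oplus G_j$ is a direct summand of $F_*R$ for some $j\in\{1,\dots,\ell-1\}$, and Lemma~\ref{le:veronsese-generators} yields $\thick_{\dcat R}(F_*R)\supseteq\thick_{\dcat R}(R\oplus G_j)=\dbcat{\rmod R}$. Since $R$ is essentially of finite type over the $F$-finite field $k$, Aoki's theorem furnishes a strong generator for $\dbcat{\rmod R}$, so any generator---in particular $F_*R$---is a strong generator.

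The main obstacle is verifying the explicit $R$-module decomposition $F_*S\cong\bigoplus G_i^{\,r}$ and the graded Krull--Schmidt step that shows any summand of $F_*S$ is itself a direct sum of copies of the $G_i$; once these are in hand, the remaining ingredients ($F$-splitness of $R$ via $S$, and Kunz's theorem to rule out freeness) are routine.
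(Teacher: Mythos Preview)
Your argument is correct and follows the same skeleton as the paper's proof: reduce via Lemma~\ref{le:veronsese-generators} to showing that $R$ and some $G_j$ with $1\le j\le \ell-1$ occur as summands of $F_*R$, obtain $R$ from $F$-splitness, and rule out $F_*R\cong R^{a_0}$ via Kunz's theorem. The one substantive difference is how the decomposition $F_*R\cong\bigoplus_i G_i^{a_i}$ is justified. The paper cites the theory of conic modules over toric rings (namely, that $F_*R$ decomposes into conic modules and that every conic $R$-module is one of the $G_i$), whereas you embed $F_*R$ as a summand of $\iota_*F^S_*S\cong\bigoplus_i G_i^{\,r}$ and invoke graded Krull--Schmidt. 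Your route is more self-contained and avoids the external references, at the cost of having to check that $F_*R$ carries a compatible grading (it does, via the $\tfrac{1}{p}\mathbb{Z}$-grading) and that the $G_i$ are indecomposable with local graded endomorphism rings (they are rank-one reflexives over a normal domain, so $\operatorname{End}_R(G_i)\cong R$). The paper's route packages these verifications inside the cited results on conic modules.
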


\begin{proof}
 By \cref{le:veronsese-generators} it suffices to verify that $\thick_{\dcat R}(F _*R)$ contains $R$ and at least one $G_j$ where $1 \leq j \leq \ell-1$. The first containment is satisfied as the Frobenius map $R \rightarrow F _*R$ splits as an $R$-module, as $R$ is a direct summand of a regular ring.
 
 To show some $G_j$ is in $\thick_{\dcat R}(F _*R)$, we import results from the theory of conic modules over a toric ring; see, for example, \cite{BrunsGubeladze,Faber/Muller/Smith:2019}. Namely, the module $F _*R$ is isomorphic to a direct sum of conic modules; see \cite[Proposition~4.15]{Faber/Muller/Smith:2019} and \cite[Theorem~3.1]{BrunsConic}. On the other hand, any conic $R$-module is isomorphic to some $G_i$ by \cite[Remark~7.6]{Faber/Muller/Smith:2019}. Therefore since $G_0\cong R$ and $R$ is not regular, $F _*R$ must have a summand isomorphic to $G_j$ for some $1\leq j\leq \ell -1.$
\end{proof}

\begin{chunk}
Let $S= k[x,y]$ and $R$ be the $\ell$-th Veronese subring of $S$. For $\ell\geq 3$, the ring $R$ is not a complete intersection ring. Moreover, if $\ell>p^2[k:k^p]$, then $F _*R$ cannot have $S$ as a free $R$-module summand; indeed, the rank of $S$ and $F _*R$ as $R$-modules are $\ell$ and $p^2[k:k^p]$ respectively, cf.\@ \Cref{le:S-generates}.
\end{chunk}

We end with a few observations regarding \cref{qu:first-push-forward}.

\begin{chunk}
\label{ch:qu-observations}
By the local-to-global principle from \cite[Corollary~3.4]{Letz:2021}, to settle \cref{qu:first-push-forward} in the affine setting, it suffices to check that when $R$ is local $F_*R$ generates the residue field of $R$. When $R$ is also Cohen-Macaulay, one can reduce to $R$ artinian.

Suppose $R$ is an artinian local ring. \cref{qu:first-push-forward} has a positive answer when $k$ is a direct summand of $F _*R$. This property holds if and only if the maximal ideal $\fm$ of $R$ satisfies $(0:\fm^{[p]})\not\subseteq \fm^{[p]}$. For example, if $R=\mathbb{F}_2[x,y]/(x^4,x^2y^2,y^4)$ it is not immediately clear whether $F _*R$ generates $\dbcat{\rmod R}$ since $k$ is not a summand of $F_*R$; in this example, 
\[
F _*R\cong (R/(x^2,xy,y^2))^{\oplus 3}\,.
\]
However a direct analysis shows that even for this ring $F_*R$ generates $\dbcat{\rmod R}.$
\end{chunk}

\noindent \textbf{Conflicts of Interest:} None.

\noindent \textbf{Financial Support:} This material is based on work supported by Simons Foundation: Collaboration Grant for Mathematicians \#708132 (Ballard) and Grant \#586594 (Lank) and work supported by the National Science Foundation under Grants No. DMS-1840190 (Pollitz), DMS-FRG-1952399, DMS-2101075 (Mukho-padhyay), DMS-2001368 (Iyengar), DMS-2002173 (Pollitz), DMS-2302567 (Pollitz), DMS-2302263 (Ballard). This was also funded by ERC Advanced Grant 101095900-TriCatApp (Lank). Part of it was done at the Hausdorff Research Institute for Mathematics, Bonn, when Iyengar and Pollitz were at the ``Spectral Methods in Algebra, Geometry, and Topology" trimester, funded by the Deutsche Forschungsgemeinschaft under Germany's Excellence Strategy--EXC-2047/1--390685813. Additionally, Ballard, Iyengar, and Lank were supported by the National Science Foundation under Grant No. DMS-1928930 while the authors
were in residence at the Simons Laufer Mathematical Sciences Institute (formerly MSRI).

\bibliographystyle{amsplain}
\bibliography{refs}

\providecommand{\bysame}{\leavevmode\hbox to3em{\hrulefill}\thinspace}
\providecommand{\MR}{\relax\ifhmode\unskip\space\fi MR }
\providecommand{\MRhref}[2]{%
  \href{http://www.ams.org/mathscinet-getitem?mr=#1}{#2}
}
\providecommand{\href}[2]{#2}
\begin{thebibliography}{100}

\bibitem{Achinger:2012}
Piotr Achinger, \emph{Frobenius push-forwards on quadrics}, Comm. Algebra \textbf{40} (2012), no.~8, 2732--2748. \MR{2968908}

\bibitem{Tarrio/Lopez/Solario:2004}
Leovigildo Alonso~Tarr\'{\i}o, Ana Jerem\'{\i}as~L\'{o}pez, and Mar\'{\i}a~Jos\'{e} Souto~Salorio, \emph{Bousfield localization on formal schemes}, J. Algebra \textbf{278} (2004), no.~2, 585--610. \MR{2071654}

\bibitem{Aoki:2021}
Ko~Aoki, \emph{Quasiexcellence implies strong generation}, J. Reine Angew. Math. \textbf{780} (2021), 133--138.

\bibitem{Atiyah:1957}
M.~F. Atiyah, \emph{Vector bundles over an elliptic curve}, Proc. London Math. Soc. (3) \textbf{7} (1957), 414--452. \MR{131423}

\bibitem{Avramov:1989}
Luchezar~L. Avramov, \emph{Modules of finite virtual projective dimension}, Invent. Math. \textbf{96} (1989), no.~1, 71--101. \MR{981738}

\bibitem{Avramov/Buchweitz:2000b}
Luchezar~L. Avramov and Ragnar-Olaf Buchweitz, \emph{Support varieties and cohomology over complete intersections}, Invent. Math. \textbf{142} (2000), no.~2, 285--318. \MR{1794064}

\bibitem{Avramov/Buchweitz/Iyengar/Miller:2010}
Luchezar~L. Avramov, Ragnar-Olaf Buchweitz, Srikanth~B. Iyengar, and Claudia Miller, \emph{Homology of perfect complexes}, Adv. Math. \textbf{223} (2010), no.~5, 1731--1781. \MR{2592508}

\bibitem{Avramov/Hochster/Iyengar/Yao:2012}
Luchezar~L. Avramov, Melvin Hochster, Srikanth~B. Iyengar, and Yongwei Yao, \emph{Homological invariants of modules over contracting endomorphisms}, Math. Ann. \textbf{353} (2012), no.~2, 275--291. \MR{2915536}

\bibitem{Avramov/Iyengar/Miller:2006}
Luchezar~L. Avramov, Srikanth Iyengar, and Claudia Miller, \emph{Homology over local homomorphisms}, Amer. J. Math. \textbf{128} (2006), no.~1, 23--90. \MR{2197067}

\bibitem{Avramov/Iyengar/Lipman:2010}
Luchezar~L. Avramov, Srikanth~B. Iyengar, and Joseph Lipman, \emph{Reflexivity and rigidity for complexes. {I}. {C}ommutative rings}, Algebra Number Theory \textbf{4} (2010), 47--86.

\bibitem{Avramov/Miller:2001}
Luchezar~L. Avramov and Claudia Miller, \emph{Frobenius powers of complete intersections}, Math. Res. Lett. \textbf{8} (2001), no.~1-2, 225--232. \MR{1825272}

\bibitem{Ballard/Duncan/McFaddin:2019}
Matthew~R. Ballard, Alexander Duncan, and Patrick~K. McFaddin, \emph{The toric {F}robenius morphism and a conjecture of {O}rlov}, Eur. J. Math. \textbf{5} (2019), no.~3, 640--645. \MR{3993255}

\bibitem{Bass/Murthy:1967}
Hyman Bass and M.~Pavaman Murthy, \emph{Grothendieck groups and {P}icard groups of abelian group rings}, Ann. of Math. (2) \textbf{86} (1967), 16--73. \MR{219592}

\bibitem{Benson/Iyengar/Krause:2011a}
Dave Benson, Srikanth~B. Iyengar, and Henning Krause, \emph{Stratifying triangulated categories}, J. Topol. \textbf{4} (2011), no.~3, 641--666. \MR{2832572}

\bibitem{beil_collection}
I.~N. Bern\v{s}te\u{\i}n, I.~M. Gel'fand, and S.~I. Gel'fand, \emph{Algebraic vector bundles on {${\bf P}^{n}$} and problems of linear algebra}, Funktsional. Anal. i Prilozhen. \textbf{12} (1978), no.~3, 66--67. \MR{509387}

\bibitem{Berthelot/Gorthendieck/Illusie:1971}
Pierre Berthelot, Alexander Grothendieck, and Luc Illusie (eds.), \emph{Th{\'e}orie des intersections et th{\'e}or{\`e}me de {R}iemann-{R}och}, Lecture Notes in Mathematics, Vol. 225, Springer-Verlag, Berlin-New York, 1971, S{\'e}minaire de G{\'e}om{\'e}trie Alg{\'e}brique du {B}ois-{M}arie 1966--1967 ({SGA} 6), Dirig{\'e} par P. Berthelot, A. Grothendieck et L. Illusie. Avec la collaboration de D. Ferrand, J. P. Jouanolou, O. Jussila, S. Kleiman, M. Raynaud et J. P. Serre. \MR{0354655}

\bibitem{Bezrukavnikov/Mirkovic/Rumynin:2008}
Roman Bezrukavnikov, Ivan Mirkovi\'c, and Dmitriy Rumynin, \emph{Localization of modules for a semisimple {L}ie algebra in prime characteristic}, Ann. of Math. (2) \textbf{167} (2008), no.~3, 945--991, With an appendix by Bezrukavnikov and Simon Riche. \MR{2415389}

\bibitem{Bhatt/Scholze:2017a}
Bhargav Bhatt and Peter Scholze, \emph{Projectivity of the witt vector affine grassmannian}, Invent. Math. \textbf{209} (2017), 329--423.

\bibitem{semistable_curves}
Indranil Biswas, Georg Hein, and Norbert Hoffmann, \emph{On semistable vector bundles over curves}, C. R., Math., Acad. Sci. Paris \textbf{346} (2008), no.~17-18, 981--984 (English).

\bibitem{Bondal:Fthick}
Alexey Bondal, \emph{Derived categories of toric varieties}, Oberwolfach Reports \textbf{3} (2006), 284--286.

\bibitem{Bondal/VanDenBergh:2003}
Alexey~I. Bondal and Michel van~den Bergh, \emph{Generators and representability of functors in commutative and noncommutative geometry}, Mosc. Math. J. \textbf{3} (2003), no.~1, 1--36, 258. \MR{1996800}

\bibitem{BrunsConic}
Winfried Bruns, \emph{Conic divisor classes over a normal monoid algebra}, Commutative algebra and algebraic geometry, Contemp. Math., vol. 390, Amer. Math. Soc., Providence, RI, 2005, pp.~63--71. \MR{2187324}

\bibitem{BrunsGubeladze}
Winfried Bruns and Joseph Gubeladze, \emph{Divisorial linear algebra of normal semigroup rings}, Algebr. Represent. Theory \textbf{6} (2003), no.~2, 139--168. \MR{1977927}

\bibitem{Bruns/Herzog:1998}
Winfried Bruns and J{\"u}rgen Herzog, \emph{{C}ohen-{M}acaulay rings}, revised ed., Cambridge Studies in Advanced Mathematics, vol.~39, Cambridge University Press, Cambridge, 1998. \MR{1251956}

\bibitem{Buchsbaum/Eisenbud:1975}
David~A. Buchsbaum and David Eisenbud, \emph{Generic free resolutions and a family of generically perfect ideals}, Advances in Math. \textbf{18} (1975), no.~3, 245--301. \MR{396528}

\bibitem{StacksProject}
Aise~Johan de~Jong~et al., \emph{Stacks project}, available at \\ \url{https://www.math.columbia.edu/~dejong/wordpress/?p=866}.

\bibitem{Dwyer/Greenlees/Iyengar:2006a}
William~G. Dwyer, John P.~C. Greenlees, and Srikanth~B. Iyengar, \emph{Duality in algebra and topology}, Adv. Math. \textbf{200} (2006), no.~2, 357--402. \MR{2200850}

\bibitem{Dwyer/Greenlees/Iyengar:2006b}
\bysame, \emph{Finiteness in derived categories of local rings}, Comment. Math. Helv. \textbf{81} (2006), no.~2, 383--432. \MR{2225632}

\bibitem{Efimov:2014}
Alexander~I. Efimov, \emph{Maximal lengths of exceptional collections of line bundles}, J. Lond. Math. Soc. (2) \textbf{90} (2014), no.~2, 350--372. \MR{3263955}

\bibitem{Eisenbud:1980}
David Eisenbud, \emph{Homological algebra on a complete intersection, with an application to group representations}, Trans. Amer. Math. Soc. \textbf{260} (1980), no.~1, 35--64. \MR{570778}

\bibitem{Ejiri/Sannai:2016}
Sho Ejiri and Akiyoshi Sannai, \emph{A characterization of ordinary abelian varieties by the {F}robenius push-forward of the structure sheaf {II}}, Int. Math. Res. Not. IMRN (2019), no.~19, 5975--5988. \MR{4016889}

\bibitem{Faber/Muller/Smith:2019}
Eleonore Faber, Greg Muller, and Karen~E. Smith, \emph{Non-commutative resolutions of toric varieties}, Adv. Math. \textbf{351} (2019), 236--274. \MR{3951483}

\bibitem{FH:toric}
David Favero and Jesse Huang, \emph{Rouquier dimension is {K}rull dimension for normal toric varieties}, Eur. J. Math. \textbf{9} (2023), no.~4, Paper No. 91, 13. \MR{4649429}

\bibitem{Fedder:1983}
Richard Fedder, \emph{{$F$}-purity and rational singularity}, Trans. Amer. Math. Soc. \textbf{278} (1983), no.~2, 461--480. \MR{701505}

\bibitem{Foxby:1979}
Hans-Bj{\o}rn Foxby, \emph{Bounded complexes of flat modules}, J. Pure Appl. Algebra \textbf{15} (1979), no.~2, 149--172. \MR{535182}

\bibitem{Gabber:2004}
Ofer Gabber, \emph{Notes on some {$t$}-structures}, Geometric aspects of {D}work theory. {V}ol. {I}, {II}, Walter de Gruyter, Berlin, 2004, pp.~711--734. \MR{2099084}

\bibitem{Gille/Szamuely:2017}
Philippe Gille and Tam\'{a}s Szamuely, \emph{Central simple algebras and {G}alois cohomology}, Cambridge Studies in Advanced Mathematics, vol. 165, Cambridge University Press, Cambridge, 2017, Second edition of [ MR2266528]. \MR{3727161}

\bibitem{Glassbrenner:1996}
Donna Glassbrenner, \emph{Strong {$F$}-regularity in images of regular rings}, Proc. Amer. Math. Soc. \textbf{124} (1996), no.~2, 345--353. \MR{1291770}

\bibitem{Gulliksen:1974}
Tor~H. Gulliksen, \emph{A change of ring theorem with applications to {P}oincar{\'e} series and intersection multiplicity}, Math. Scand. \textbf{34} (1974), 167--183. \MR{364232}

\bibitem{HHL:toric}
Andrew Hanlon, Jeff Hicks, and Oleg Lazarev, \emph{Resolutions of toric subvarieties by line bundles and applications}, arXiv e-prints, 2023, available at \url{https://arxiv.org/abs/2303.03763}, pp.~1--62.

\bibitem{Hara:2015}
Nobuo Hara, \emph{Looking out for {F}robenius summands on a blown-up surface of {$\mathbb{P}^2$}}, Illinois J. Math. \textbf{59} (2015), no.~1, 115--142. \MR{3459631}

\bibitem{Herzog:1974}
J\"{u}rgen Herzog, \emph{Ringe der {C}harakteristik {$p$} und {F}robeniusfunktoren}, Math. Z. \textbf{140} (1974), 67--78. \MR{352081}

\bibitem{PerlingHille:2006}
Lutz Hille and Markus Perling, \emph{A counterexample to {K}ing's conjecture}, Compos. Math. \textbf{142} (2006), no.~6, 1507--1521. \MR{2278758}

\bibitem{Hochster:1973}
M.~Hochster, \emph{Non-openness of loci in {N}oetherian rings}, Duke Math. J. \textbf{40} (1973), 215--219. \MR{311653}

\bibitem{Hopkins:1987}
Michael~J. Hopkins, \emph{Global methods in homotopy theory}, Homotopy theory ({D}urham, 1985), London Math. Soc. Lecture Note Ser., vol. 117, Cambridge Univ. Press, Cambridge, 1987, pp.~73--96. \MR{932260}

\bibitem{Hovey/Palmieri/Strickland:1997}
Mark Hovey, John~H. Palmieri, and Neil~P. Strickland, \emph{Axiomatic stable homotopy theory}, Mem. Amer. Math. Soc. \textbf{128} (1997), no.~610, x+114. \MR{1388895}

\bibitem{Huybrechts:2006}
D.~Huybrechts, \emph{Fourier-{M}ukai transforms in algebraic geometry}, Oxford Mathematical Monographs, The Clarendon Press, Oxford University Press, Oxford, 2006. \MR{2244106}

\bibitem{Iyengar/Lipman/Neeman:2015}
Srikanth~B. Iyengar, Joseph Lipman, and Amnon Neeman, \emph{Relation between two twisted inverse image pseudofunctors in duality theory}, Compos. Math. \textbf{151} (2015), no.~4, 735--764. \MR{3334894}

\bibitem{Iyengar/Takahashi:2016}
Srikanth~B. Iyengar and Ryo Takahashi, \emph{Annihilation of cohomology and strong generation of module categories}, Int. Math. Res. Not. IMRN (2016), 499--535.

\bibitem{kaneda2}
Masaharu Kaneda, \emph{Another strongly exceptional collection of coherent sheaves on a {G}rassmannian}, J. Algebra \textbf{473} (2017), 352--373. \MR{3591154}

\bibitem{Keller/VanDenBergh:2009}
Bernhard Keller, Daniel Murfet, and Michel~Van den Bergh, \emph{On two examples by iyama and yoshino}, 2009.

\bibitem{Koh/Lee:1998}
Jee Koh and Kisuk Lee, \emph{Some restrictions on the maps in minimal resolutions}, J. Algebra \textbf{202} (1998), no.~2, 671--689. \MR{1617663}

\bibitem{Krause:2005}
Henning Krause, \emph{The stable derived category of a {N}oetherian scheme}, Compos. Math. \textbf{141} (2005), no.~5, 1128--1162. \MR{2157133}

\bibitem{Krause:2022}
\bysame, \emph{Homological theory of representations}, Cambridge Studies in Advanced Mathematics, Cambridge University Press, 2022.

\bibitem{Kunz:1969}
Ernst Kunz, \emph{Characterizations of regular local rings of characteristic {$p$}}, Amer. J. Math. \textbf{91} (1969), 772--784. \MR{252389}

\bibitem{Kunz:1976}
\bysame, \emph{On noetherian rings of characteristic $p$}, Amer. J. Math. \textbf{98} (1976), 999--1013. \MR{432625}

\bibitem{Kuznsetov:CubicFourFold}
Alexander Kuznetsov, \emph{Derived categories of cubic fourfolds}, Cohomological and geometric approaches to rationality problems, Progr. Math., vol. 282, Birkh\"{a}user Boston, Boston, MA, 2010, pp.~219--243. \MR{2605171}

\bibitem{LangePauly:2008}
Herbert Lange and Christian Pauly, \emph{On {Frobenius}-destabilized rank-2 vector bundles over curves}, Comment. Math. Helv. \textbf{83} (2008), no.~1, 179--209 (English).

\bibitem{Langer:2008}
Adrian Langer, \emph{{$D$}-affinity and {F}robenius morphism on quadrics}, Int. Math. Res. Not. IMRN (2008), no.~1, Art. ID rnm 145, 26. \MR{2417792}

\bibitem{Lech:1964}
Christer Lech, \emph{Inequalities related to certain couples of local rings}, Acta Math. \textbf{112} (1964), 69--89. \MR{161876}

\bibitem{Letz:2021}
Janina~C. Letz, \emph{Local to global principles for generation time over commutative noetherian rings}, Homology Homotopy Appl. \textbf{23} (2021), no.~2, 165--182. \MR{4259573}

\bibitem{Liu/Pollitz:2021}
Jian Liu and Josh Pollitz, \emph{Duality and symmetry of complexity over complete intersections via exterior homology}, Proc. Amer. Math. Soc. \textbf{149} (2021), no.~2, 619--631. \MR{4198070}

\bibitem{Lunts:2010}
Valery~A. Lunts, \emph{Categorical resolution of singularities}, J. Algebra \textbf{323} (2010), no.~10, 2977--3003. \MR{2609187}

\bibitem{kaneda1}
Kaneda Masaharu, \emph{On the {F}robenius direct image of the structure sheaf of a homogeneous projective variety}, J. Algebra \textbf{512} (2018), 160--188. \MR{3841522}

\bibitem{Mathew:2018}
Akhil Mathew, \emph{Examples of descent up to nilpotence}, Geometric and topological aspects of the representation theory of finite groups, Springer, 2018, pp.~269--311. \MR{3901164}

\bibitem{Matsumura:1989}
Hideyuki Matsumura, \emph{Commutative ring theory}, second ed., Cambridge Studies in Advanced Mathematics, vol.~8, Cambridge University Press, Cambridge, 1989, Translated from the Japanese by M. Reid. \MR{1011461}

\bibitem{Mateusz:2011}
Mateusz Micha\l~ek, \emph{Family of counterexamples to {K}ing's conjecture}, C. R. Math. Acad. Sci. Paris \textbf{349} (2011), no.~1-2, 67--69. \MR{2755699}

\bibitem{Murfet:2007}
Daniel Murfet, \emph{The mock homotopy category of projectives and grothendieck duality}, Ph.D. thesis, Australian National University, 2007, available from \url{http://www.therisingsea.org/thesis.pdf}, p.~145.

\bibitem{Neeman:1992b}
Amnon Neeman, \emph{The chromatic tower for {$D(R)$}}, Topology \textbf{31} (1992), no.~3, 519--532, With an appendix by Marcel B{\"o}kstedt. \MR{1174255}

\bibitem{Neeman:1992a}
\bysame, \emph{The connection between the {$K$}-theory localization theorem of {T}homason, {T}robaugh and {Y}ao and the smashing subcategories of {B}ousfield and {R}avenel}, Ann. Sci. \'{E}cole Norm. Sup. (4) \textbf{25} (1992), no.~5, 547--566. \MR{1191736}

\bibitem{Neeman:2010}
\bysame, \emph{Dualizing complexes---the modern way}, Cycles, motives and {S}himura varieties, Tata Inst. Fund. Res. Stud. Math., vol.~21, Tata Inst. Fund. Res., Mumbai, 2010, pp.~419--447. \MR{2906031}

\bibitem{Neeman:2021}
\bysame, \emph{Strong generators in {$\boldsymbol{D}^{\mathrm{perf}}(X)$} and {$\boldsymbol{D}^b_{\mathrm{coh}}(X)$}}, Ann. of Math. (2) \textbf{193} (2021), 689--732.

\bibitem{Ohkawa/Uehara:2013}
Ryo Ohkawa and Hokuto Uehara, \emph{Frobenius morphisms and derived categories on two dimensional toric {D}eligne-{M}umford stacks}, Adv. Math. \textbf{244} (2013), 241--267. \MR{3077872}

\bibitem{Oppermann/Stovicek:2012}
Steffen Oppermann and Jan \v{S}t'ov\'{\i}\v{c}ek, \emph{Generating the bounded derived category and perfect ghosts}, Bull. Lond. Math. Soc. \textbf{44} (2012), no.~2, 285--298. \MR{2914607}

\bibitem{Orlov:1993}
D.~O. Orlov, \emph{Projective bundles, monoidal transformations, and derived categories of coherent sheaves}, Izv. Ross. Akad. Nauk Ser. Mat. \textbf{56} (1992), no.~4, 852--862. \MR{1208153}

\bibitem{orlov_derivd_cats_sing}
Dmitri Orlov, \emph{Derived categories of coherent sheaves and triangulated categories of singularities}, Algebra, arithmetic, and geometry. In honor of Y. I. Manin on the occasion of his 70th birthday. Vol. II, Boston, MA: Birkh{\"a}user, 2009, pp.~503--531 (English).

\bibitem{Orlov:2009}
\bysame, \emph{Remarks on generators and dimensions of triangulated categories}, Mosc. Math. J. \textbf{9} (2009), no.~1, 153--159, back matter. \MR{2567400}

\bibitem{Pirashvili:2007}
Teimuraz Pirashvili, \emph{Hochschild homology, {F}robenius homomorphism and {M}ac {L}ane homology}, Algebr. Geom. Topol. \textbf{7} (2007), 1071--1079.

\bibitem{Quillen:1970}
Daniel Quillen, \emph{On the (co-) homology of commutative rings}, Applications of {C}ategorical {A}lgebra ({P}roc. {S}ympos. {P}ure {M}ath., {V}ol. {XVII}, {N}ew {Y}ork, 1968), Amer. Math. Soc., Providence, R.I., 1970, pp.~65--87. \MR{0257068}

\bibitem{Quillen:1967}
Daniel~G. Quillen, \emph{Homotopical algebra.}, Springer-Verlag, Berlin-New York, 1967.

\bibitem{Raedschelders/Spenko/VandenBergh:2019}
Theo Raedschelders, \v{S}pela \v{S}penko, and Michel Van~den Bergh, \emph{The {F}robenius morphism in invariant theory}, Adv. Math. \textbf{348} (2019), 183--254. \MR{3925931}

\bibitem{Rao:1999}
A.~P. Rao, \emph{Mathematical instantons in characteristic two}, Compositio Math. \textbf{119} (1999), no.~2, 169--184. \MR{1723127}

\bibitem{Rodicio:1988}
Antonio~G. Rodicio, \emph{On a result of {A}vramov}, Manuscripta Math. \textbf{62} (1988), no.~2, 181--185. \MR{963004}

\bibitem{Rouquier:2008}
Rapha\"el Rouquier, \emph{Dimensions of triangulated categories}, J. K-Theory \textbf{1} (2008), 193--256.

\bibitem{tilting_frob}
Alexander Samokhin, \emph{Tilting bundles via the {F}robenius morphism}, arXiv e-prints, 2009, available at \url{https://arxiv.org/abs/0904.1235}, pp.~1--37.

\bibitem{frob_flag_1}
\bysame, \emph{The {F}robenius morphism on flag varieties, {I}}, arXiv e-prints, 2014, available at \url{https://arxiv.org/abs/arXiv:1410.3742}, pp.~1--27.

\bibitem{Schoutens:2013}
Hans Schoutens, \emph{Dimension and singularity theory for local rings of finite embedding dimension}, J. Algebra \textbf{386} (2013), 1--60. \MR{3049575}

\bibitem{Schwede/Shipley:2003}
Stefan Schwede and Brooke Shipley, \emph{Equivalences of monoidal model categories}, Algebr. Geom. Topol. \textbf{3} (2003), 287--334.

\bibitem{Karen1997SC}
Karen Smith, \emph{Vanishing theorems, singularities, and prime characteristic local algebra}, Proc. Sympos. Pure Math. (Providence, RI), 62, Amer. Math. Soc., 1997, p.~289–325.

\bibitem{Smith:2000}
Karen~E. Smith, \emph{Globally {F}-regular varieties: {Applications} to vanishing theorems for quotients of {Fano} varieties}, Mich. Math. J. \textbf{48} (2000), 553--572 (English).

\bibitem{Karen2000Vanishing}
\bysame, \emph{Globally {F}-regular varieties: applications to vanishing theorems for quotients of {F}ano varieties}, Michigan Math. J. \textbf{48} (2000), 553--572, Dedicated to William Fulton on the occasion of his 60th birthday. \MR{1786505}

\bibitem{Sosna:2013}
Pawel Sosna, \emph{Scalar extensions of triangulated categories}, Appl. Categ. Structures \textbf{22} (2014), no.~1, 211--227. \MR{3163514}

\bibitem{Stevenson:2013a}
Greg Stevenson, \emph{Support theory via actions of tensor triangulated categories}, J. Reine Angew. Math. \textbf{681} (2013), 219--254. \MR{3181496}

\bibitem{Stevenson:2014b}
\bysame, \emph{Duality for bounded derived categories of complete intersections}, Bull. Lond. Math. Soc. \textbf{46} (2014), no.~2, 245--257. \MR{3194744}

\bibitem{Stevenson:2014a}
\bysame, \emph{Subcategories of singularity categories via tensor actions}, Compos. Math. \textbf{150} (2014), no.~2, 229--272. \MR{3177268}

\bibitem{Stevenson:2018a}
\bysame, \emph{A tour of support theory for triangulated categories through tensor triangular geometry}, Building bridges between algebra and topology, Adv. Courses Math. CRM Barcelona, Birkh\"{a}user/Springer, Cham, 2018, pp.~63--101. \MR{3793858}

\bibitem{Sun:2008}
Xiaotao Sun, \emph{Frobenius morphism and semi-stable bundles}, Algebraic geometry in {E}ast {A}sia---{S}eoul 2008, Adv. Stud. Pure Math., vol.~60, Math. Soc. Japan, Tokyo, 2010, pp.~161--182. \MR{2732093}

\bibitem{Takahashi/Yoshino:2004}
Ryo Takahashi and Yuji Yoshino, \emph{Characterizing {C}ohen-{M}acaulay local rings by {F}robenius maps}, Proc. Amer. Math. Soc. \textbf{132} (2004), no.~11, 3177--3187. \MR{2073291}

\bibitem{Thomason:1997}
R.~W. Thomason, \emph{The classification of triangulated subcategories}, Compositio Math. \textbf{105} (1997), no.~1, 1--27. \MR{1436741}

\bibitem{Toen:2010}
Bertrand To\"{e}n, \emph{Simplicial presheaves and derived algebraic geometry}, Simplicial methods for operads and algebraic geometry, Adv. Courses Math. CRM Barcelona, Birkh\"{a}user/Springer Basel AG, Basel, 2010, pp.~119--186. \MR{2778590}

\bibitem{Coupek/Stovicek:2020}
Pavel \v{C}oupek and Jan \v{S}\v{t}ov\'{\i}\v{c}ek, \emph{Cotilting sheaves on {N}oetherian schemes}, Math. Z. \textbf{296} (2020), no.~1-2, 275--312. \MR{4140742}

\end{thebibliography}

\end{document}